\begin{document}
 \title{Semilinear Eigenvalue problem: Parametric Analyticity and the Uncertainty Quantification\thanks{Received date, and accepted date (The correct dates will be entered by the editor).}}


          \author{Byeong-Ho Bahn\thanks{University of Massachusetts Amherst, (bban@umass.edu).}
         }

         \pagestyle{myheadings} \markboth{Parametric semilinear eigenvalue problem}{Byeong-Ho Bahn} \maketitle

          \begin{abstract}
In this paper, to the best of our knowledge, we make the first attempt at studying the parametric semilinear elliptic eigenvalue problems with the parametric coefficient and some power-type nonlinearities. The parametric coefficient is assumed to have an affine dependence on the countably many parameters with an appropriate class of sequences of functions. In this paper, we obtain the upper bound estimation for the mixed derivatives of the ground eigenpairs that has the same form obtained recently for the linear eigenvalue problem. The three most essential ingredients for this estimation are the parametric analyticity of the ground eigenpairs, the uniform boundedness of the ground eigenvalue, and the uniform positive gap between ground eigenvalues of related linear operators. All these three ingredients need new techniques and a careful investigation of the nonlinear eigenvalue problem that will be presented in this paper. As an application, considering each parameter as a uniformly distributed random variable, we estimate the expectation of the eigenpairs using a randomly shifted quasi-Monte Carlo lattice rule and show the dimension-independent error bound. 
\end{abstract}
\begin{keywords} Semilinear elliptic partial differential equation, Eigenvalue problem, quasi-Monte Carlo methods, Parametric partial differential equation, Gross-Pitaevskii equation
\end{keywords}

 \begin{AMS} 65N35; 65D30;  35A23
\end{AMS}

\section{Introduction}\label{intro}
           
\subsection{Main problem.} 
\label{main problem}
 In this paper, we are interested in the smallest eigenvalue, the corresponding eigenfunction, and the corresponding energy  of the parametric semilinear elliptic eigenvalue problem:
\begin{align}
\begin{cases}
\mathcal{S}(u)(x,\bm{y})=\lambda(\bm{y})u(x,\bm{y}), &(x,\bm{y})\in \Omega \times U,\\
u(x,\bm{y})=0, &(x,\bm{y})\in \partial \Omega \times U,
\end{cases} \label{gpe}
\end{align}
where
\begin{align}
\mathcal{S}(u)(x,\bm{y})=-\nabla\cdot\left(a(x)\nabla\right) u(x,\bm{y}) + b(x,\bm{y}) u(x,\bm{y}) +\eta|u(x,\bm{y})|^{p-1}u(x,\bm{y}).
\end{align}
In this problem we assume that $\Omega\subset \mathbb{R}^d$ ($d=1,2,3$) is a bounded open domain with $C^2$ boundary, and $\eta>0$ is a constant. The set $U=\left[-\frac{1}{2},\frac{1}{2}  \right]^\mathbb{N}$ is where our parameters come from. With $(b_i)_{i=1}^\infty\subset L^\infty(\Omega)$, we define $b(x,\bm{y})=b_0(x)+\sum_{i=1}^{\infty} y_i b_i(x)$ which has an affine dependence on $\bm{y}=(y_j)_{j\geq 1}\in U$. Here, we assume that {$a\in C^1(\Omega)$} and there exist $a_{min}>0$ such that $a(x)\geq a_{min}$. We further assume that $b(x,\bm{y})\geq 0$ for all $(x,\bm{y})\in \Omega\times U$, and $(\|b_j\|_{L^\infty})_{j\geq 1}\in \ell^1(\mathbb{N})$. Our choice of $p$ would depend on the dimension $d$ affected by the Sobolev embedding. Specifically, we will assume that $(d,p)\in \mathcal{A}$ where $\mathcal{A}$ is defined in \eqref{dp}.

\subsection{Motivation and the goal.}
\label{goal} For the last few decades, uncertainty quantification (UQ) of various parametric PDE problems has been actively studied. An incomplete list of such works would be \cite{para1,para2,para3,para4,para5, para6, para7, para8, para9, sch1, alexey, alex2019, alex20201,andreev2010, kien2022, semi2013} and the references therein. In this paper, we will focus on elliptic PDEs, especially elliptic semilinear eigenvalue problems. Parametric PDEs have broad applications, for example, to physics, biology, engineering, etc. Its parameters can be seen in two different ways, deterministic and stochastic.  From a deterministic viewpoint, we can think our problem as tuning the parameters $(y_i)_{i\geq 1}$ which is equivalent to tuning the influences of factors $(b_i)_{i\geq 1}$. In this way, it is possible to study the behavior of the solution as we tune the coefficient functions $(b_i)_{i\geq 1}$. For example, we can think of studying a super conductor in which possibly many different kinds of tunable {potentials} are available to influence on the conductor. In this case, we can tune the influences of each potential in order to study the behaviors of the critical temperature in different combinations of potentials. From a stochastic viewpoint, when many factors are randomly affecting our solution, the potential function $b(\bm{y})$ in our problem can be considered as a random variable. The motivation of choosing the affine dependence form of our parameters is from Karhunen-Lo\'eve (KL) expansion and the detailed explanation of this motivation can be found in the section 2 of \cite{schbest}. We note that, by Kosambi-Karhunen-Lo\'eve theorem, any square integrable centered stochastic process has a KL expansion. For the detail of KL expansion, see \cite{kar}.

 Among various parametric PDEs, UQ of parametric elliptic PDE problems has been studied extensively in various ways. Mentioning a few such studies, we can find the studies about the Quasi-Monte Carlo method in \cite{alex2019,alex20201, sch1,kien2022,para5, para6, para8}, best N-term approximations in  \cite{schbest}, Deep neural network in \cite{zech,mishra}, and the references therein. A good introduction to affine dependence parametric PDEs can be found in \cite{Cohen2015}. Also, semilinear elliptic PDEs have been studied in \cite{sch4}.

 In studying parametric PDEs, we are primarily interested in simultaneously solving a class of PDEs.  Thus, the study of its Taylor generalized polynomial chaos (gpc) is crucial, and the precise estimation of the mixed derivatives or, equivalently, Taylor coefficients provides us the information on the convergence. For example, one of the recent results shows that the convergence of the sparse gpc Taylor expansion of the solution in $\ell^r$ with $r\in (0,1)$ provides us deep neural network approximation (Theorem 4.9 in \cite{zech}).

 Considering its ubiquity and importance in physics and engineering, the parametric elliptic eigenvalue problem has been only recently studied, and there is still room for improvement. Since the study of \cite{andreev2010} in 2010, there have been a few studies, for example, in  \cite{alex2019, alex20201, alexey, kien2022}. In \cite{alex2019, alex20201} with all the coefficients parametrized, the form of  $C(|\bm{\nu}|!)^{1+\varepsilon}\bm{\beta}^{\bm{\nu}}$ is obtained as the upper bound for $|\partial^{\bm{\nu}}\lambda(\bm{y})|$ and $\|\partial^{\bm{\nu}}u(\bm{y})\|_{H_0^1}$ for some constants  $C>0$ and an arbitrary $\varepsilon\in (0,1)$. Although it is not enough for convergence of the Taylor gpc expansion, \cite{alex2019} showed that this result is enough for showing dimension-independent error bound for estimating the expectation of the solution under the assumption that the sequence $\bm{\beta}\in \ell^r$ with $r\in (0,1)$. The only obstacle generated by the existence of $\varepsilon$ was the analysis of the case of $r=1$.   Recently, \cite{alexey} obtained an improved estimation. Specifically, they used a falling factorial technique to remove $\varepsilon>0$ from the estimation by \cite{alex2019} and extended the class of parametrizations.

We note that, for the parametric linear and nonlinear elliptic PDEs, \cite{breaking,semi2013} showed that the two PDEs have the same form of upper bound estimations for the mixed derivatives. Following the works mentioned above, we study nonlinear eigenvalue problems with power type nonlinearities comparing with the result of linear eigenvalue problems in \cite{alexey}. Also, this problem has its own interest in physics. For example, a famous example of our class of equations is the Gross-Pitaevskii equation (when $p=3$ and $a=1$) which is known to describe superfluidity and superconductivity. Our setting can be seen as describing such phenomena with random potential. Further explanation of this equation can be found in \cite{gpe,gpe2} and an incomplete list of the related studies is \cite{lieb2000, weizhu1, weizhu2}.  
\subsection{The summary of our contribution.} \label{summary}
 In this paper, to the best of our knowledge, we make the first attempt at UQ analysis of the parametric nonlinear eigenvalue problem. The analyticity of the eigenpairs with respect to the parameters is necessary for estimating the mixed derivatives. The studies of the linear eigenvalue problems so far, in \cite{alex2019, alexey, kien2022}, strongly depend on the analyticity result by \cite{andreev2010} which is deeply rooted in the linear operator perturbation theory by Kato in \cite{kato}. However, the linear theory by Kato cannot be directly applied to our nonlinear problems. Thus, in this paper, we use the technique using the implicit function theorem introduced in \cite{nonlinear}. After the careful analysis of \eqref{gpe}, we apply the implicit function theorem technique to show the analyticity of the eigenpairs. This analyticity allows us to take arbitrarily high order derivatives of the eigenpair and allows us to apply the method of mathematical induction used in \cite{alex2019, alexey} to estimate the upper bound for the mixed derivatives. Although we use a similar argument, it involves different techniques that use the properties of the nonlinear eigenvalue, also shown in this paper. One of the key ingredients of our analysis is to show that the smallest eigenvalue is uniformly bounded away from that of a different linear elliptic operator. And, by using recently found technique in \cite{alexey}, we show that the norms of each mixed derivative, $|\partial^{\bm{\nu}}\lambda(\bm{y})|$, $\left\|\nabla \partial^{\bm{\nu}}u(\bm{y})   \right\|_{L^2}$ and $|\partial^{\bm{\nu}}\mathcal{E}(\bm{y})|$ are bounded above by the same form with \cite{alexey}.
 
 With the bounds, considering the parameters as uniformly distributed random variables, we obtain an identical QMC error convergence rate for approximating $\mathbb{E}_{\bm{y}}[\lambda(\bm{y})]$ and $\mathbb{E}_{\bm{y}}[\mathcal{G}(u)(\bm{y})]$ for a given $\mathcal{G}\in H^{-1}$ with that of the parametric linear eigenvalue problem studied in \cite{alex2019}. Also, as we removed $\varepsilon$ from the estimation of the bound, we also have an additional case for $(\|b_i\|_{L^\infty})_{i\in \mathbb{N}}\in \ell^1$. In this process, we point out that any analytic function with such an upper bound on the mixed derivatives has the same QMC error convergence rate.

\subsection{Organization.}
\label{organization}  In section \ref{preliminary}, we investigate the properties of the nonparametric eigenpair that will be used in our parametric analysis. In section \ref{main section}, using the result from section \ref{preliminary}, we develop some parametric tools such as analyticity of the eigenpair and uniform differences of eigenvalues. We use them to estimate the bound for the mixed derivatives of the ground eigenpairs and the energy. In section \ref{application}, we discuss UQ analysis for approximating the expectation of the functions of our interest using the Quasi-Monte Carlo method.

\subsection{Notation.}
\label{notation} Throughout this paper, we are mostly interested in the dependence on the stochastic parameter $\bm{y}$ rather than the spatial variable $x$. Thus, we drop the dependence of $x$ for convenience and readability. In other words, we use $u(\bm{y})$ instead of $u(x,\bm{y})$. When the dependence on $\bm{y}$ is clear from the context without any confusion, we sometimes drop $\bm{y}$. Also, unless otherwise stated, we drop $\Omega$. For example, we use $L^2$, $\int u$ and $H_0^1$ instead of $L^2(\Omega)$, $\int_\Omega u(x) dx$ and $H_0^1(\Omega)$. Here, $L^r$ for $r\geq 1$ and $H_0^1$ is the space of Lebesgue measurable functions with finite $\|\cdot\|_{L^r}$ and $\|\cdot\|_{H_0^1}$ norm respectively. Those norms are defined by:
\begin{align}
\|u\|_{L^r}:=
\begin{cases}
\left( \int_{\Omega} |u|^r \right)^{\frac{1}{r}}, &r<\infty,\\
\underset{x\in \Omega}{\mathrm{esssup}} |u(x)|, &r=\infty.
\end{cases}
\end{align} 
\begin{align}
\|u\|_{H_0^1}:=\|\nabla u\|_{L^2}.
\end{align}
The inner product for $L^2$ is notated by $\left<\cdot,\cdot \right>$ which is clearly defined by $\left<f,g 
\right>=\int fg$. The set $C^k$ in this paper is the space of $k$-times continuously differentiable functions.
  We use boldface letters for elements in $\mathbb{R}^\mathbb{N}$. Especially, we use $\bm{y}=(y_1,y_2,\dots)\in U$ for stochastic parameter and $\bm{\nu}=(\nu_1,\nu_2,\dots)$ for multi-index. We notate $|\bm{\nu}|=\sum_{i\geq 1} \nu$.  Let $\mathcal{F}=\{\bm{\nu}\in \mathbb{N}^\mathbb{N}: |\bm{\nu}|<\infty \}$. Here we used $\mathbb{N}$ as the set of natural numbers including $0$.  For multi-index power or derivative, for any $\bm{\nu}\in \mathcal{F}$, for any analytic function $f:U\rightarrow \mathbb{C}$ and $\bm{y}\in U$, we denote 
   \begin{align}
   \partial^{\bm{\nu}} f(\bm{y})=\frac{\partial^{|\bm{\nu}|}f\ }{\partial y_1^{\nu_1}\partial y_2^{\nu_2}\cdots }(\bm{y}),
   \end{align}
   and
\begin{align}
\bm{y}^{\bm{\nu}}=\prod_{i\geq 1}y_i^{\nu_i}.
\end{align}   
 As for multi-index combination notation, we use 
 \begin{align}
 \begin{pmatrix}
 \bm{\nu}\\\bm{m}
 \end{pmatrix}=\prod_{j\geq 1} 
 \begin{pmatrix}
 \nu_j\\m_j
 \end{pmatrix}.
 \end{align}
In the case when there are more than three multi-indexes involved, we use
\begin{align}
\begin{pmatrix}
&\bm{\nu}\\
\bm{m}_{1} &\cdots &\bm{m}_p
\end{pmatrix}
=
\prod_{j\geq 1}
\begin{pmatrix}
&\nu_j\\
m_{1j} &\cdots &m_{pj}
\end{pmatrix}:=\prod_{j\geq 1}
\frac{\nu_j!}{m_{1j}!m_{2j}!\cdots m_{pj}!}.
\end{align} 
Throughout this paper, we will use the notation 
\begin{align}
\sum_{\substack{\sum_{j=1}^{p} \bm{m}_j=\bm{\nu} \\ 
0\neq \bm{m}_j<\bm{\nu}}}:=\sum_{\substack{\sum_{j=1}^{p} \bm{m}_j=\bm{\nu} \\ 
0\neq \bm{m}_j<\bm{\nu},\text{for all } j}},
\end{align}
for the summations over many parameters. In other words, we will drop `for all $j$' for our convenience.   
As for the basic operations of multi-indexes, we define $\bm{\nu}+\bm{m}=(\nu_i+m_i)_{i\geq 1}$ and $a\bm{\nu}=(a\nu_i)_{i\geq 1}$ for any constant $a\in \mathbb{R}$. Also,  we say $\bm{\nu}\leq \bm{m}$ if and only if $\nu_i\leq m_i$ for all $i\geq 1$. The relation $<$ is defined accordingly. Lastly, for a given positive integer $i$, we use $\bm{e}_i$ as a multi-index whose entries are all zero except the $i$-th entry being $1$.
\section{Preliminaries}\label{preliminary}

In this section, we investigate some properties of the ground state of the nonparametric problem that will be used in the next section. In order for this, we consider the following problem:
\begin{align}
\label{semi1}
\begin{cases}
-\nabla\cdot(a\nabla)u+ bu+\eta |u|^{p-1}u=\lambda u, &\text{ on }\Omega,\\
u=0, &\text{ on }\partial \Omega,
\end{cases}
\end{align}
where all the notations and assumptions are consistent with  \eqref{gpe}. To be specific, our analysis in this section does not depend on the parameter, $\bm{y}$, but we still stick to the assumption that $a\in C^1$ and $ b\in L^\infty(\Omega)$. Also, we assume that there exists a constant $a_{min}>0$ such that $a(x)\geq a_{min}$, and that $b(x)\geq 0$ for all $x\in \Omega$. Also, we further assume that $\Omega$ has $C^2$ boundary. Throughout this section, whenever $(u,\lambda)$ is defined to be the solution to \eqref{semi1}, we assume all the relevant assumptions as well. 

Throughout this paper, the Sobolev embedding $H_0^1\hookrightarrow L^q$ with $q$ determined by the power $p$ in the nonlinearity plays an important part. For example, it is necessary to allow the nonlinearity $|u|^{p-1}u$ to be in $L^2$. Thus, we define two collections of admissible pairs of the dimensions and the powers for our convenience as follows:
\begin{align}
\label{dp}
\mathcal{A}&:=\left\{ (d,p)\in \mathbb{N}\times \mathbb{N}: (d,p)\in [1,2]\times [1,\infty)\text{ or } p\in \left[1, \frac{d}{d-2}  \right] \right\},
\\
\label{dp2}
\mathcal{A}'&:=\left\{ (d,p)\in \mathbb{N}\times \mathbb{N}: (d,p)\in [1,2]\times [1,\infty)\text{ or } p\in \left[1, \frac{2}{d-2}  \right] \right\}.
\end{align}
Note that $\mathcal{A}'\subset \mathcal{A}$. We will use $\mathcal{A}$ for the ground eigenpairs in Theorem \ref{main} and will use $\mathcal{A}'$ for the ground energy in Corollary \ref{esen}. The constants $\frac{d}{d-2}$ and $\frac{2}{d-2}$ above comes from $2p\leq \frac{2d}{d-2}$, $2(p+1)\leq \frac{2d}{d-2}$ and the fact that $L^q\hookrightarrow H_0^1$ for $q\in \left[2,\frac{2d}{d-2} \right]$ when $d\geq 3$.

 Note that when $p=1$, the problem is linear, so the existence and uniqueness of the solution are well known by the Lax-Milgram theorem. Thus, we assume without loss of generality that $p\geq 2$. Before stating the main result of this section, let us define the ground state of the problem \eqref{semi1}. We first let $H=\left\{ v\in H_0^1 : \|v\|_{L^2}=1 \right\}$ and then define a functional $E:H_0^1\rightarrow \mathbb{R}$ by
 \begin{align}
 \label{energy}
 E[u]:=\int a|\nabla u|^2 +\int b|u|^2 +\frac{2\eta}{p+1}\int |u|^{p+1}.
 \end{align}
Then we can define the ground energy $\mathcal{E}$ by
\begin{align}
\label{genergy}
\mathcal{E}:=\inf_{u\in H} E[u].
\end{align}
The minimizer $u$ of \eqref{genergy} is the ground state and it solves \eqref{semi1} with corresponding eigenvalue $\lambda=\mathcal{E}+\eta \frac{p-1}{p+1}\int |u|^{p+1}$ (Lemma \ref{sol}). This can be seen by the calculus of variation techniques (for example, see chapter 11 in \cite{lieb1997} or \cite{lieb2000}). We call the pair $(u,\lambda)$, the ground eigenpair, and we call $\mathcal{E}$ the ground (or minimal) energy. 

 Note that due to the modulus in \eqref{energy}, the minimizer is unique up to the modulus $|\cdot|$ (Lemma \ref{M}). In other words, if $u$ is a solution, then $e^{i\theta}u$ is also a solution for any $\theta\in \mathbb{R}$ which implies the non-uniqueness of the solution. This obstacle can be avoided by noticing that $|u|$ is a unique non-negative real solution (Corollary \ref{absol}) and focusing only on this solution for the future arguments. It allows us to work on the analytic formation of our PDE, which is \eqref{semi}. All the mentioned results and all the tools we need for the following sections are collected in the following theorem, whose proof is provided in Appendix \ref{appendixa}.
 \\
 
\begin{theorem}
\label{exiuni}
There exists a ground eigenpair $(u,\lambda)$ of \eqref{semi1} where $u$ is unique up to the modulus  $|\cdot|$. Moreover, if $(d,p)\in \mathcal{A}$, the unique non-negative real-valued solution $|u|$ is indeed strictly positive in $\Omega$. 
 \end{theorem}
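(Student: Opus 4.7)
The plan is to combine the direct method in the calculus of variations for existence, a strict convexity argument after changing to the density variable $\rho = u^2$ for uniqueness of the non-negative minimizer, and the strong maximum principle for strict positivity.

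First I would show existence of a minimizer of \eqref{genergy}. Given a minimizing sequence $(u_n) \subset H$, the coercivity $E[u_n] \geq a_{min}\|\nabla u_n\|_{L^2}^2$ (using $a \geq a_{min} > 0$ and $b \geq 0$) bounds $(u_n)$ in $H_0^1$, so, passing to a subsequence, $u_n \rightharpoonup u^*$ in $H_0^1$. Because $(d,p) \in \mathcal{A}$ forces $p+1 < 2d/(d-2)$ when $d\geq 3$, and imposes no restriction for $d \leq 2$, the Rellich--Kondrachov theorem gives strong convergence in $L^2$ and $L^{p+1}$. Hence $\|u^*\|_{L^2}=1$, and weak lower semicontinuity of $u \mapsto \int a|\nabla u|^2$ together with continuity of the lower-order terms under strong convergence yields $E[u^*] = \mathcal{E}$. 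Taking constrained variations along $v \in H_0^1$ produces the Euler--Lagrange equation \eqref{semi1} with a Lagrange multiplier $\lambda$, and testing this equation against $u^*$ identifies $\lambda = \mathcal{E} + \eta \tfrac{p-1}{p+1} \int |u^*|^{p+1}$.

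For uniqueness up to a modulus I would first reduce to the non-negative real case via the Kato-type pointwise inequality $|\nabla u|^2 \geq |\nabla |u||^2$ a.e., which gives $E[|u|] \leq E[u]$ with equality iff the phase of $u$ is locally constant on $\{|u|>0\}$. To pin down the non-negative minimizer, I would reparametrize by the density $\rho = u^2 \geq 0$, so that on $H$ the functional reads
\[
\tilde E[\rho] \;=\; \int a\,|\nabla\sqrt{\rho}|^2 \;+\; \int b\,\rho \;+\; \frac{2\eta}{p+1}\int \rho^{(p+1)/2}.
\]
The first summand is convex in $\rho$ via the identity $|\nabla\sqrt\rho|^2 = |\nabla\rho|^2/(4\rho)$ and the joint convexity of $(s,t)\mapsto s^2/t$ on $\mathbb{R}\times(0,\infty)$; the second is linear; the third is strictly convex since $t\mapsto t^{(p+1)/2}$ is strictly convex for $p\geq 2$. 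Under the linear constraint $\int \rho = 1$ this strict convexity forces a unique minimizing density $\rho^*$, whence $u^* = \sqrt{\rho^*}$ is the unique non-negative minimizer.

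Finally, strict positivity of $|u|$ follows from the strong maximum principle once the nonlinearity is absorbed into a bounded coefficient. A standard Brezis--Kato / Moser iteration (with the exponent bookkeeping enabled by $(d,p)\in\mathcal{A}$) yields $|u|\in L^\infty$, so that $c(x) := b(x) + \eta|u(x)|^{p-1} - \lambda$ is bounded. Rewriting the PDE as the linear equation $-\nabla\cdot(a\nabla |u|) + c\,|u| = 0$, the $C^2$ regularity of $\partial\Omega$ together with non-negativity and non-triviality ($\||u|\|_{L^2}=1$) of $|u|$ allow the strong maximum principle (Gilbarg--Trudinger) to conclude $|u|>0$ throughout $\Omega$, which retroactively upgrades the phase constancy in the Kato step to a single global modulus factor. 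I expect the main obstacle to be the uniqueness step: the convexity of $\rho\mapsto\int a|\nabla\sqrt\rho|^2$ requires care near $\{\rho=0\}$, and one must verify that admissible convex combinations exist in the correct function space and that the strictness of the nonlinear term actually transfers through the full constrained functional to pin down a unique $\rho^*$.
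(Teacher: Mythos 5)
Your proposal is correct and follows essentially the same route as the paper's Appendix~\ref{appendixa}: direct method for existence, strict convexity of $E$ in the density variable $\rho=|u|^2$ (what the paper calls $F(w)=E(\sqrt w)$, Lemma~\ref{SC}--\ref{M}) for uniqueness up to modulus, and a maximum-principle argument for strict positivity (Lemma~\ref{sp}). The only place you diverge in any substantive way is the $L^\infty$ bound: you propose Brezis--Kato/Moser iteration, whereas the paper first establishes $u\in H^2\cap C(\overline\Omega)$ by linear elliptic regularity (Lemma~\ref{regul}) and then shows boundedness by a short maximum-principle argument on the set $\Lambda=\{\lambda-\eta|u|^{p-1}<0\}$, concluding $u\leq(\lambda/\eta)^{1/(p-1)}$ directly and bypassing any iteration; the paper also uses Harnack's inequality rather than the strong maximum principle for strict positivity, though both deliver the same conclusion. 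Your flag that the convexity of $\rho\mapsto\int a|\nabla\sqrt\rho|^2$ needs care near $\{\rho=0\}$ and that admissible convex combinations must remain in $H$ is exactly the point the paper addresses in Lemma~\ref{SC} via the Cauchy--Schwarz computation and the verification $\|\sqrt{\alpha w_1+(1-\alpha)w_2}\|_{L^2}=1$, so the gap you anticipate is indeed closable by the paper's argument.
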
 
 \bigskip
 
 Significantly, the result of the strict positivity of the solution stated in the Theorem \ref{exiuni} (or Lemma \ref{sp}) will be a useful tool in many ways especially when proving the uniform positive gap between two operators $\mathcal{O}$ and $\mathcal{T}$ defined in the next section. By Theorem \ref{exiuni}, we can also say that \eqref{semi} has a unique real positive solution that is also a solution to \eqref{gpe}. From now on, we will only consider the strictly positive real solution to \eqref{gpe}, and it will allow us to remove the modulus sign in the nonlinear term of \eqref{gpe} and it transforms it to the analytic problem \eqref{semi}.

\section{Parametric semilinear eigenvalue problem}\label{main section}

By the result of the previous section, instead of \eqref{gpe}, we can focus on the ground state of the following analytic form of parametric semilinear elliptic eigenvalue problem :
 \begin{align}
 \label{semi}
 \begin{cases}
 -\nabla\cdot(a(x)\nabla )u(x,\bm{y})+{b(x,\bm{y})}u({x},\bm{y})+\eta u({x},\bm{y})^p=\lambda u({x},\bm{y}), &({x},\bm{y})\in \Omega\times U,\\
u({x},\bm{y}) =0, &({x},\bm{y})\in \partial \Omega \times U,
 \end{cases}
 \end{align}
where we assume the following: 

\begin{assumption}
\label{assumption} 

\begin{itemize}
\item $\Omega\subset \mathbb{R}^d$ is bounded open domain with $C^2$ boundary,

\item The diffusion coefficient $a$ is a $C^1$ function and there exists constants $a_{min}>0$ such that $a(x)\geq a_{min}$ for all $x\in \Omega$,

\item $b(\bm{y})$ has the following form:
 \begin{align}
b(\bm{y})=b_0+\sum_{j\geq 1}y_j b_j,
 \end{align}

where $ (b_j)_{j\geq 0}\subset L^\infty$ with $(\|b_j\|_{L^\infty})_{j\geq 0}\in \ell^1(\mathbb{N})$ such that and $b(\bm{y})\geq 0$ for all $(x,\bm{y})\in \Omega\times U$,

\item The constant $\eta$ is positive real number, and $U=\left[ -\frac{1}{2},\frac{1}{2} \right]^{\mathbb{N}}$ is the parameter space,

\item The pair $(d,p)$ belongs to $\mathcal{A}$.
\end{itemize}

\end{assumption}
\bigskip 

We can easily see that the third assumption implies  $b(\bm{y})\in L^\infty $ for all $\bm{y}\in U$. From now, whenever we say $(u(\bm{y}),\lambda(\bm{y}))$ is the ground state of problem \eqref{semi}, Assumption \ref{assumption} will be automatically assumed as well. The main goal of this section is to estimate the mixed derivatives of the ground eigenpair. For this goal, we start this section by showing that the ground eigenpair admits the mixed derivatives.

\subsection{Parametric analyticity of the ground state.}In this subsection, we will verify the analyticity of the ground state, $(\lambda(\bm{y}),u(\bm{y}))$, with respect to the parameters and will investigate their properties. In order to estimate the mixed derivatives of the ground eigenpair, the analyticity should be justified first. Without loss of generality, we assume $p\geq 2$.  From now on, we denote $\lambda(\bm{y})$ to be the smallest eigenvalue and $u(\bm{y})$ to be the corresponding strictly positive eigenfunction to \eqref{semi} under the Assumption \ref{assumption}. For convenience, with the ground state $u(\bm{y})$ and under the Assumption \ref{assumption},  we use the following notations for different but related linear operators:
\begin{align}
\label{o}
\mathcal{O}(\bm{y})&:=-\nabla \cdot (a\nabla)+b(\bm{y})+\eta u(\bm{y})^{p-1},
\\
\label{t}
{\mathcal{T}}(\bm{y})&:=-\nabla \cdot (a\nabla)+b(\bm{y})+p\eta u(\bm{y})^{p-1}.
\end{align}
We use notation $u_T(\bm{y})$ and $\lambda_T(\bm{y})$ to denote the ground eigenpair of $\mathcal{T}(\bm{y})$. We first show the following useful lemma:
\\

\begin{lemma}
\label{strictpositive}
For any $\bm{y}\in U$, if $(u(\bm{y}),\lambda(\bm{y}))$ is the ground eigenpair of \eqref{semi}, then it is the ground eigenpair of the operator $\mathcal{O}(\bm{y})$. 
\end{lemma}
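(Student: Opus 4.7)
The plan is to reduce the claim to the classical characterization of the principal eigenvalue of a linear self-adjoint elliptic operator. First, since the strictly positive ground state $u(\bm{y})$ solves \eqref{semi}, I rewrite the nonlinear term as $\eta u(\bm{y})^{p} = \bigl(\eta u(\bm{y})^{p-1}\bigr) u(\bm{y})$, so that the equation becomes $\mathcal{O}(\bm{y}) u(\bm{y}) = \lambda(\bm{y}) u(\bm{y})$ identically. This already exhibits $\lambda(\bm{y})$ as \emph{some} eigenvalue of $\mathcal{O}(\bm{y})$ with eigenfunction $u(\bm{y})$; the work is to show that it is the smallest one.

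Next I would check that $\mathcal{O}(\bm{y})$ fits the standard Schrödinger-operator framework. The potential $b(\bm{y}) + \eta u(\bm{y})^{p-1}$ is non-negative (by Assumption \ref{assumption} and since $u(\bm{y}) \geq 0$ from Theorem \ref{exiuni}) and belongs to $L^\infty(\Omega)$: the bound on $u(\bm{y})$ follows from elliptic regularity for \eqref{semi} together with the Sobolev embeddings encoded by $(d,p)\in\mathcal{A}$ and the $C^2$ regularity of $\partial\Omega$. Hence $\mathcal{O}(\bm{y})$ is a self-adjoint operator on $L^2(\Omega)$ with form domain $H_0^1$, with compact resolvent, and with a discrete spectrum bounded below.

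For such an operator the classical principal-eigenvalue theory (via Krein--Rutman, or equivalently via a Rayleigh-quotient/orthogonality argument) says the smallest eigenvalue is simple and is the unique eigenvalue admitting a strictly positive eigenfunction; equivalently, any eigenfunction that does not change sign must correspond to the smallest eigenvalue, since any other eigenfunction is $L^2$-orthogonal to it and therefore cannot be sign-definite. Since Theorem \ref{exiuni} gives $u(\bm{y}) > 0$ in $\Omega$, it must be a scalar multiple of the principal eigenfunction of $\mathcal{O}(\bm{y})$, and $\lambda(\bm{y})$ is forced to be the ground eigenvalue.

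I do not expect a serious obstacle: the only real point of care is verifying that the effective potential $b(\bm{y}) + \eta u(\bm{y})^{p-1}$ is genuinely in $L^\infty$ so that the classical self-adjoint theory applies, which is ensured by the $(d,p)\in\mathcal{A}$ condition and the boundary regularity assumed throughout.
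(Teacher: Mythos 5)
Your proposal is correct and follows essentially the same route as the paper: both observe that $u(\bm{y})$ is trivially an eigenfunction of $\mathcal{O}(\bm{y})$ and then use the strict positivity of the principal eigenfunction of the linear operator (the paper cites Theorem 1.2.5 in \cite{henrot}, together with the boundedness of $u$ from Lemma \ref{sp}) plus $L^2$-orthogonality of eigenfunctions to force $\lambda(\bm{y})$ to be the smallest eigenvalue. Your explicit check that the effective potential $b(\bm{y})+\eta u(\bm{y})^{p-1}$ lies in $L^\infty$ is a nice touch but amounts to the same ingredient the paper invokes implicitly.
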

\begin{proof}
Let $\bm{y}\in U$ be given. It is clear that $(u(\bm{y}),\lambda(\bm{y}))$ is an eigenpair of the linear operator $\mathcal{O}(\bm{y})$. The pair $(u(\bm{y}),\lambda(\bm{y}))$ is indeed the ground eigenpair of $\mathcal{O}(\bm{y})$. If not, there is a pair $(v(\bm{y}),\mu(\bm{y}))\neq (u(\bm{y}),\lambda(\bm{y}))$ that is the ground eigenpair of $\mathcal{O}(\bm{y})$. Then $v(\bm{y})$ is strictly positive over $\Omega$, for example, by Theorem 1.2.5 in \cite{henrot} with the boundedness of $u$ proved in \ref{sp}. Now, note that, by the orthogonality of eigenfunctions for $\mathcal{O}(\bm{y})$, we have $\left< u(\bm{y}),v(\bm{y}) \right>=0$. However, by the strict positivity of $v(\bm{y})$ and $u(\bm{y})$, the equality, $\left<u(\bm{y}),v(\bm{y}) \right>=\int u(\bm{y})v(\bm{y})=0$ is impossible which shows a contradiction. Therefore, $(u(\bm{y}),\lambda(\bm{y}))$ is the ground eigenpair of the elliptic operator $\mathcal{O}(\bm{y})$.
 
\end{proof}
\bigskip

Using a part of the argument above, we can see that $u_T(\bm{y})$ is also strictly positive. With this fact, we can show the analyticity of the ground state as follows.\\

\begin{definition} 
Let $B$ be a given Banach space and let $O\subset \mathbb{C}^{\mathbb{N}}$ be a domain.
 A function $f: O\rightarrow  B$  is called\textit{ separately complex analytic} if $f$ is analytic for each coordinate.
\end{definition}
\bigskip

\begin{theorem}
\label{anal}
Suppose that  $(u(\bm{y}),\lambda(\bm{y}))$ is the ground eigenpair of \eqref{semi} with parametrized coefficient functions $a(\bm{y})$ and $b(\bm{y})$ which are analytic in $\bm{y}$. Then the  ground eigenpair  $(u(\bm{y}),\lambda(\bm{y}))$ is separately complex analytic at each $\bm{y}\in U$, i.e, for each coordinate $y_i$, there exists an open set $O_i\subset \mathbb{C}$ containing $y_i$ such that $(u(y_i),\lambda(y_i))$ is complex analytic on $O_i$ with other coordinates fixed. Furthermore,  for any $M\subset \mathbb{N}$ with $|M|<\infty$, the ground state is jointly complex analytic on $\prod_{i\in M}O_i$.
 \end{theorem}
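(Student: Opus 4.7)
The plan is to deduce the theorem from a Banach-space complex-analytic implicit function theorem (IFT), along the lines of the method in \cite{nonlinear}. Fix a base point $\bm{y}^*\in U$ and write $(u^*,\lambda^*):=(u(\bm{y}^*),\lambda(\bm{y}^*))$ for the strictly positive ground eigenpair supplied by Theorem \ref{exiuni}. For a fixed coordinate $i\in\mathbb{N}$, the goal is to construct an open set $O_i\subset\mathbb{C}$ with $y_i^*\in O_i$ and a holomorphic branch $z\mapsto(u(z),\lambda(z))$ on $O_i$ (with the remaining coordinates frozen at $y_j^*$) that agrees with the ground eigenpair for real $z\in O_i$.

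Set $X:=H^2(\Omega)\cap H_0^1(\Omega)$ and $Y:=L^2(\Omega)$, viewed as complex Banach spaces. Since $d\le 3$ and $\partial\Omega\in C^2$, $X\hookrightarrow L^\infty$ is a Banach algebra, so for integer $p\ge 2$ the map $u\mapsto u^p$ is a polynomial $X\to Y$, hence entire. Define
\begin{align*}
F:\mathbb{C}\times X\times\mathbb{C}&\to Y\times\mathbb{C},\\
F(z,u,\lambda)&:=\Bigl(-\nabla\!\cdot\!(a\nabla u)+b\bigl(\bm{y}^*+(z-y_i^*)\bm{e}_i\bigr)u+\eta u^p-\lambda u,\;\langle u,u\rangle-1\Bigr),
\end{align*}
where $\langle u,v\rangle:=\int uv$ is the \emph{bilinear} pairing (no complex conjugation, so that $F$ stays holomorphic). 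Since $b$ depends affinely on $z$ and every other ingredient is linear or polynomial, $F$ is jointly complex analytic, and $F(y_i^*,u^*,\lambda^*)=0$ by the normalization $\|u^*\|_{L^2}=1$.

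The main obstacle is to show that the partial Fréchet derivative
\begin{align*}
D_{(u,\lambda)}F(y_i^*,u^*,\lambda^*)[v,\mu]=\bigl((\mathcal{T}(\bm{y}^*)-\lambda^*)v-\mu u^*,\;2\langle u^*,v\rangle\bigr)
\end{align*}
is an isomorphism $X\times\mathbb{C}\to Y\times\mathbb{C}$; this is the place where the nonlinear structure really enters. By Lemma \ref{strictpositive}, $u^*>0$ is the ground state of $\mathcal{O}(\bm{y}^*)$ with eigenvalue $\lambda^*$, and since $\mathcal{T}(\bm{y}^*)=\mathcal{O}(\bm{y}^*)+(p-1)\eta(u^*)^{p-1}$ with strictly positive added potential, the min--max principle yields a strict spectral gap $\lambda_T(\bm{y}^*)>\lambda^*$. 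Consequently $\mathcal{T}(\bm{y}^*)-\lambda^*I$ is a positive-definite self-adjoint isomorphism $X\to Y$, and for any datum $(f,g)\in Y\times\mathbb{C}$ one inverts the derivative explicitly by taking $v=(\mathcal{T}(\bm{y}^*)-\lambda^*)^{-1}(f+\mu u^*)$ and
\begin{align*}
\mu=\frac{g/2-\langle u^*,(\mathcal{T}(\bm{y}^*)-\lambda^*)^{-1}f\rangle}{\langle u^*,(\mathcal{T}(\bm{y}^*)-\lambda^*)^{-1}u^*\rangle},
\end{align*}
where the denominator is strictly positive because $(\mathcal{T}(\bm{y}^*)-\lambda^*)^{-1}$ is positive-definite and $u^*\not\equiv 0$. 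The holomorphic IFT then supplies $O_i\ni y_i^*$ and a unique analytic branch $(u(z),\lambda(z))$ on $O_i$ solving $F(z,\cdot,\cdot)=0$; uniqueness of the ground eigenpair of \eqref{semi} identifies this branch with the ground eigenpair for real $z$.

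For the joint assertion on $\prod_{i\in M}O_i$ with $|M|<\infty$, I would either rerun the argument verbatim with the scalar $z\in\mathbb{C}$ replaced by $\bm{z}=(z_i)_{i\in M}\in\mathbb{C}^{|M|}$---the coefficient $b$ is jointly affine in these finitely many variables, so $F$ remains jointly holomorphic and the invertibility criterion above is unchanged---or, having separate analyticity already in hand, directly invoke Hartogs' theorem on the finite-dimensional polydisc to upgrade separate to joint analyticity.
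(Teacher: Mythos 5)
Your proposal is correct and rests on the same key ingredient as the paper's proof: the linearization of the eigenvalue equation at the ground state involves $\mathcal{T}(\bm{y})-\lambda(\bm{y})$, and its invertibility follows from the strict gap $\lambda_T(\bm{y})>\lambda(\bm{y})$, which you (like the paper) derive from the strict positivity of the ground states of $\mathcal{O}$ and $\mathcal{T}$ via Lemma \ref{strictpositive}. The technical implementation differs in two ways worth noting. First, you impose the normalization as an explicit extra scalar equation $\langle u,u\rangle-1=0$ and invert the resulting bordered operator by an explicit formula, whereas the paper absorbs the constraint by restricting the perturbation $f$ to the codimension-one subspace $E=\mathrm{span}\{\psi\}^{\perp}$ with $(\mathcal{T}-\lambda)\psi=u$ and maps onto $\mathrm{span}\{u\}^{\perp}\subset H^{-1}$; your bordered formulation is arguably cleaner since the positivity of the denominator $\langle u^*,(\mathcal{T}-\lambda^*)^{-1}u^*\rangle$ makes the isomorphism check fully explicit. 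Second, you work in $H^2\cap H_0^1\to L^2$ (using $d\le 3$ and elliptic regularity so that $u\mapsto u^p$ is polynomial and $(\mathcal{T}-\lambda^*)^{-1}$ is bounded into $H^2$), while the paper works in the weaker frame $H_0^1\to H^{-1}$ under the Sobolev condition $(d,p)\in\mathcal{A}$. Two small points: the theorem as stated also lets $a$ depend analytically on $\bm{y}$, so your $F$ should carry $a(\bm{y}^*+(z-y_i^*)\bm{e}_i)$ as well (this changes nothing in the linearization at the base point); and your final identification of the IFT branch with the ground eigenpair for real $z$ tacitly uses that the ground eigenpair stays in the IFT neighborhood, i.e.\ some continuity in $z$ --- but the paper's own proof makes the identical leap, so this is a shared, not a new, gap.
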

\bigskip 

\begin{proof}
 Let $\bm{y}\in U$ and $i\geq 1$ be given. Let us define an operator $\mathcal{N}(\xi,\ell,f)={(-\nabla\cdot( a}(\bm{y}+\xi \bm{e}_i) \nabla)+b(\bm{y}+\xi \bm{e}_i)+\eta(u(\bm{y})+f)^{p-1}-\ell)(u(\bm{y})+f)$ defined on $\mathbb{C}\times \mathbb{C}\times E$ where $E=span\{\psi\}^{\perp}$ in $H_0^1$ and $\psi$ will be specified later. Then note that $\mathcal{N}$ is complex analytic in $h$, $\lambda$ and $f$ due to the polynomial dependence. Also, observe that, for each $(\mu,w)\in \mathbb{C}\times E$,
 \begin{align}
 &D_{(\ell,f)}\mathcal{N}(0,\lambda(\bm{y}),0)
 (\mu,w)\notag
 \\
 &=
 \lim_{h\rightarrow 0} \frac{\mathcal{N}(0,\lambda(\bm{y})+h\mu,hw)-\mathcal{N}(0,\lambda(\bm{y}),0)}{h}\notag
 \\
 &=
 \lim_{h\rightarrow 0} \frac{ (-\nabla\cdot (a(\bm{y}) \nabla)+b(\bm{y})+\eta(u(\bm{y})+hw)^{p-1}-\lambda(\bm{y})-h\mu)(u(\bm{y})+hw)}{h}   \notag
\\
&=\lim_{h\rightarrow 0}
\frac{(-\nabla\cdot(a(\bm{y})\nabla)+b(\bm{y})-\lambda(\bm{y})-h\mu)(u(\bm{y})+hw)+\eta(u(\bm{y})+hw)^{p}-\eta u(\bm{y})^p+\eta u(\bm{y})^p}{h}
 \notag\\
&=\lim_{h\rightarrow 0}
 \frac{(-\nabla\cdot(a(\bm{y})\nabla)+b(\bm{y})-\lambda(\bm{y})-h\mu)hw-h\mu u(\bm{y})+\eta(u(\bm{y})+hw)^{p}-\eta u(\bm{y})^p}{h}
 \notag 
\\
&=
{ (-\nabla\cdot (a(\bm{y}) \nabla)+b(\bm{y})-\lambda(\bm{y}))w-\mu u(\bm{y})+\eta\lim_{h\rightarrow 0}
 \frac{ (u(\bm{y})+hw)^{p}- u(\bm{y})^p}{h} }
\notag 
\\
 &=(-\nabla\cdot (a(\bm{y}) \nabla)+b(\bm{y})-\lambda(\bm{y}))w-\mu u(\bm{y})
 +p\eta  u(\bm{y})^{p-1}w
 \notag 
\\
 &=
 (-\nabla\cdot (a(\bm{y}) \nabla)+b(\bm{y})+p\eta u(\bm{y})^{p-1}-\lambda(\bm{y}))w-\mu u(\bm{y})\notag
 \\
 &=
 \left( \mathcal{T}(\bm{y})-\lambda(\bm{y})\right) w - \mu u(\bm{y}),
 \end{align}
 where we used the fact that $\mathcal{N}(0,\lambda(\bm{y}),0)=0$ in the second equality, {and used the equation $\mathcal{O}u(\bm{y})-\lambda(\bm{y})=0$ in the fourth equality}.
We claim that $\lambda(\bm{y})$ is not an eigenvalue of $\mathcal{T}(\bm{y})$. If $\lambda(\bm{y})$ is an eigenvalue of the linear operator $\mathcal{T}(\bm{y})$, then there should exist the corresponding eigenfunction $v(\bm{y})$ such that 
\begin{align}
\label{lTlessl}
\lambda_{T}(\bm{y}) \leq \left< \mathcal{T}(\bm{y})v(\bm{y}),v(\bm{y})\right>=\lambda(\bm{y}).
\end{align}
However, by the strict positivity of $u(\bm{y})$ and $u_T(\bm{y})$,
\begin{align}
\label{lslesslT}
\lambda(\bm{y})&\leq \left< \mathcal{O}u_T(\bm{y}),u_T(\bm{y}) \right>\notag \\
& < \left< \mathcal{O}u_T(\bm{y}),u_T(\bm{y}) \right> +(p-1)\eta\int u(\bm{y})^{p-1}u_T(\bm{y})^2\notag \\
& =\left<\mathcal{T}(\bm{y})u_T(\bm{y}),u_T(\bm{y}) \right>\notag\\
&= \lambda_T(\bm{y}),
\end{align}
 where we used the fact that $u(\bm{y})$ is the ground state of the linear operator $\mathcal{O}(\bm{y})$ with the ground eigenvalue $\lambda(\bm{y})$ proved in Lemma \ref{strictpositive} in the first inequality. Note that the two inequalities \eqref{lTlessl} and \eqref{lslesslT} make a contradiction. Thus, $\lambda(\bm{y})$ cannot be an eigenvalue of $\mathcal{T}(\bm{y})$. {Then it is clear that}  $\mathcal{T}(\bm{y})-\lambda(\bm{y}): H_0^1\rightarrow H^{-1}$ is an isomorphism.
 
  Now, let $\psi\in H_0^1$ be such that $(\mathcal{T}(\bm{y})-\lambda(\bm{y}))\psi=u(\bm{y})$. Since we can consider $(\mathcal{T}(\bm{y})-\lambda(\bm{y}))$ to be linearly acting on $\psi$, {we can restrict the operator so that} it is now an isomorphism between $E$ and $span\{u(\bm{y})\}^{\perp}\subset H^{-1}$. It implies that $D_{\lambda,f}\mathcal{N}(0,\lambda(\bm{y}),0): {\mathbb{C}}\times E\rightarrow {span\{u(\bm{y})\}^\perp\subset H^{-1}}$ is an isomorphism. {The boundedness of the inverse operator is clear. Indeed,  observe that, with the varying generic constant $C$,
\begin{align}
\|(\mu,w)\|_{\mathbb{R}\times H_0^1}^2
&= |\mu|^2+\|w\|_{H_0^1}^2
\notag\\
&\leq \frac{1}{\|u(\bm{y})\|^2_{H^{-1}}} \|\mu u(\bm{y})\|^2_{H^{-1}}+C\|(\mathcal{T}(\bm{y})-\lambda(\bm{y}))w\|_{H^{-1}}^2
\notag\\
&\leq C\left( \|\mu u(\bm{y})\|^2_{H^{-1}}+\|(\mathcal{T}(\bm{y})-\lambda(\bm{y}))w\|_{H^{-1}}^2  \right)
\notag\\
&= C\|(\mathcal{T}(\bm{y})-\lambda(\bm{y}))w -\mu u(\bm{y})  \|_{H^{-1}}^2,
\end{align}  
where, for the first inequality, we used the fact that $\|u(\bm{y})\|_{H^{-1}}\neq 0$ because $u(\bm{y})\neq 0$ followed from $\|u(\bm{y})\|_{L^2}=1$, and the fact that $(\mathcal{T}(\bm{y})-\lambda(\bm{y}))$ is an isomorphism so there is such a constant $C>0$. The last equality follows from the orthogonality between the {image of $E$ under  $(\mathcal{T}(\bm{y})-\lambda(\bm{y}))$} and $span\{u(\bm{y})\}$ in $H^{-1}$.} {To be specific, recall that the mapping $(\mathcal{T}(\bm{y})-\lambda(\bm{y})):\mathbb{C}\times E \rightarrow span\{u(\bm{y})\}^\perp\subset H^{-1}$ is an isomorphism, and $span\{u(\bm{y})\}^\perp$ is orthogonal complement of span $span\{u(\bm{y})\}$ in the sense of $H^{-1}$ norm.}  Therefore, by implicit function theorem, there exists an open neighborhood, $O_i\subset \mathbb{C}$, of $0$  such that we have complex analytic functions $\ell: O_i\rightarrow \mathbb{R}$ and $f: O_i\rightarrow E$ such that $\mathcal{N}(\xi,\ell(\xi),f(\xi))=0$ for all $\xi\in O_i$ and $\ell(0)=\lambda(\bm{y})$, $f(0)=0$. Furthermore, the ground states are unique for each ${\xi}\in O_i$. Therefore, $u(\bm{y}+\xi\bm{e}_i)=u(\bm{y})+f(\xi)$ and so $u:O_i\rightarrow H_0^1$ is also complex analytic in $i$ th coordinate. The last statement is clear by Hartog's Theorem.
\end{proof}
\bigskip

 Note that Theorem \ref{anal} is stronger than what we need for our problem \eqref{semi}. For example, in the theorem, the diffusion coefficient $a$ is parametrized, and $a(\bm{y})$ and $b(\bm{y})$ are only assumed to be analytic in $\bm{y}$. {In addition, in the  case of $\eta =0$, the operator $(-\nabla\cdot(a(\bm{y})\nabla)+b(\bm{y})-\lambda(\bm{y}))$ is still an isomorphism between $span\{\psi\}^{\perp}\subset H_0^1$ with $\psi=u(\bm{y})$ and $span\{u(\bm{y})\}^{\perp}\subset H^{-1}$ so this argument} also works for linear eigenvalue problem. With this regularity of the ground eigenpair, now, we are allowed to take the derivatives of order $\bm{\nu}$ for any $\bm{\nu}\in \mathcal{F}$. As a first observation of this result, we find the representation of the mixed derivative of the nonlinear term that will be useful in the following results.
\\

\begin{lemma}
\label{gp}
For any multi-index $\bm{\nu}\in \mathcal{F}$, { any analytic function $u$ that admits $\partial^{\bm{\nu}}u$ for any $\bm{\nu}\in \mathcal{F}$, and any positive integer $p$, the following equality holds:}
\begin{align}
\label{gprep}
\partial^{\bm{\nu}}(u^p)=pu^{p-1}\partial^{\bm{\nu}} u  +\sum_{n=0}^{p-2}
{\begin{pmatrix}
p\\
n
\end{pmatrix}}
u^n
\sum_{\substack{\sum_{j=1}^{p-n} \bm{m}_j=\bm{\nu} \\ 
0\neq \bm{m}_j<\bm{\nu}}}
\begin{pmatrix}
&\bm{\nu}\\
\bm{m}_1&\cdots &\bm{m}_{p-n}
\end{pmatrix}
\prod_{i=1}^{p-n} \partial^{\bm{m}_i}u.
\end{align}
\end{lemma}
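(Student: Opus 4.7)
The claim is essentially a bookkeeping consequence of the generalized (multinomial) Leibniz rule applied to the $p$-fold product $u^p = u \cdot u \cdots u$, together with a careful grouping of terms by the number of factors whose derivative order is $\bm{0}$. I would proceed in two stages: first establish the raw multinomial Leibniz expansion, then reorganize its summation index.

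For the first stage, I would prove by induction on $p \geq 1$ that, for any analytic function $u$ admitting all mixed derivatives,
\begin{align}
\partial^{\bm{\nu}}(u^p) \;=\; \sum_{\bm{m}_1+\cdots+\bm{m}_p = \bm{\nu}} \begin{pmatrix} & \bm{\nu} \\ \bm{m}_1 & \cdots & \bm{m}_p \end{pmatrix} \prod_{i=1}^p \partial^{\bm{m}_i} u,
\end{align}
where the sum is over multi-indices $\bm{m}_i \in \mathcal{F}$ with $\bm{m}_i \geq \bm{0}$. The base case $p = 1$ is trivial, and the inductive step follows by writing $u^p = u \cdot u^{p-1}$, applying the ordinary multi-index Leibniz rule to this product, using the inductive hypothesis on $\partial^{\bm{m}}(u^{p-1})$, and collapsing the resulting iterated binomial coefficients into a single multinomial via the identity $\binom{\bm{\nu}}{\bm{m}}\binom{\bm{\nu}-\bm{m}}{\bm{m}_1,\ldots,\bm{m}_{p-1}} = \binom{\bm{\nu}}{\bm{m},\bm{m}_1,\ldots,\bm{m}_{p-1}}$ applied coordinatewise.

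For the second stage, I would partition the summation according to the number $n$ of indices $i \in \{1,\ldots,p\}$ for which $\bm{m}_i = \bm{0}$. Since $\partial^{\bm{0}}u = u$, each such factor contributes a literal $u$, so the product over the zero-indices yields $u^n$; the multinomial coefficient is unaffected by factors of zero indices. The positions of the $n$ zero slots can be chosen in $\binom{p}{n}$ ways, accounting for the combinatorial factor in the stated formula. For the remaining $p - n$ non-zero indices, the constraint becomes $\sum_{j=1}^{p-n} \bm{m}_j = \bm{\nu}$ with each $\bm{m}_j \neq \bm{0}$. I would then handle the two regimes separately: when $p - n = 1$ (equivalently $n = p-1$), the unique non-zero index must equal $\bm{\nu}$, producing $p$ identical terms that collapse into the isolated contribution $p u^{p-1} \partial^{\bm{\nu}} u$; when $p - n \geq 2$ (equivalently $0 \leq n \leq p-2$), the condition $\bm{m}_j < \bm{\nu}$ is automatic, because if some $\bm{m}_{j_0} = \bm{\nu}$ then adding any other nonzero $\bm{m}_j$ would force the total to exceed $\bm{\nu}$. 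This is exactly the residual double sum in \eqref{gprep}.

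I do not expect a genuine obstacle here; the entire content is combinatorial. The one place where care is needed is verifying that the strict inequality $\bm{m}_j < \bm{\nu}$ in the stated sum really is equivalent to the combination of $\bm{m}_j \neq \bm{0}$ and $p - n \geq 2$ under the constraint $\sum_j \bm{m}_j = \bm{\nu}$, which I would justify by the coordinatewise observation above. The case $p = 1$ degenerates correctly because the outer sum $\sum_{n=0}^{p-2}$ is empty and the isolated term gives $\partial^{\bm{\nu}}u$, while for $p = 2$ the formula reduces to the ordinary Leibniz rule.
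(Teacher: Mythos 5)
Your proposal is correct and follows essentially the same route as the paper: both start from the multinomial Leibniz expansion of $\partial^{\bm{\nu}}(u^p)$ and regroup the terms according to the number $n$ of factors receiving the zero derivative, with the factor $\binom{p}{n}$ counting the placements of the zero slots and the case $n=p-1$ collapsing to the isolated term $pu^{p-1}\partial^{\bm{\nu}}u$. Your observation that the constraint $\bm{m}_j<\bm{\nu}$ is automatic once all $\bm{m}_j\neq 0$ and $p-n\geq 2$ is the same point the paper makes when excluding the cases $n=p$ and $n=p-1$ from the inner sum.
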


\begin{proof}
{
First of all, for any positive integer $p$ and $n\in [p]:= \{1,2,\dots, p\}$, let 
\begin{align*}
A_p&=\left\{ (\bm{m}_i)_{i=1}^{p} : \sum_{i=1}^{p}\bm{m}_i = \bm{\nu}, \text{ }\bm{m}_i< \bm{\nu}\text{ for all } i \right\},\\
B_p^n&=\left\{(\bm{m}_{i})_{i=1}^{p} \in A_p : \bm{m}_n=0  \right\}.
\end{align*}}

\textcolor{black}{
Now, observe that we can classify the sequence in $A_p$ as follows:
{
\begin{align}
\label{classification}
A_p
&=
B_p^1\dot{\cup} (B_p^1)^c
\notag\\
&=
(B_p^1\cap B_p^2)
\dot{\cup} (B_p^1\cap (B_p^2)^c)
\dot{\cup} 
((B_p^1)^c\cap B_p^2)
\dot{\cup} ((B_p^1)^c\cap (B_p^2)^c)
\notag\\
&=
(B_p^1\cap B_p^2\cap B_p^3)
\dot{\cup} (B_p^1\cap B_p^2\cap (B_p^3)^c)
\dot{\cup} (B_p^1\cap (B_p^2)^c\cap B_p^3)
\dot{\cup} (B_p^1\cap (B_p^2)^c\cap (B_p^3)^c)
\notag\\
&
\dot{\cup} ((B_p^1)^c\cap B_p^2\cap B_p^3)
\dot{\cup} ((B_p^1)^c\cap B_p^2\cap (B_p^3)^c)
\dot{\cup} ((B_p^1)^c\cap (B_p^2)^c\cap B_p^3)
\notag\\
&
\dot{\cup} ((B_p^1)^c\cap (B_p^2)^c\cap (B_p^3)^c)
\notag\\
&=\cdots
\notag\\
&=
\dot{\bigcup_{0\leq n\leq p-2}} 
\left( 
{{\bigcup_{\pi\in \Pi_{[p]}}}} 
\left( 
\bigcap_{j=1}^n B_p^{\pi(j)}
\cap 
\bigcap_{j=n+1}^p (B_p^{\pi(j)})^c
\right)
\right),
\end{align}
}
where $\dot{\cup}$ denotes a disjoint union, $(B_p^n)^c=A_p\setminus B_p^n$, and  $\Pi_{[p]}$ is the set of all possible permutations of $[p]:=\{1,\dots, p\}$. To be specific, for each $n\in \left\{ 1,2,\dots, p-2\right\}$, the inner union is collection of all sequences of multi-indices with $n$ number of zero and $p-n$ number of nonzero indices. The case of $n=p$ is removed because it is when all sequences are zero which is not possible because of the summation constraint $\sum_{i=1}^{p}\bm{m}_i=\bm{\nu}$. The case of $n=p-1$ is removed because of the condition that $\bm{m}_i<\bm{\nu}$ for all $i$.   Now, for any $n\in \{0,1,\dots, p-2\}$ and for any  $\pi\in\Pi_{[p]}$, we use following notations for our convenience:
{
\begin{align}
\label{B_pi}
B(n,\pi):=\bigcap_{j=1}^n B_p^{\pi(j)}
\cap 
\bigcap_{j=n+1}^p (B_p^{\pi(j)})^c.
\end{align}}
}
{
{
 Note that, for given $\pi\in \Pi_{[p]}$ and $k\in [p]$, we have $\bm{m}_k=0$ for all $(\bm{m}_i)_{i=1}^p\in {B(n,\pi)}$ or $\bm{m}_k\neq 0$ for all $(\bm{m}_i)_{i=1}^p\in {B(n,\pi)}$. Also, note that, for any $\sigma,\pi\in \Pi_{[p]}$,  observe that}
\begin{align}
\label{symmetric}
\sum_{(\bm{m}_i)_{i=1}^{p}\in {B(n,\pi)}} 
\begin{pmatrix}
&\bm{\nu}\\
\bm{m}_1&\cdots &\bm{m}_p
\end{pmatrix}
\prod_{j=1}^{p} \partial^{\bm{m}_j}u
&=
u^n\sum_{(\bm{m}_i)_{i=1}^{p}\in A_{p-n}' } 
\begin{pmatrix}
&\bm{\nu}\\
\bm{m}_1&\cdots &\bm{m}_{p-n}
\end{pmatrix}
\prod_{j=1}^{p-n} \partial^{\bm{m}_j}u
\notag\\
&=
\sum_{(\bm{m}_i)_{i=1}^{p}\in {B(n,\sigma)}} 
\begin{pmatrix}
&\bm{\nu}\\
\bm{m}_1&\cdots &\bm{m}_p
\end{pmatrix}
\prod_{j=1}^{p} \partial^{\bm{m}_j}u,
\end{align}
where, for any positive integer $n$,
\begin{align}
A_{n}'=\left\{ (\bm{m}_{i})_{i=1}^{n}\in A_n: \bm{m}_i\neq 0 \text{ for all }i \right\}.
\end{align}
Now, it is easy to see that, for given $n\in \{0,\dots, p-2\}$, in the inner union of the last expression of \eqref{classification}, there are $\frac{p!}{n!(p-n)!}$ disjoint components. Applying the properties above, {for arbitrary permutation $\pi\in \prod_{[p]}$,} we have the following expression:
\begin{align}
\label{last}
\partial^{\bm{\nu}}(u^p)
&=
{\sum_{\sum_{i=1}^{p}\bm{m}_i=\bm{
\nu}}
\begin{pmatrix}
&\bm{\nu}\\
\bm{m}_1&\cdots &\bm{m}_p
\end{pmatrix}
\prod_{j=1}^{p} \partial^{\bm{m}_j}u}
\notag\\
&=
pu^{p-1}\partial^{\bm{\nu}}u
+
\sum_{(\bm{m}_i)_{i=1}^{p} \in A_p}\begin{pmatrix}
&\bm{\nu}\\
\bm{m}_1&\cdots &\bm{m}_p
\end{pmatrix}
\prod_{j=1}^{p} \partial^{\bm{m}_j}u
\notag\\
&=
{pu^{p-1}\partial^{\bm{\nu}}u
+
\sum_{n=0}^{p-2}
\frac{p!}{n!(p-n)!}
\sum_{(\bm{m}_i)_{i=1}^{p}\in B(n,\pi) }
\begin{pmatrix}
&\bm{\nu}
\\
\bm{m}_1&\cdots &\bm{m}_{p}
\end{pmatrix}
\prod_{j=1}^{p} \partial^{\bm{m}_j}u}
\notag\\
&=
pu^{p-1}\partial^{\bm{\nu}}u
+
\sum_{n=0}^{p-2}
{
\begin{pmatrix}
p\\
n
\end{pmatrix}
}
u^n\sum_{(\bm{m}_i)_{i=1}^{p-n}\in A_{p-n}' } 
\begin{pmatrix}
&\bm{\nu}
\\
\bm{m}_1&\cdots &\bm{m}_{p-n}
\end{pmatrix}
\prod_{j=1}^{p-n} \partial^{\bm{m}_j}u,
\end{align}
where, again, the case of $n=p$ is zero as $A_0'=\emptyset$, and the case of $n=p-1$ gives the first term $pu^{p-1}\partial^{\bm{\nu}}u$. {In \eqref{last}, the second equality is by isolating full derivative of order $\bm{\nu}$, the third equality is by the disjoint decomposition of $A_p$ in \eqref{classification}, and the fourth equality is by \eqref{symmetric}.}
}
\end{proof}
\bigskip

{The lemma above can be seen as a variant of Fa\'a di Bruno's formula, but the representation we obtained above is for special index set. The reason for the lemma above is that the obtained representation is convenient for our main theorem, and it is hard to deduce the desired representation from known Fa\'a di Bruno's formula directly. In order for the sanity check for \eqref{gprep}, a simple example is provided in Appendix \ref{appendixc}.} 
\\

{
For our convenience, we will be using the following notation for the rest of this paper:
\begin{align}
\label{gamma}
\Gamma(p,\bm{\nu})=\sum_{n=0}^{p-2}
{\begin{pmatrix}
p\\
n
\end{pmatrix}}
u^n\gamma(p-n,\bm{\nu}),
\end{align}
where
\begin{align}
\gamma(n,\bm{\nu}):=\sum_{\substack{\sum_{j=1}^n \bm{m}_j=\bm{\nu} \\ 
0\neq \bm{m}_j<\bm{\nu}}}
\begin{pmatrix}
&\bm{\nu}\\
\bm{m}_1&\cdots &\bm{m}_n
\end{pmatrix}
\prod_{j=1}^{n} \partial^{\bm{m}_i}u.
\end{align}
}

\subsection{Uniform bounds of smallest eigenvalues.}\label{uniform gap}In our analysis of estimating the mixed derivatives, the uniform boundedness of the smallest eigenvalue and the corresponding eigenfunction takes one of the crucial roles. In this subsection, we show the uniform boundedness of the ground eigenpairs of $\mathcal{O}$ and $\mathcal{T}$.

 When showing uniform bounds, a useful technique is to show that $\lambda(U)$, the image of the parameter space $U$ under the map $\lambda: U \rightarrow \mathbb{R}$ is compact. However, even if we have the result that $\lambda : U\rightarrow \mathbb{R}$ is analytic, we cannot conclude that $\lambda(U)$ is compact because it is hard to see if $U$ is compact. For example, the compactness of $U$ depends on the topology $U$ is living in and we have not defined it. Fortunately, by Theorem 2.7 in \cite{Cohen2015}, it has been shown that $b(U)$, the image of $U$ under the map $b:U\rightarrow L^\infty$ is compact in $L^\infty$. In order to use this fact, we want to investigate the continuous dependence of the eigenvalue on the coefficient function $b$.
 
   Now, note that we can consider solving the problem \eqref{semi1} as mapping the coefficient function $b$ to the eigenpair $(u,\lambda)$. In other words, we can see $u=u(b)$ and $\lambda=\lambda(b)$. In this perspective, the following lemma shows the desired continuous dependence of the eigenvalue on the coefficient function $b$ using the implicit function theorem technique used in Theorem \ref{anal}.\\

\begin{lemma}
\label{comp}
 Suppose that $(u,\lambda)$ is the ground eigenpair of problem \eqref{semi1}. Then the eigenvalue, $\lambda: L^\infty\rightarrow \mathbb{C}$, continuously depends on the coefficient function $b$.
\end{lemma}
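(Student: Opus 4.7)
The plan is to reuse the implicit-function-theorem machinery of Theorem~\ref{anal}, but with the perturbation direction taken in $L^\infty$ rather than along a single coordinate $y_i$. Fix $\widetilde{b}\in L^\infty$ satisfying Assumption~\ref{assumption} and let $(\widetilde{u},\widetilde{\lambda})$ be its ground eigenpair produced by Theorem~\ref{exiuni}. Define
\[
\mathcal{N}:L^\infty\times\mathbb{C}\times E\to H^{-1},\qquad
\mathcal{N}(c,\ell,f)=\bigl(-\nabla\cdot(a\nabla)+c+\eta(\widetilde{u}+f)^{p-1}-\ell\bigr)(\widetilde{u}+f),
\]
where $E=\operatorname{span}\{\psi\}^{\perp}\subset H_0^1$ and $\psi$ is the element $(\mathcal{T}(\widetilde{b})-\widetilde{\lambda})^{-1}\widetilde{u}$ constructed in the proof of Theorem~\ref{anal}. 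Then $\mathcal{N}(\widetilde{b},\widetilde{\lambda},0)=0$, $\mathcal{N}$ is affine in $c$ and polynomial in $(\ell,f)$ (hence jointly $C^\infty$), and the partial derivative $D_{(\ell,f)}\mathcal{N}(\widetilde{b},\widetilde{\lambda},0)$ coincides with the map $(\mu,w)\mapsto(\mathcal{T}(\widetilde{b})-\widetilde{\lambda})w-\mu\widetilde{u}$, which was already shown in Theorem~\ref{anal} to be a bounded isomorphism from $\mathbb{C}\times E$ onto $\operatorname{span}\{\widetilde{u}\}^{\perp}\subset H^{-1}$.

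The Banach-space implicit function theorem then yields an $L^\infty$-neighborhood $V$ of $\widetilde{b}$ together with continuous maps $\ell:V\to\mathbb{C}$ and $f:V\to E$ satisfying $\ell(\widetilde{b})=\widetilde{\lambda}$, $f(\widetilde{b})=0$, and $\mathcal{N}(c,\ell(c),f(c))=0$ for every $c\in V$. Reading this equation as the eigenvalue problem \eqref{semi1} with coefficient $c$, eigenfunction $\widetilde{u}+f(c)$, and eigenvalue $\ell(c)$, and invoking the uniqueness-up-to-modulus part of Theorem~\ref{exiuni}, the plan is to identify $\ell(c)$ with the ground eigenvalue $\lambda(c)$. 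Continuity of $\ell$ would then give continuity of $\lambda$ at $\widetilde{b}$, and since $\widetilde{b}\in L^\infty$ is arbitrary this delivers the global continuity $\lambda:L^\infty\to\mathbb{C}$ claimed in the lemma.

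The main obstacle is precisely this last identification, since a priori the implicit function theorem only produces \emph{some} local branch of eigenpairs, not necessarily the \emph{ground} one of the perturbed operator. I would handle this by shrinking $V$ so that $\widetilde{u}+f(c)$ remains strictly positive in $\Omega$: because $f(c)\to 0$ in $H_0^1$ and $\widetilde{u}$ is strictly positive in $\Omega$ by Theorem~\ref{exiuni}, standard Sobolev/elliptic regularity control of the perturbation preserves positivity on a sufficiently small $V$. Once positivity is secured, the orthogonality-versus-positivity argument used in Lemma~\ref{strictpositive} forces the local branch $(\widetilde{u}+f(c),\ell(c))$ to coincide with the ground eigenpair of $\mathcal{O}(c)$ (equivalently, of \eqref{semi1} at coefficient $c$), which completes the identification.
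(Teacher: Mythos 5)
Your proposal follows essentially the same route as the paper: the paper's proof defines exactly this map $\mathcal{M}(c,\ell,w)=(-\nabla\cdot(a\nabla)+c+\eta(u+w)^{p-1}-\ell)(u+w)$ on $L^\infty\times\mathbb{C}\times E$, computes the same partial derivative $(\mu,f)\mapsto(\mathcal{T}-\lambda)f-\mu u$, invokes its invertibility as in Theorem~\ref{anal}, and applies the Banach-space implicit function theorem to get a continuous local branch, then appeals to uniqueness of the ground state to patch the local branches together. The one place you go beyond the paper is the identification of the IFT branch with the \emph{ground} eigenpair of the perturbed problem, which the paper passes over with the bare assertion that ``the ground state is unique''; your instinct that this needs an argument is sound, but the specific fix is fragile: since $\widetilde{u}$ vanishes on $\partial\Omega$, strict positivity of $\widetilde{u}$ in the open set $\Omega$ gives no uniform lower bound near the boundary, so even an $L^\infty$-small (let alone merely $H_0^1$-small) perturbation $f(c)$ need not leave $\widetilde{u}+f(c)$ positive everywhere. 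A cleaner closure would argue via continuity of both $\ell(c)$ and the true ground eigenvalue (the latter from the Lipschitz dependence of the minimization \eqref{genergy} on $b$) together with the uniform gap of Lemma~\ref{udiff}-type between the ground branch and the rest of the spectrum, rather than via pointwise positivity of the perturbed function.
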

\bigskip 

\begin{proof}
We use a similar analysis with Theorem \ref{anal}.
Let $u$ be the ground state and let us define an operator $\mathcal{M}(c,\ell,w)=-\nabla\cdot(a\nabla)(u+w)+c(u+w)+\eta (u+w)^p-\ell (u+w)$ on $L^\infty\times \mathbb{C}\times E$ where $E\subset H_0^1$ will be specified later. It is clear that $\mathcal{M}$ is analytic. Let $b\in L^\infty$ be given. Then, by the previous section, we have a unique ground state $(\lambda, u)$ so $\mathcal{M}(b,\lambda,0)=0$. Then observe that
\begin{align}
&D_{(\ell,w)}\mathcal{M}(b,\lambda,0)(\mu,f)\notag\\
&=\lim_{h\rightarrow \infty}\frac{\mathcal{M}(b,\lambda+h\mu,hf)-\mathcal{M}(b,\lambda,0)}{h}\notag\\
&=\lim_{h\rightarrow \infty}\frac{-\nabla\cdot(a\nabla)(u+hf)+b(u+hf)+\eta (u+hf)^p-(\lambda+h\mu) (u+hf)}{h}\notag\\
&=\left( -\nabla\cdot(a\nabla)+b +p\eta u^{p-1}-\lambda  \right)f-\mu u.
\end{align}
We note that $\lambda$ is not an eigenvalue of the linear operator $-\nabla\cdot(a\nabla)+b +p\eta u^{p-1}$ with similar argument used in Lemma \ref{anal}. Thus,  $\mathcal{J}:=\left( -\nabla\cdot(a\nabla)+b +p\eta u^{p-1}-\lambda  \right):H_0^1\rightarrow H^{-1}$ is an isomorphism. Thus, there is $w\in H_0^1$ such that $\mathcal{J}w=u$. Now, defined $E=span(w)^{\perp}$. As $\mathcal{J}$ is linear, $u$ is not in the image of $E$ under $\mathcal{J}$ and so  $D_{(\ell,w)}\mathcal{M}(b,\lambda,0):\mathbb{R}\times E\rightarrow H^{-1}$  is an isomorphism. Then, by the implicit function theorem, there is an open neighborhood $N_b\subset L^\infty$ of the coefficient $b$ such that there are continuous functions $\lambda: N_b\rightarrow \mathbb{R} $ and $f: N_b\rightarrow E$ such that $\mathcal{M}(V,\lambda(V),f(V))=0$ for all $V\in N$ with $\lambda(b)=\lambda$ and $f(b)=0$. 

In conclusion, we have obtained that, for each $b\in L^\infty$, there exists an open neighborhood $N_b$ such that the ground state is a continuous function in $N_b$. As the ground state is unique, with different $b_1,b_2\in L^\infty$, the ground state is well defined on $N_{b_1}\cap N_{b_2}$ if it is not empty. Thus, we can extend our ground state to be continuous over $L^\infty$.
\end{proof}
\bigskip

 With Lemma \ref{comp} shown above, we can claim the uniform boundedness of ground eigenpair as below. \\

\begin{lemma}
\label{ubound}
Suppose that $(u(\bm{y}),\lambda(\bm{y}))$ is the ground eigenpair of \eqref{semi}. Then we have $\lambda \in L^\infty(U,\mathbb{R})$, $u\in L^\infty(U,L^2)$ and $u\in L^\infty(U,H_0^1)$. Furthermore, $\lambda$ is uniformly strictly positive.
\end{lemma}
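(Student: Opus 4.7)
The $L^2$ bound is immediate: any ground state $u(\bm y)$ is required to lie in $H = \{v \in H_0^1 : \|v\|_{L^2}=1\}$, so $\|u(\bm y)\|_{L^2}=1$ for every $\bm y \in U$, giving $u \in L^\infty(U,L^2)$ with norm exactly $1$. The remaining three claims -- the uniform upper bound on $\lambda$, the uniform $H_0^1$-bound on $u$, and the uniform strict positivity of $\lambda$ -- will all be deduced from the variational characterization $\lambda(\bm y) = \mathcal{E}(\bm y) + \eta\frac{p-1}{p+1}\|u(\bm y)\|_{L^{p+1}}^{p+1}$ of \eqref{genergy} together with Lemma \ref{comp}.

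\textbf{Step 1: uniform positivity of $\lambda$.} Since $b(\bm y)\ge 0$, $\eta>0$ and $a\ge a_{\min}$ pointwise, every term in $E[u(\bm y)]$ is nonnegative, so
\begin{equation*}
\lambda(\bm y) \;\ge\; \mathcal{E}(\bm y) \;=\; E[u(\bm y)] \;\ge\; a_{\min}\,\|\nabla u(\bm y)\|_{L^2}^2 \;\ge\; \frac{a_{\min}}{C_P^2}\,\|u(\bm y)\|_{L^2}^2 \;=\; \frac{a_{\min}}{C_P^2},
\end{equation*}
where $C_P$ is the Poincar\'e constant of $\Omega$. This is a $\bm y$-independent positive lower bound.

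\textbf{Step 2: uniform upper bound on $\lambda$.} Here I invoke Lemma \ref{comp}, which realizes $\lambda$ as a continuous function of the coefficient $b\in L^\infty$, together with Theorem 2.7 of \cite{Cohen2015}, which asserts that the image $b(U)\subset L^\infty$ of the affine map $\bm y\mapsto b_0+\sum_j y_j b_j$ is compact in $L^\infty$ (this is where the summability assumption $(\|b_j\|_{L^\infty})\in \ell^1$ is used). Continuous images of compact sets being compact, $\lambda(b(U))$ is compact in $\mathbb{R}$, hence bounded; since $\lambda(\bm y)$ factors through $b(\bm y)$, this yields $\lambda\in L^\infty(U,\mathbb{R})$. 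I expect this to be the main obstacle, as it is the one step that requires genuinely leveraging the parametric structure and the technical compactness result; a fallback, if one prefers to avoid citing \cite{Cohen2015}, is to test the Rayleigh quotient with a fixed $v\in H$ independent of $\bm y$ to get $\mathcal{E}(\bm y)\le E_{\bm y}[v]$, then bound $\|b(\bm y)\|_{L^\infty}\le \sum_{j\ge 0}\|b_j\|_{L^\infty}$ and control the $L^{p+1}$ piece of $\lambda$ by $\mathcal{E}(\bm y)$ as in Step~3.

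\textbf{Step 3: uniform $H_0^1$ bound.} Combining the first displayed inequality of Step~1 with the upper bound from Step~2 gives
\begin{equation*}
a_{\min}\,\|\nabla u(\bm y)\|_{L^2}^2 \;\le\; \mathcal{E}(\bm y) \;\le\; \lambda(\bm y) \;\le\; \|\lambda\|_{L^\infty(U)},
\end{equation*}
so $\|u(\bm y)\|_{H_0^1}^2 = \|\nabla u(\bm y)\|_{L^2}^2 \le \|\lambda\|_{L^\infty(U)}/a_{\min}$, uniformly in $\bm y\in U$. This establishes $u\in L^\infty(U,H_0^1)$ and completes the proof.
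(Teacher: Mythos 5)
Your proof is correct, and the overall strategy (use Lemma \ref{comp} plus the Cohen--DeVore compactness result to get $\lambda\in L^\infty(U)$, then read off the $H_0^1$ bound from the energy) is the same as the paper's. The one genuine point of departure is Step 1: you establish the uniform strict positivity of $\lambda$ directly via the Poincar\'e inequality,
\[
\lambda(\bm y)\;\ge\;\mathcal E(\bm y)\;\ge\;a_{\min}\|\nabla u(\bm y)\|_{L^2}^2\;\ge\;\frac{a_{\min}}{C_P^2}\|u(\bm y)\|_{L^2}^2\;=\;\frac{a_{\min}}{C_P^2},
\]
whereas the paper argues that each $\lambda(\bm y)>0$ (using that $\eta\int u^{p+1}>0$ by the strict positivity of $u$) and then invokes compactness of $\lambda(b(U))$ once more to conclude the infimum is positive. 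Your route is more elementary and even yields an explicit $\bm y$-independent lower bound without needing the strict positivity of $u$ or a second appeal to compactness, which is a small but real improvement in transparency; the paper's version is self-consistent with the machinery it has already set up. Everything else — the normalization giving the $L^2$ bound, the compactness argument for the $L^\infty$ bound, and the inequality $a_{\min}\|\nabla u\|_{L^2}^2\le\lambda(\bm y)\le\|\lambda\|_{L^\infty(U)}$ for the $H_0^1$ bound — matches the paper's argument.
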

\bigskip 

\begin{proof}
By the Lemma 2.7 in \cite{Cohen2015}, the image of the parameter space $U$ under the map $b$, $b(U)$, is a compact set in $L^\infty$. Thus, by the Lemma \ref{comp}, we deduce that $\lambda(b(U))$ is compact because the image of a compact set under a continuous function is compact. It implies that $\lambda\in L^\infty(U,\mathbb{R})$. Furthermore, we know from the normalization condition that $u\in L^\infty(U, L^2)$. Then, these imply that 
\begin{align}
0\leq a_{min}\int|\nabla u(\bm{y})|^2 <  \left<\mathcal{O}u(\bm{y}),u(\bm{y})  \right> =   \lambda(\bm{y})
\end{align}
which tells us that $u\in L^\infty(U,H_0^1)$. In addition, the strict positivity of $u(\bm{y})$ implies the strict inequality, and it implies that the eigenvalue is strictly positive. Again, because $\lambda(U)=\lambda(b(U))$ is compact, there exists a constant $C>0$ such that $\lambda(\bm{y})\geq C>0$ and so $\lambda$ is uniformly away from $0$.
\end{proof}
\bigskip

Now, we denote $\overline{\lambda}$ and $\overline{u}$ to be the uniform upper bound for $\lambda(\bm{y})$ and $\|u(\bm{y})\|_{H_0^1}$. 
The uniform gap between the first and second smallest eigenvalues is crucial in the analysis of the linear case in \cite{alex2019}. In our nonlinear case, the uniform gap between the smallest eigenvalues of $\mathcal{T}$ and $\mathcal{O}$ shown below takes an important role.
\\

\begin{lemma}
\label{udiff} 
There exist a constant $C_T>0$ that is independent of parameters and satisfy the following inequality:
\begin{align}
\label{ct}
\lambda_T(\bm{y})-\lambda(\bm{y}) \geq C_{T}.
\end{align}
for all $\bm{y}\in U$.
\end{lemma}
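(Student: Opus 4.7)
My plan is to combine a pointwise strict inequality with a compactness argument in the coefficient space. The strict inequality is essentially already inside the proof of Theorem~\ref{anal}; the real work is in upgrading it to a uniform-in-$\bm{y}$ bound.

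First, for any fixed $\bm{y}\in U$, the chain displayed in \eqref{lslesslT} in fact yields the quantitative gap
\begin{equation*}
\lambda_T(\bm{y})-\lambda(\bm{y})\;\geq\;\langle\mathcal{T}(\bm{y})u_T(\bm{y}),u_T(\bm{y})\rangle-\langle\mathcal{O}(\bm{y})u_T(\bm{y}),u_T(\bm{y})\rangle\;=\;(p-1)\eta\int u(\bm{y})^{p-1}u_T(\bm{y})^2\;>\;0,
\end{equation*}
using that $\lambda(\bm{y})$ is the ground eigenvalue of the linear operator $\mathcal{O}(\bm{y})$ (Lemma~\ref{strictpositive}), so $\lambda(\bm{y})\le\langle\mathcal{O}(\bm{y})u_T(\bm{y}),u_T(\bm{y})\rangle$, and that $u(\bm{y})$ and $u_T(\bm{y})$ are both strictly positive on $\Omega$. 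The remaining task is to show that this right-hand side cannot degenerate to $0$ as $\bm{y}$ varies over $U$.

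To that end, I will argue that $F(b):=\lambda_T(b)-\lambda(b)$ depends continuously on $b\in L^\infty$. Continuity of $\lambda$ in $b$ is Lemma~\ref{comp}; the same implicit-function-theorem scheme, applied to the eigenfunction branch rather than just the eigenvalue branch, also yields continuous dependence of $u$ on $b$ in $H_0^1$, and this can be lifted to $L^\infty$ by elliptic regularity on the $C^2$ domain $\Omega$ under the admissibility condition $(d,p)\in\mathcal{A}$. Consequently $b+p\eta u(b)^{p-1}$ depends continuously on $b$ in $L^\infty$, and by standard spectral perturbation theory for linear self-adjoint operators with $L^\infty$ potential this promotes to continuity of the ground eigenvalue $\lambda_T$. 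Now by Theorem~2.7 of \cite{Cohen2015}, $b(U)\subset L^\infty$ is compact, so the continuous, strictly positive function $F$ attains a strictly positive minimum
\begin{equation*}
C_T\;:=\;\min_{b\in b(U)}F(b)\;>\;0,
\end{equation*}
which is the constant asserted by the lemma.

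The main obstacle is the continuity step: because $\mathcal{T}(\bm{y})$ depends on $b$ not only directly but also indirectly through $u(\bm{y})$, one cannot simply invoke linear eigenvalue-perturbation theory for $\mathcal{T}$ without first promoting the continuous dependence of $u$ on $b$ from $H_0^1$ up to $L^\infty$ via elliptic regularity, so that the full perturbing potential of $\mathcal{T}$ is controlled in $L^\infty$.
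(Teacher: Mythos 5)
Your proposal is correct and follows essentially the same route as the paper: the pointwise gap $\lambda_T(\bm{y})-\lambda(\bm{y})\geq(p-1)\eta\int u(\bm{y})^{p-1}u_T(\bm{y})^2>0$ from the variational characterization and strict positivity, combined with continuity in $b$ and the compactness of $b(U)$ from \cite{Cohen2015}. In fact you are somewhat more careful than the paper, which cites Lemma~\ref{comp} for the continuity of both $\lambda$ and $\lambda_T$ even though that lemma only treats $\lambda$; your observation that $\lambda_T$ requires an extra step (controlling the dependence of the potential $b+p\eta u(b)^{p-1}$ on $b$ in $L^\infty$ before invoking linear perturbation theory) addresses a point the paper glosses over.
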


\begin{proof}Observe that
\begin{align}
\lambda(\bm{y}) +(p-1)\eta\int  u(\bm{y})^{p-1} u_T(\bm{y})^2 
&\leq 
 \left<\mathcal{O}u_T(\bm{y}),u_T(\bm{y}) \right>+(p-1)\eta\int  u(\bm{y})^{p-1} u_T(\bm{y})^2\notag
\\& =
   \left< \mathcal{T}u_T(\bm{y}),u_T(\bm{y})  \right>\notag
 \\&   =
   \lambda_T(\bm{y}),
\end{align}
where the first inequality is because $\lambda(\bm{y})$ is the ground eigenvalue of $\mathcal{O}$. It implies that
\begin{align}
0<(p-1)\eta\int  u(\bm{y})^{p-1} u_T(\bm{y})^2 \leq \lambda_T(\bm{y})-\lambda(\bm{y}),
\end{align}
where the strict inequality follows from Lemma \ref{sp}. Because $\lambda_T$ and $\lambda$ are continuous functions in the potential function by Lemma \ref{comp}, thanks to compactness shown in \cite{Cohen2015}, $(\lambda_T-\lambda)(U)=(\lambda_T-\lambda)(b(U))$ is compact. Therefore, there exists $C_T>0$ such that  \eqref{ct} holds.
\end{proof}
\bigskip

\subsection{Bounding the mixed derivatives of the ground eigenpair.}\label{bounding eigenpair}In this section, we will estimate the bound for the mixed derivatives. In the analysis, we will use the falling factorial technique suggested by \cite{alexey} for the elliptic eigenvalue problem. For any $r\in \mathbb{R}$ and $n\in \mathbb{N}$, we define
\begin{align}
(r)_n:=
\begin{cases}
1, &n=0,\\
r(r-1)\cdots {(r-(n-1))}, &n\geq 1.
\end{cases}
\end{align}
And we will mainly use, for any $n\in \mathbb{N}$,
\begin{align}
\left[ \frac{1}{2} \right]_n:=\left| \left(\frac{1}{2} \right)_n\right|.
\end{align}
The result of \cite{alexey} shows that this tool helps us to remove $\varepsilon>0$ from the estimation $C(|\bm{\nu}|!)^{1+\varepsilon}\bm{\beta}^{\bm{\nu}}$  obtained for the linear eigenvalue problem in \cite{alex2019}. Two useful properties for our case would be
\begin{align}
\label{fall_ineq}
\left[\frac{1}{2} \right]_{n} \leq n! \leq 2^{n+1}\left[\frac{1}{2} \right]_{n},
\end{align}
and
\begin{align}
\label{fall_ineq1}
\sum_{i=1}^{n-1}
\begin{pmatrix}
n\\i
\end{pmatrix}
\left[ \frac{1}{2} \right]_{i}
\left[\frac{1}{2}
\right]_{n-i}\leq 2\left[\frac{1}{2}  \right]_{n}.
\end{align}
For more information about this tool, see section 2.2(especially Lemma 2.3) in \cite{alexey}.\\

\begin{lemma}
\label{ggcombi}
For given multi-index $\bm{\nu}\in \mathcal{F}$  and a positive integer $p>0$,
\begin{align}
\label{gg}
\sum_{\substack{\sum_{j=1}^p \bm{m}_j=\bm{\nu}\\ 
0\neq \bm{m}_j<\bm{\nu}}}
\begin{pmatrix}
&\bm{\nu}\\
\bm{m}_1&\cdots &\bm{m}_p
\end{pmatrix}
\prod_{j=1}^{p} \left[ \frac{1}{2} \right]_{|\bm{m}_j|}
\leq 2^{p-1} \left[ \frac{1}{2} \right]_{|\bm{\nu}|}.
\end{align}
\end{lemma}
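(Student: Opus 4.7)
The plan is to first reduce the multi-index sum on the left of \eqref{gg} to a one-dimensional sum over integer compositions, and then perform a short induction on $p$ using the falling-factorial inequality \eqref{fall_ineq1}.

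For the reduction, I observe that the summand depends on each $\bm{m}_j$ only through its size $n_j := |\bm{m}_j|$ once the multinomial coefficient is accounted for. Grouping by sizes and invoking the multivariate Vandermonde identity
\begin{align*}
\sum_{\substack{\sum_j \bm{m}_j = \bm{\nu}\\ |\bm{m}_j| = n_j \text{ for each } j}} \begin{pmatrix} &\bm{\nu}\\\bm{m}_1 & \cdots & \bm{m}_p\end{pmatrix} = \frac{|\bm{\nu}|!}{n_1!\cdots n_p!},
\end{align*}
which follows by comparing coefficients of $x_1^{n_1}\cdots x_p^{n_p}$ on the two sides of the trivial identity $\prod_i (x_1+\cdots+x_p)^{\nu_i} = (x_1+\cdots+x_p)^{|\bm{\nu}|}$, the left-hand side of \eqref{gg} collapses to
\begin{align*}
T_p(N) := \sum_{\substack{n_1+\cdots+n_p = N\\ n_j \geq 1}} \frac{N!}{n_1!\cdots n_p!} \prod_{j=1}^p \left[\tfrac{1}{2}\right]_{n_j}, \qquad N := |\bm{\nu}|.
\end{align*}
The constraint $\bm{m}_j < \bm{\nu}$ is automatic here because, with $p \geq 2$ and each $\bm{m}_j$ nonnegative summing to $\bm{\nu}$, having $\bm{m}_j = \bm{\nu}$ would force every other $\bm{m}_i$ to vanish, contradicting $\bm{m}_i \neq 0$; so the original constraints reduce to $n_j \geq 1$.

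Next, I would prove $T_p(N) \leq 2^{p-1}\left[\tfrac{1}{2}\right]_N$ by induction on $p$. The base case $p = 2$ is exactly \eqref{fall_ineq1}. For the inductive step with $p \geq 3$, I peel off $n_p = k$ to write
\begin{align*}
T_p(N) = \sum_{k=1}^{N-1} \binom{N}{k} \left[\tfrac{1}{2}\right]_k T_{p-1}(N-k),
\end{align*}
with $T_{p-1}(N-k)$ interpreted as zero when $N-k < p-1$. Applying the inductive hypothesis and then \eqref{fall_ineq1} yields
\begin{align*}
T_p(N) \leq 2^{p-2}\sum_{k=1}^{N-1}\binom{N}{k}\left[\tfrac{1}{2}\right]_k\left[\tfrac{1}{2}\right]_{N-k} \leq 2^{p-1}\left[\tfrac{1}{2}\right]_N,
\end{align*}
closing the induction.

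The only nontrivial step is the multivariate-to-scalar collapse via the Vandermonde identity; once that is in hand, the induction is essentially mechanical and relies only on the Lemma 2.3-style bound \eqref{fall_ineq1} from \cite{alexey}. A bookkeeping point worth verifying carefully is the equivalence of $\bm{m}_j < \bm{\nu}$ with $\bm{m}_j \neq 0$ for $p \geq 2$, to ensure no terms are gained or lost in the reduction to $T_p(N)$.
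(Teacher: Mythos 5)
Your proposal is correct and follows essentially the same route as the paper's proof: both reduce the multi-index sum to a scalar sum over compositions of $|\bm{\nu}|$ via the multinomial Chu--Vandermonde identity (which the paper cites from \cite{chu} and you derive directly from $\prod_i(x_1+\cdots+x_p)^{\nu_i}=(x_1+\cdots+x_p)^{|\bm{\nu}|}$), and both then apply \eqref{fall_ineq1} a total of $p-1$ times to collect the factor $2^{p-1}$. Your induction on $p$ is merely a tidier packaging of the paper's ``repeatedly apply \eqref{fall_ineq1} to the nested sum'' step, and your bookkeeping remark that $0\neq\bm{m}_j<\bm{\nu}$ reduces to $\bm{m}_j\neq 0$ when $p\geq 2$ is accurate.
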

\begin{proof}
Let a positive integer $p>0$ and a multi-index $\bm{\nu}$ be given. We first observe that we can express the left-hand side in \eqref{gg} by the nested sum as follows:
\begin{align}
\label{combi1}
&\sum_{\substack{\sum_{j=1}^p \bm{m}_j=\bm{\nu} \\ 
0\neq \bm{m}_j<\bm{\nu}}}
\begin{pmatrix}
&\bm{\nu}\\
\bm{m}_1&\cdots &\bm{m}_p
\end{pmatrix}
\prod_{j=1}^{p} \left[ \frac{1}{2} \right]_{|\bm{m}_j|} 
\notag\\
&=
 \sum_{k_1=1}^{|\bm{\nu}|-1} \sum_{\substack{\sum_{j=1}^p \bm{m}_j=\bm{\nu} \\ 
0\neq \bm{m}_j<\bm{\nu} \\ |\bm{m}_1|=k_1}}
\begin{pmatrix}
&\bm{\nu}\\
\bm{m}_1&\cdots &\bm{m}_p
\end{pmatrix}
\prod_{j=1}^{p} \left[ \frac{1}{2} \right]_{|\bm{m}_j|}\notag\\
&=
\sum_{k_1=1}^{|\bm{\nu}|-1} \sum_{k_2=1}^{|\bm{\nu}|-k_1-1} \sum_{\substack{\sum_{j=1}^p \bm{m}_j=\bm{\nu} \\ 
0\neq \bm{m}_j<\bm{\nu} \\ |\bm{m}_1|=k_1, 
|\bm{m}_2|=k_2}}
\begin{pmatrix}
&\bm{\nu}\\
\bm{m}_1&\cdots &\bm{m}_p
\end{pmatrix}
\prod_{j=1}^{p} \left[ \frac{1}{2} \right]_{|\bm{m}_j|}\notag\\
&=\cdots\notag\\
&=
\sum_{k_1=1}^{|\bm{\nu}|-1}
 \sum_{k_2=1}^{|\bm{\nu}|-k_1-1}
 \cdots
 \sum_{k_{p-1}=1}^{|\bm{\nu}|-\sum_{i=1}^{p-2}k_i-1}
 \prod_{j=1}^{p} \left[ \frac{1}{2} \right]_{k_j}
  \sum_{\substack{\sum_{j=1}^p \bm{m}_j=\bm{\nu} \\ 
0\neq \bm{m}_j<\bm{\nu} \\ |\bm{m}_j|=k_j, 1\leq j\leq p-1}}
\begin{pmatrix}
&\bm{\nu}\\
\bm{m}_1&\cdots &\bm{m}_p
\end{pmatrix},
\end{align}
where we used a special notation  ${k}_p:=|\bm{\nu}|-\sum_{i=1}^{p-1}k_i$ in the product factor for our convenience of indexing.
And, by Theorem 2 in \cite{chu}, we have
\begin{align}
\label{combi2}
\sum_{\substack{\sum_{j=1}^p \bm{m}_j=\bm{\nu} \\ 
0\neq \bm{m}_j<\bm{\nu} \\ |\bm{m}_j|=k_j, 1\leq j\leq p-1}}
\begin{pmatrix}
&\bm{\nu}\\
\bm{m}_1&\cdots &\bm{m}_p
\end{pmatrix}= 
\begin{pmatrix}
&|\bm{\nu}|\\
k_1 &\cdots &|\bm{\nu}|-\sum_{i=1}^{p-1}k_i
\end{pmatrix}.
\end{align}
Then, using the notation for $k_p$ defined above, the right hand side of \eqref{combi2} can be further computed to 
\begin{align}
\label{combi3}
\begin{pmatrix}
&|\bm{\nu}|\notag\\
k_1 &\cdots &k_p
\end{pmatrix} 
&
= 
\frac{|\bm{\nu}|!}{k_1! \cdots k_p!}\\
&
=
\frac{|\bm{\nu}|!}{k_1!\cdots k_{p-2}!(k_{p-1}+k_p)!}\frac{(k_{p-1}+k_p)!}{k_{p-1}! k_p!}
\notag\\
&
=
\frac{|\bm{\nu}|!}{k_1!\cdots k_{p-2}!(k_{p-2}+k_{p-1}+k_p)!}
\frac{(k_{p-2}+k_{p-1}+k_p)!}{(k_{p-1}+k_p)!k_{p-2}!}
\frac{(k_{p-1}+k_p)!}{k_{p-1}! k_p!}
\notag\\
&=\cdots
\notag\\
&=
\prod_{j=1}^{p-1} 
\begin{pmatrix}
\sum_{i=j}^{p} k_i
\\
k_j
\end{pmatrix}
\end{align}
Recall that $|\bm{\nu}|-\sum_{i=1}^{p-2}k_i=k_p+k_{p-1}$, and, in general, we have $|\bm{\nu}|-\sum_{i=1}^{p-\ell}k_i=\sum_{j=p-\ell+1}^p k_j$.  Then, using this identity and combining \eqref{combi2} and \eqref{combi3}, we obtain, from \eqref{combi1}, 
\begin{align}
\label{combi4}
&\sum_{\substack{\sum_{j=1}^p \bm{m}_j=\bm{\nu} \\ 
0\neq \bm{m}_j<\bm{\nu}}}
\begin{pmatrix}
&\bm{\nu}\\
\bm{m}_1&\cdots &\bm{m}_p
\end{pmatrix}
\prod_{j=1}^{p} \left[ \frac{1}{2} \right]_{|\bm{m}_j|} 
\notag
\\
&
=
\sum_{k_1=1}^{|\bm{\nu}|-1}
\begin{pmatrix}
\sum_{i=1}^{p}k_i\\
k_1
\end{pmatrix}
\cdots
\sum_{k_{p-1}=1}^{|\bm{\nu}|-\sum_{i=1}^{p-2}k_i-1} 
\begin{pmatrix}
k_{p-1}+k_p\\
k_{p-1}
\end{pmatrix}
\prod_{j=1}^{p} \left[ \frac{1}{2} \right]_{k_j}
\notag\\
&
=
\sum_{k_1=1}^{|\bm{\nu}|-1}
\begin{pmatrix}
|\bm{\nu}|\\
k_1
\end{pmatrix}
\sum_{k_2=1}^{|\bm{\nu}|-k_1-1}
\begin{pmatrix}
|\bm{\nu}|-k_1\\
k_2
\end{pmatrix}
\cdots
\sum_{k_{p-1}=1}^{|\bm{\nu}|-\sum_{i=1}^{p-2}k_i-1} 
\begin{pmatrix}
|\bm{\nu}|-\sum_{i=1}^{p-2}k_i\\
k_{p-1}
\end{pmatrix}
\prod_{j=1}^{p} \left[ \frac{1}{2} \right]_{k_j}.
\end{align}
 Now, observe that, the last sum of \eqref{combi4} can be computed as
 \begin{align}
& \sum_{k_{p-1}=1}^{|\bm{\nu}|-\sum_{i=1}^{p-2}k_i-1} 
\begin{pmatrix}
|\bm{\nu}|-\sum_{i=1}^{p-2}k_i\\
k_{p-1}
\end{pmatrix}
\prod_{j=1}^{p} \left[ \frac{1}{2} \right]_{k_j}
\notag\\
&=
\prod_{j=1}^{p-2} \left[ \frac{1}{2} \right]_{k_j}
 \sum_{k_{p-1}=1}^{|\bm{\nu}|-\sum_{i=1}^{p-2}k_i-1} 
\begin{pmatrix}
|\bm{\nu}|-\sum_{i=1}^{p-2}k_i\\
k_{p-1}
\end{pmatrix}
 \left[ \frac{1}{2} \right]_{k_{p-1}}
 \left[ \frac{1}{2} \right]_{k_{p}}
\notag\\
&=
\prod_{j=1}^{p-2} \left[ \frac{1}{2} \right]_{k_j}
 \sum_{k_{p-1}=1}^{|\bm{\nu}|-\sum_{i=1}^{p-2}k_i-1} 
\begin{pmatrix}
|\bm{\nu}|-\sum_{i=1}^{p-2}k_i\\
k_{p-1}
\end{pmatrix}
 \left[ \frac{1}{2} \right]_{k_{p-1}}
 \left[ \frac{1}{2} \right]_{|\bm{\nu}|-\sum_{i=1}^{p-2}k_i - k_{p-1}  }
\notag \\
 &=
 \left(\prod_{j=1}^{p-2} \left[ \frac{1}{2} \right]_{k_j}\right)
 2  \left[ \frac{1}{2} \right]_{|\bm{\nu}|-\sum_{i=1}^{p-2}k_i},  
 \end{align}
where the second equality is by the definition of $k_p$ and the last equality is by  \eqref{fall_ineq1}. Similarly, we can repeatedly apply \eqref{fall_ineq1} to \eqref{combi4} until all summation disappears. Then we obtain the desired expression.
\end{proof}
\bigskip

 Now, we are ready to prove the main theorem.
\\

\begin{theorem}
\label{main}
Suppose that $(u(\bm{y}),\lambda(\bm{y}))$ is the ground eigenpair of \eqref{semi}. Then for any multi-index $\bm{\nu}\in\mathcal{F} $ and parameter $\textbf{y}\in U$, the mixed derivatives of the ground eigenpair satisfy the following bounds:
\begin{align}
| \partial^{\bm{\nu}}\lambda (\bm{y})  |&\leq \overline{\lambda}(|\bm{\nu}|!) \bm{\beta^\nu},\\
\left\|  \partial^{\bm{\nu}}u(\bm{y}) \right\|_{H_0^1} &\leq \overline{u} (|\bm{\nu}|!)\bm{\beta^\nu},
\end{align}
where $\bm{\beta}=(\beta_i)_{i=1}^{\infty}$ with $\beta_i=C_{\bm{\beta}}\|b_i\|_{L^\infty}$ and   $C_{\bm{\beta}}>0$ is finite but large enough real number so that 
\begin{align}
\label{cb1}
\begin{split}
&\max\left\{
{
\frac{\tilde{C}_\lambda}{{C_{\bm{\beta}}}\overline{\lambda}},
\frac{\tilde{C}_u}{{C_{\bm{\beta}}}}  ,
 \frac{ \tilde{C}_{u'}}{{C_{\bm{\beta}}}\overline{u}} ,\frac{\tilde{C}_N}{{C_{\bm{\beta}}}} }
 \right\}
 \leq
  1
   \end{split} 
\end{align}
where {$\tilde{C}_\lambda={2}\left( \frac{(p-1)\eta C_{\mathcal{A}}^p\overline{u}^p}{C_T}+1 \right)$, $\tilde{C}_u={2}{C_T}$,  $\tilde{C}_{u'}=\frac{2C_{\mathcal{A}}}{a_{min}}\left(   3+\frac{2(p-1)\eta C_{\mathcal{A}}^p\overline{u}^p+\overline{\lambda}}{C_T} \right)$} and 
\begin{align}
\label{cn}
 {\tilde{C}_N=\frac{1}{C_T}
 \left(
2 \tilde{C}_\lambda  
+
\frac{5}{2}
+
\frac{\eta C_{\mathcal{A}}^p}{\tilde{C}_u}  \left[ \sum_{n=0}^{p-2}
{\begin{pmatrix}
p\\
n
\end{pmatrix}}
\overline{u}^n \tilde{C}_{u'}^{p-n}
2^{p-n-1}\right]
+ 
\frac{p-1}{2\tilde{C}_u}\eta C_{\mathcal{A}}^{p+2} \overline{u}^p
2 \tilde{C}_{u'}^2
  \right).}
\end{align}
{ Note that, by the Sobolev embedding theorem, we have $H_0^1\hookrightarrow L^{2p}$ with for all $(d,p)\in \mathcal{A}$.
  For simplicity, we denote {$C_{\mathcal{A}}>0$} to be common constant such that {$\|u\|_{L^r}\leq C_{\mathcal{A}}\|u\|_{H_0^1}$} for any $u\in H_0^1$ and $r\in \{2,2p\}$(just choose biggest one among them).} For convenience, we use $\tilde{C}_\lambda = C_{\bm{\beta}} C_\lambda$, $\tilde{C}_u=C_{\bm{\beta}}C_u$ and $\tilde{C}_{u'}=C_{\bm{
\beta}}C_{u'}$. 
\end{theorem}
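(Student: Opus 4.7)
The plan is a simultaneous strong induction on $|\bm{\nu}|$, controlling $|\partial^{\bm{\nu}}\lambda|$, the $L^2$--pairing $|\langle u,\partial^{\bm{\nu}}u\rangle|$, and the $H_0^1$--norm $\|\partial^{\bm{\nu}}u\|_{H_0^1}$ side by side. The base case $\bm{\nu}=0$ is exactly Lemma \ref{ubound} (by the very definitions of $\overline{\lambda}$ and $\overline{u}$). For the inductive step I fix $\bm{\nu}\in\mathcal{F}$ with $|\bm{\nu}|\geq 1$ and assume the claimed bounds hold for every $\bm{m}<\bm{\nu}$.

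Next I differentiate the PDE \eqref{semi}. Since $b(\bm{y})$ is affine in $\bm{y}$, Leibniz yields $\partial^{\bm{\nu}}(bu)=b\partial^{\bm{\nu}}u+\sum_i\nu_ib_i\partial^{\bm{\nu}-\bm{e}_i}u$; Lemma \ref{gp} handles $\partial^{\bm{\nu}}(u^p)=pu^{p-1}\partial^{\bm{\nu}}u+\Gamma(p,\bm{\nu})$; and $\partial^{\bm{\nu}}(\lambda u)$ splits into $\lambda\partial^{\bm{\nu}}u+(\partial^{\bm{\nu}}\lambda)u+\sum_{0<\bm{m}<\bm{\nu}}\binom{\bm{\nu}}{\bm{m}}\partial^{\bm{m}}\lambda\,\partial^{\bm{\nu}-\bm{m}}u$. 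Collecting the top--order unknowns on the left produces the identity
\begin{align}
(\mathcal{T}(\bm{y})-\lambda(\bm{y}))\,\partial^{\bm{\nu}}u \;=\; (\partial^{\bm{\nu}}\lambda)u + R_{\bm{\nu}},
\end{align}
where $R_{\bm{\nu}}$ collects the lower-order Leibniz terms in $b$, $\lambda$ and the nonlinearity $-\eta\,\Gamma(p,\bm{\nu})$. Differentiating the normalization $\|u\|_{L^2}^2=1$ also yields an explicit identity for $\langle u,\partial^{\bm{\nu}}u\rangle$ as a sum of $\langle\partial^{\bm{m}}u,\partial^{\bm{\nu}-\bm{m}}u\rangle$ with $0<\bm{m}<\bm{\nu}$, which by the inductive hypothesis and Sobolev embedding is controlled by $\frac{1}{2}\sum_{0<\bm{m}<\bm{\nu}}\binom{\bm{\nu}}{\bm{m}}C_{\mathcal{A}}^2\overline{u}^2(|\bm{m}|!)(|\bm{\nu}-\bm{m}|!)\bm{\beta}^{\bm{\nu}}$ and, by the second inequality of \eqref{fall_ineq} and \eqref{fall_ineq1}, by a universal constant times $(|\bm{\nu}|!)\bm{\beta}^{\bm{\nu}}$.

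From the key identity I extract $\partial^{\bm{\nu}}\lambda$ by pairing against $u$: since $\mathcal{T}$ is self-adjoint, $\mathcal{O}u=\lambda u$ and $\|u\|_{L^2}=1$,
\begin{align}
\partial^{\bm{\nu}}\lambda \;=\; (p-1)\eta\!\int u^{p}\,\partial^{\bm{\nu}}u \;-\; \langle R_{\bm{\nu}},u\rangle.
\end{align}
The first term is handled by H\"older and $L^{2p}\hookleftarrow H_0^1$ at cost $C_{\mathcal{A}}^p\overline{u}^{p-1}\|\partial^{\bm{\nu}}u\|_{H_0^1}$, producing the $\frac{(p-1)\eta C_{\mathcal{A}}^p\overline{u}^p}{C_T}$ contribution to $\tilde{C}_\lambda$; the second is bounded termwise by the inductive hypothesis together with the combinatorial Lemma \ref{ggcombi} and the falling factorial inequalities, giving the remaining pieces of $\tilde{C}_\lambda$. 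To bound $\partial^{\bm{\nu}}u$ itself I decompose $\partial^{\bm{\nu}}u=\langle u,\partial^{\bm{\nu}}u\rangle u+w$ with $w\in\{u\}^\perp$; the parallel part is already estimated, and for $w$ I exploit that by Lemma \ref{udiff} the operator $\mathcal{T}(\bm{y})-\lambda(\bm{y})$ is a uniform isomorphism on $\{u\}^\perp\subset H_0^1$ with bound $1/C_T$, so I apply it to the projected identity to obtain $\|w\|_{H_0^1}\leq C_T^{-1}(\|(\partial^{\bm{\nu}}\lambda)u\|_{H^{-1}}+\|R_{\bm{\nu}}\|_{H^{-1}})$, and then bound every piece of $R_{\bm{\nu}}$ by the inductive hypothesis. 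This is how $\tilde{C}_u$ (from the parallel $L^2$ piece together with $C_T$) and $\tilde{C}_{u'}$ (from the $H_0^1$ piece, picking up $a_{\min}^{-1}$ through the Poincar\'e/coercivity constant encoded in $C_{\mathcal{A}}$) arise.

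The main obstacle is the nonlinear term $\eta\,\Gamma(p,\bm{\nu})$: it is a nested sum of products of up to $p$ lower-order derivatives of $u$, and naively the Leibniz coefficients interact badly with the $(|\bm{m}|!)$ factors from the induction hypothesis. This is exactly where Lemma \ref{ggcombi} is essential. Inserting the bound $\|\partial^{\bm{m}_j}u\|_{H_0^1}\leq\overline{u}(|\bm{m}_j|!)\bm{\beta}^{\bm{m}_j}$, using $L^{2p}\hookleftarrow H_0^1$ on each factor, and applying \eqref{fall_ineq} to replace each $(|\bm{m}_j|!)$ by at most $2^{|\bm{m}_j|+1}[\tfrac12]_{|\bm{m}_j|}$, reduces the nested sum to exactly the quantity bounded in Lemma \ref{ggcombi}, producing a factor $2^{p-1}[\tfrac12]_{|\bm{\nu}|}$ which is $\leq 2^{p-1}(|\bm{\nu}|!)$; this is where the factor $\sum_{n=0}^{p-2}\binom{p}{n}\overline{u}^n\tilde{C}_{u'}^{p-n}2^{p-n-1}$ in \eqref{cn} originates. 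Collecting all contributions, the condition \eqref{cb1} is precisely the smallness condition on $1/C_{\bm{\beta}}$ that lets one absorb the new constants and close the induction with the stated $\overline{\lambda}(|\bm{\nu}|!)\bm{\beta}^{\bm{\nu}}$ and $\overline{u}(|\bm{\nu}|!)\bm{\beta}^{\bm{\nu}}$ bounds.
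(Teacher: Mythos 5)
Your overall architecture matches the paper's: induction on $\bm{\nu}$, differentiating \eqref{semi} and isolating $(\mathcal{T}-\lambda)\partial^{\bm{\nu}}u$, Lemma \ref{gp} for the nonlinearity, Lemma \ref{ggcombi} and the falling-factorial inequalities for the combinatorics, the normalization identity for $\langle u,\partial^{\bm{\nu}}u\rangle$, and the splitting of $\partial^{\bm{\nu}}u$ into its component along $u$ plus an orthogonal remainder. The gap is in how you close the induction for the orthogonal part. You assert that Lemma \ref{udiff} makes $\mathcal{T}(\bm{y})-\lambda(\bm{y})$ a uniform isomorphism on $\{u\}^{\perp}\subset H_0^1$ with inverse bound $1/C_T$, and deduce $\|w\|_{H_0^1}\le C_T^{-1}(\|(\partial^{\bm{\nu}}\lambda)u\|_{H^{-1}}+\|R_{\bm{\nu}}\|_{H^{-1}})$. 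But Lemma \ref{udiff} only gives the spectral gap $\lambda_T-\lambda\ge C_T$, hence the $L^2$-coercivity $\langle(\mathcal{T}-\lambda)v,v\rangle\ge C_T\|v\|_{L^2}^2$; it does not give a $\bm{y}$-uniform $H_0^1\to H^{-1}$ inverse bound, let alone one with constant $1/C_T$. Moreover $\{u\}^{\perp}$ is spanned by eigenfunctions of $\mathcal{O}$, not of $\mathcal{T}$, so it is not invariant under $\mathcal{T}-\lambda$ and the ``projected identity'' picks up a cross term (the $(p-1)\eta\langle\partial^{\bm{\nu}}u,u\rangle\langle u^p,v\rangle$ appearing in \eqref{lhs}) that your sketch drops. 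The paper flags exactly this obstruction in the discussion after the theorem: one cannot see that $C\|v\|_{H_0^1}^2\le\langle(\mathcal{T}-\lambda)v,v\rangle$ on this subspace, which is why the linear-case argument of \cite{alex2019,alexey} does not transfer to the nonlinear setting.

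The fix the paper uses, and which you would need, is to run the induction in the order $L^2\to\lambda\to H_0^1$: first bound $\|v\|_{L^2}$ (hence $\|\partial^{\bm{\nu}}u\|_{L^2}$) from the $L^2$-coercivity applied to \eqref{nuderi2}; then bound $\partial^{\bm{\nu}}\lambda$ by pairing with $u$, estimating $(p-1)\eta\int u^{p}\partial^{\bm{\nu}}u$ by $\|u\|_{L^{2p}}^{p}\|\partial^{\bm{\nu}}u\|_{L^2}$ with the just-obtained $L^2$ bound; and only then bound $\|\partial^{\bm{\nu}}u\|_{H_0^1}$ via $a_{\min}\|\partial^{\bm{\nu}}u\|_{H_0^1}^2\le\langle\mathcal{T}\partial^{\bm{\nu}}u,\partial^{\bm{\nu}}u\rangle$ together with the differentiated equation. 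As written, your ordering is also circular: your estimate for $\partial^{\bm{\nu}}\lambda$ costs $C_{\mathcal{A}}^{p}\overline{u}^{p-1}\|\partial^{\bm{\nu}}u\|_{H_0^1}$, a top-order quantity you have not yet controlled, while your bound on $w$ in turn requires $\partial^{\bm{\nu}}\lambda$. This missing intermediate $L^2$ stage is precisely why the theorem carries a separate constant $\tilde{C}_u$ (and the auxiliary induction hypothesis \eqref{rel2}) that your scheme never produces.
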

Note that $\tilde{C}_\lambda$, $\tilde{C}_u$ and $\tilde{C}_{u'}$ are independent of $C_{\bm{\beta}}$ by the definition.
\bigskip 

\begin{proof} For the simplicity of our argument, let us drop the notation for the dependence on $\bm{y}$ so that $u=u(\bm{y})$ and $\lambda= \lambda(\bm{y})$ {as it will be clear from the context.}  We will show our result by mathematical induction. Note that the case of $\bm{\nu}=0$ is clear. Thus, we only focus on the case of $\bm{\nu}\neq 0$.

First, we claim that 
\begin{align}
\label{rel}
| \partial^{\bm{\nu}}\lambda  |
&\leq C_\lambda \left[\frac{1}{2} \right]_{|\bm{\nu}|} \bm{\beta^\nu},
\\
\label{rel2}
\left\|\partial^{\bm{\nu}} u  \right\|_{L^2}
 &\leq  C_u \left[\frac{1}{2} \right]_{|\bm{\nu}|}  \bm{\beta^\nu},
 \\
\label{reh1}
\left\|  \partial^{\bm{\nu}}u \right\|_{H_0^1} 
&\leq C_{u'} \left[\frac{1}{2} \right]_{|\bm{\nu}|}  \bm{\beta^\nu}.
\end{align}
 We will prove this claim by mathematical induction. We will also use the notation from \eqref{o} and \eqref{t} accordingly.

We start by observing that, by Theorem \ref{anal}, we can take mixed derivatives of the equation:
\begin{align}
\partial^{\bm{\nu}} \left( -\nabla\cdot(a\nabla) u+b(\bm{y})u +\eta u^p  \right) = \partial^{\bm{\nu}}\left( \lambda u \right).
\end{align}
Taking derivatives inside the parentheses and playing some algebraic manipulations, we have
\begin{align}
\label{deri}
&(-\nabla\cdot(a\nabla) +b(\bm{y}) +p\eta u^{p-1}  )\partial^{\bm{\nu}} u  
+\sum_{i=1}^{\infty}\nu_i b_i \partial^{\bm{\nu}-\bm{e}_i} u
+\eta\Gamma(p,\bm{\nu})
\notag\\
&= \partial^{\bm{\nu}}\lambda u +\lambda \partial^{\bm{\nu}}u +\sum_{0\neq \bm{m}<\bm{\nu}}
\begin{pmatrix}
\bm{\nu}\\
\bm{m}
\end{pmatrix}
\partial^{\bm{m}}\lambda \partial^{\bm{\nu}-\bm{m}} u.
\end{align}
For simplicity and convenience, we use the following notations:
\begin{align}
\label{lower}
&q=\sum_{i=1}^{\infty}\nu_i b_i \partial^{\bm{\nu}-\bm{e}_i}u, \\
&e=\sum_{0\neq \bm{m}<\bm{\nu}}
\begin{pmatrix}
\bm{\nu}\\
\bm{m}
\end{pmatrix}
\partial^{\bm{m}}\lambda \partial^{\bm{\nu}-\bm{m}}u.
\end{align}
Correspondingly, we define
\begin{align}
\label{cap}
&Q=\sum_{i=1}^{\infty} \nu_i \|b_i\|_{L^\infty} \left\| \partial^{\bm{\nu}-\bm{e}_i}u  \right\|_{L^2},\\
&E=\sum_{0\neq \bm{m}<\bm{\nu}}
\begin{pmatrix}
\bm{\nu}\\
\bm{m}
\end{pmatrix}
| \partial^{\bm{m}}\lambda |
\left\| \partial^{\bm{\nu}-\bm{m}} u\right\|_{L^2},\\
&D=\sum_{0\neq \bm{m}<\bm{\nu}} 
\begin{pmatrix}
\bm{\nu}\\
\bm{m}
\end{pmatrix}
\left\| \partial^{\bm{m}}u  \right\|_{H_0^1} \left\| \partial^{\bm{\nu}-\bm{m}}u \right\|_{H_0^1},
\\
&G = \sum_{n=0}^{p-2}
{\begin{pmatrix}
p\\
n
\end{pmatrix}}
\|u\|_{H_0^1}^n
\sum_{\substack{\sum_{j=1}^{p-n} \bm{m}_j=\bm{\nu} \\ 
0\neq \bm{m}_j<\bm{\nu}}}
\begin{pmatrix}
&\bm{\nu}\\
\bm{m}_1&\cdots &\bm{m}_{p-n}
\end{pmatrix}
\prod_{j=1}^{p-n} \|\partial^{\bm{m}_i}u\|_{H_0^1}.
\end{align}
To be specific, note that $q\leq Q$, $e\leq E$, and $\Gamma(p,\bm{\nu})\leq G$. $D$ will appear in \eqref{start}. Now, we start by estimating the first derivatives. Observe that, from \eqref{deri}, when $\bm{\nu}=\bm{e}_i$ for some $i\geq 1$,
\begin{equation}
\label{deri1}
(\mathcal{T}-\lambda)\partial^{\bm{e_i}} u  + b_iu = \partial^{\bm{e_i}}\lambda u 
\end{equation}
By multiplying $\partial^{\bm{e}_i}u$ on both sides and integrating them, we obtain
\begin{align}
\label{edit1}
\left< (\mathcal{T}-\lambda)\partial^{\bm{e}_i}u,\partial^{\bm{e}_i}u  \right> =-\left< b_i u,\partial^{\bm{e}_i}u \right>,
\end{align}
where we used the following property:
\begin{align}
0=\partial^{e_i}\left( \int u^2 \right) =2\int u\partial^{e_i} u=2\left<u,\partial^{e_i}u \right>. 
\end{align}
Then, it is clear that 
\begin{align}
\label{reason1}
(\lambda_T-\lambda) \|\partial^{\bm{e}_i}u\|_{L^2}^2\leq \left\|b_i \right\|_{L^\infty} \left\| \partial^{e_i}u\right\|_{L^2} = \frac{\beta_i}{C_{\bm{\beta}}}   \left\| \partial^{\bm{e}_i}u\right\|_{L^2}  
\end{align}
which implies that
\begin{align}
\label{ul21}
 \left\| \partial^{\bm{e}_i}u\right\|_{L^2} \leq \frac{\beta_i}{C_TC_{\bm{\beta}}}=C_u \left[\frac{1}{2} \right]_{1} \bm{\beta}^{\bm{e}_i},
\end{align}
where $C_T$ is from Lemma \ref{udiff}. Now, multiplying $u$ on both sides of \eqref{deri1} and {using the fact that $\mathcal{O}$ is a symmetric operator,} observe that
\begin{align}
\partial^{\bm{e}_i}\lambda 
= \left< \partial^{\bm{e}_i}u ,(\mathcal{O}-\lambda)u\right> +\left< b_i u,u \right> +\left< (p-1)\eta u^p,\partial^{\bm{e}_i}u\right>.
\end{align}
Since $u$ is the eigenfunction of $\mathcal{O}$ corresponding to the eigenvalue $\lambda$,  by H\"older's inequality, we obtain
\begin{align}
|\partial^{\bm{e}_i}\lambda|\leq \frac{\beta_i}{C_{\bm{\beta}}} + (p-1)\eta  \|u\|_{L^{2p}}^p \|\partial^{\bm{e}_i}u\|_{L^2}.
\end{align}
Thus,  by using \eqref{ul21},
\begin{align}
\label{l1bound}
|\partial^{\bm{e}_i}\lambda|\leq \frac{\beta_i}{C_{\bm{\beta}}} + (p-1)\eta C^p\overline{u}^p \frac{\beta_i}{C_T C_{\bm{\beta}}} = \frac{1}{C_{\bm{\beta}}} \left( 1+\frac{(p-1)\eta C^p\overline{u}^p}{C_T} \right)\beta_i= C_\lambda \left[\frac{1}{2} \right]_{1} \bm{\beta}^{\bm{e}_i}.
\end{align}
 From \eqref{edit1}, note that
 \begin{align}
 a_{min}\int |\nabla \partial^{\bm{e}_i}u|^2 
 &\leq \left< \partial^{\bm{e}_i}u, \mathcal{T} \partial^{\bm{e}_i}u\right>  \notag\\
 &=\lambda \left<\partial^{\bm{e}_i}u,\partial^{\bm{e}_i}u\right>
 -\left<b_iu,\partial^{\bm{e}_i}u  \right>  ,
 \end{align}
which implies, by \eqref{l1bound}, 
\begin{align}
\|\partial^{\bm{e}_i}u\|_{H_0^1}^2 
&\leq \frac{1}{a_{min}}\left( \overline{\lambda} \|\partial^{\bm{e}_i}u\|_{L^2}^2 +\|b_i\|_{L^\infty}\|\partial^{\bm{e}_i}u\|_{L^2}\right)
\notag\\
&\leq {\frac{C_{\mathcal{A}}\left\|\partial^{\bm{e}_i}u  \right\|_{H_0^1 }}{a_{min}}}
 \left( \overline{\lambda} \|\partial^{\bm{e}_i}u\|_{L^2}  +\|b_i\|_{L^\infty}  \right).
\end{align}
Then, by dividing both sides by $\|\partial^{e_i} u\|_{H_0^1}$ and using \eqref{ul21}, we get
\begin{align}
\label{uh11}
\|\partial^{\bm{e}_i}u\|_{H_0^1}
&\leq 
\frac{{C_{\mathcal{A}}}}{a_{min}}\left( \overline{\lambda} \frac{\beta_i}{C_TC_{\bm{\beta}}}+\frac{\beta_i}{C_{\bm{\beta}}}  \right) \notag\\
&= 
\frac{{C_{\mathcal{A}}}}{a_{min}C_{\bm{\beta}}}\left( \frac{\overline{\lambda}}{C_T}+1\right)\beta_i \notag\\
&\leq
 C_{u'} \left[\frac{1}{2} \right]_{1} \bm{\beta}^{\bm{e}_i}.
\end{align}
Now, let a multi-index $\bm{\nu}\in \mathcal{F}$ be given and  assume that \eqref{rel}, \eqref{rel2}, and \eqref{reh1} are true for all $\bm{\mu}<\bm{\nu}$. Under these assumptions, let us estimate $Q, G, D$, and $E$, which are from \eqref{cap}. Observe that, applying Lemma \ref{ggcombi}, we have the following estimations 
\begin{align}
\label{Q}
Q&=
\sum_{i=1}^{\infty} \nu_i \|b_i\|_{L^\infty} \left\| \partial^{\bm{\nu}-\bm{e}_i}u  \right\|_{L^2} 
\leq
 \sum_{i=1}^{\infty} \nu_i \frac{\beta_i}{C_{\bm{\beta}}} C_u\left[ \frac{1}{2} \right]_{|\bm{\nu}-\bm{e}_i|}\bm{\beta}^{\bm{\nu}-\bm{e}_i}
 =
\frac{C_u\bm{\beta}^{\bm{\nu}}}{C_{\bm{\beta}}}\left[ \frac{1}{2} \right]_{|\bm{\nu}|-1}\sum_{i=1}^{\infty} \nu_i \notag\\
&=\frac{C_u\bm{\beta}^{\bm{\nu}}}{C_{\bm{\beta}}}\left[ \frac{1}{2} \right]_{|\bm{\nu}|-1} |\bm{\nu}|
=
\frac{C_u\bm{\beta}^{\bm{\nu}}}{C_{\bm{\beta}}}\left[ \frac{1}{2} \right]_{|\bm{\nu}|-1} {\left( |\bm{\nu}|-1-\frac{1}{2} +\frac{3}{2} \right)}
\notag
\\ 
&\leq
\frac{C_u\bm{\beta}^{\bm{\nu}}}{C_{\bm{\beta}}}\left[\frac{1}{2} \right]_{|\bm{\nu}|-1}{\left(\left|\frac{1}{2}-\left(|\bm{\nu}|-1\right)\right|+\frac{3}{2}  \right)}
\notag\\
&\leq
{
\frac{C_u\bm{\beta}^{\bm{\nu}}}{C_{\bm{\beta}}}\left[\frac{1}{2} \right]_{|\bm{\nu}|-1}\left(\left|\frac{1}{2}-\left(|\bm{\nu}|-1\right)\right|+3 \left|\frac{1}{2}-\left(|\bm{\nu}|-1\right)\right| \right)}
\leq
 {\frac{4C_u}{C_{\bm{\beta}}}}\left[\frac{1}{2} \right]_{|\bm{\nu}|}\bm{\beta}^{\bm{\nu}} ,
\\
\label{G}
G &= \sum_{n=0}^{p-2}
{\begin{pmatrix}
p\\
n
\end{pmatrix}}
\|u\|_{H_0^1}^n
\sum_{\substack{\sum_{j=1}^{p-n} \bm{m}_j=\bm{\nu} \\ 
0\neq \bm{m}_j<\bm{\nu}}}
\begin{pmatrix}
&\bm{\nu}\\
\bm{m}_1&\cdots &\bm{m}_{p-n}
\end{pmatrix}
\prod_{j=1}^{p-n} \|\partial^{\bm{m}_i}u\|_{H_0^1}
\notag\\
&\leq  \sum_{n=0}^{p-2}
{\begin{pmatrix}
p\\
n
\end{pmatrix}}
\overline{u}^n
\sum_{\substack{\sum_{j=1}^{p-n} \bm{m}_j=\bm{\nu} \\ 
0\neq \bm{m}_j<\bm{\nu}}}
\begin{pmatrix}
&\bm{\nu}\\
\bm{m}_1&\cdots &\bm{m}_{p-n}
\end{pmatrix}
\prod_{j=1}^{p-n} C_{u'} \left[ \frac{1}{2}\right]_{|\bm{m}_j|}\bm{\beta}^{\bm{m}_j}
\notag\\
&\leq  \left( \sum_{n=0}^{p-2}
\textcolor{blue}{\begin{pmatrix}
p\\
n
\end{pmatrix}}
\overline{u}^n C_{u'}^{p-n}
2^{p-n-1}\right)
\left[\frac{1}{2} \right]_{|\bm{\nu}|}
\bm{\beta}^{\bm{\nu}},\\
\label{D}
D
&=\sum_{0\neq \bm{m}<\bm{\nu}} 
\begin{pmatrix}
\bm{\nu}\\
\bm{m}
\end{pmatrix}
\left\| \partial^{\bm{m}}u  \right\|_{H_0^1} \left\| \partial^{\bm{\nu}-\bm{m}}u \right\|_{H_0^1}
\notag\\
&\leq  \sum_{0\neq \bm{m}<\bm{\nu}} 
\begin{pmatrix}
\bm{\nu}\\
\bm{m}
\end{pmatrix}
C_{u'}^2
\left[ \frac{1}{2} \right]_{|\bm{\nu}-\bm{m}|}
\left[ \frac{1}{2} \right]_{|\bm{m}|}
\bm{\beta}^{\bm{\nu}}
\leq 2 C_{u'}^2 
\left[\frac{1}{2} \right]_{|\bm{\nu}|}
\bm{\beta}^{\bm{\nu}},\\
\label{E}
E&=\sum_{0\neq \bm{m}<\bm{\nu}}
\begin{pmatrix}
\bm{\nu}\\
\bm{m}
\end{pmatrix}
| \partial^{\bm{m}}\lambda |
\left\| \partial^{\bm{\nu}-\bm{m}} u\right\|_{L^2}
\notag\\
&\leq \sum_{0\neq \bm{m}<\bm{\nu}}
\begin{pmatrix}
\bm{\nu}\\
\bm{m}
\end{pmatrix}
C_\lambda C_u 
\left[ \frac{1}{2} \right]_{|\bm{\nu}-\bm{m}|}
\left[ \frac{1}{2} \right]_{|\bm{m}|}
 \bm{\beta}^{\bm{\nu}}
\leq 2 C_\lambda C_u 
\left[\frac{1}{2} \right]_{|\bm{\nu}|}
\bm{\beta}^{\bm{\nu}},
\end{align}
{where the last second inequality in \eqref{Q} is because $\left|\frac{1}{2}-\left(|\bm{\nu}|-1\right)\right|\geq 
\frac{1}{2}$ for all $\bm{\nu}\geq 2$.}
 Now, let's show the bound for $\|\partial^{\bm{\nu}} u\|_{L^2}$. Let $\{u_k\}_{k=1}^{\infty}$ be the sequence of eigenfunctions of $\mathcal{O}$ which is chosen to be an orthonormal basis for $L^2$, and it corresponds to the sequence of eigenvalues $0<\lambda_1 \leq \lambda_2 \leq \cdots $. Especially, note that $u_1=u$ and $\lambda_1=\lambda$. As $(u_k)_{k\geq 1}$ is a basis of $L^2$ space, $\partial^{\bm{\nu}}u$ can be represented by 
\begin{align}
\partial^{\nu}u = \sum_{k=1}^{\infty} \left< \partial^{\bm{\nu}}u, u_k  \right> u_k=\left< \partial^{\bm{\nu}}u, u  \right> u  +\sum_{k=2}^{\infty} \left< \partial^{\bm{\nu}}u, u_k  \right> u_k.
\end{align}
Let us denote the second term by $v$ for simplicity. In other words,
\begin{align}
\label{vortho}
v:=\sum_{k=2}^{\infty} \left< \partial^{\bm{\nu}}u, u_k  \right> u_k.
\end{align}
 The important fact about $v$ is that it is orthogonal to $u$. Then observe that
\begin{align}
\label{decompositionpu}
\left\| \partial^{\bm{\nu}} u  \right\|_{L^2} \leq |\left<\partial^{\bm{\nu}}u,u \right>| +\|v\|_{L^2}.
\end{align}
As for the first term, note that
\begin{align}
0=\partial^{\bm{\nu}}\left< u,u \right> = \sum_{\bm{m}\leq \bm{\nu}} 
\begin{pmatrix}
\bm{\nu}\\
\bm{m}
\end{pmatrix} \left<  \partial^{\bm{m}}u, \partial^{\bm{\nu}-\bm{m}}u\right>.
\end{align}
Thus, we have
\begin{align}
\left< u,\partial^{\bm{\nu}}u \right> =-\frac{1}{2}\sum_{0\neq \bm{m} < \bm{\nu}} 
\begin{pmatrix}
\bm{\nu}\\
\bm{m}
\end{pmatrix}
\left< \partial^{\bm{m}}u, \partial^{\bm{\nu}-\bm{m}}u\right>,
\end{align}
which implies 
\begin{align}
\label{start}
|\left<\partial^{\bm{\nu}}u,u \right>|  \leq \frac{1}{2}\sum_{0\neq \bm{m} < \bm{\nu}}
\begin{pmatrix}
\bm{\nu}\\
\bm{m}
\end{pmatrix}
 \| \partial^{\bm{m}}u\|_{L^2} \| \partial^{\bm{\nu}-\bm{m}}u\|_{L^2} =\frac{{C_{\mathcal{A}}}^2}{2}D.
\end{align}
Secondly, multiplying $v$ on both sides of \eqref{deri} and after some algebraic manipulations,
\begin{align}
\label{nuderi}
\left< (\mathcal{T}-\lambda)\partial^{\bm{\nu}}u , v \right> =\left<e-q-\eta\Gamma(p,\bm{\nu}),v \right>+\partial^{\bm{\nu}}\lambda \left<u,v \right>= \left<e-q-\eta\Gamma(p,\bm{\nu}),v \right>,
\end{align}
because  $\left<v,u \right>=0$. Observe that the left hand side of \eqref{nuderi} can be expanded as
 \begin{align}
 \label{lhs}
 \left< (\mathcal{T}-\lambda)\partial^{\bm{\nu}}u,v  \right>
 &=\left<(\mathcal{T}-\lambda)v,v \right>+\left<\partial^{\bm{\nu}}u,u \right>\left< (\mathcal{T}-\lambda)u,v  \right>
\notag \\ 
 &=\left<(\mathcal{T}-\lambda)v,v  \right> +(p-1)\eta \left<\partial^{\bm{\nu}}u,u \right> \left<u^p,v \right>, 
 \end{align}
where the second equality is due to the fact that $(\mathcal{\mathcal{O}}-\lambda)u=0$. Thus, combining \eqref{nuderi} and \eqref{lhs}, we have 
\begin{align}
\label{nuderi2}
\left<(\mathcal{T}-\lambda)v,v \right> = \left<e-q-\eta \Gamma(p,\bm{\nu}),v \right> -(p-1)\eta \left<\partial^{\bm{\nu}}u,u \right> \left<u^p,v \right>.
\end{align}
Using the fact that  $C_T\|v\|_{L^2}^2\leq \left<(\mathcal{T}-\lambda)v,v \right> $, H\"older inequality, and the Sobolev embedding theorem with \eqref{start}, we obtain
\begin{align}
C_T \|v\|_{L^2}^2\leq (E+Q+\eta {C_{\mathcal{A}}}^pG )\|v\|_{L^2}+ \frac{p-1}{2}
\eta {C_{\mathcal{A}}}^2D \|u\|_{L^{2p}}^p \|v\|_{L^2}.
\end{align}
By dividing $\|v\|_{L^2}$ from both sides by $C_T$, we have
\begin{align}
\label{vcom}
\|v\|_{L^2} \leq \frac{1}{C_T}\left( E+Q+  \eta {C_{\mathcal{A}}}^p G  +\frac{p-1}{2}\eta {C_{\mathcal{A}}}^{p+2}  \overline{u}^pD \right).
\end{align}
Then, combining \eqref{decompositionpu},  \eqref{start} and \eqref{vcom}, we see 
\begin{align}
\label{l2first}
\left\| \partial^{\bm{\nu}}u \right\|_{L^2} 
&\leq 
 \frac{1}{C_T}\left(  E+Q+  \eta {C_{\mathcal{A}}}^p G  +\frac{{C_{\mathcal{A}}}^2}{2}\left( (p-1)\eta  {C_{\mathcal{A}}}^p\overline{u}^p +{C_T}\right)D \right).
\end{align}
Then using the estimates \eqref{Q}-\eqref{E}, one can easily see the right-hand side of \eqref{l2first} is upper bounded by
\begin{align}
\label{l2pro}
& \frac{C_u }{C_{\bm{\beta}}C_T}
 \left(
2 \tilde{C}_\lambda  
+
{4}
+
{\frac{\eta }{C_{\mathcal{A}}}^p}{\tilde{C}_u}  \left[ \sum_{n=0}^{p-2}
{
\begin{pmatrix}
p\\
n
\end{pmatrix}
}
\overline{u}^n \tilde{C}_{u'}^{p-n}
2^{p-n-1}\right]
+ 
\frac{p-1}{\tilde{C}_u}\eta  {C_{\mathcal{A}}}^{p+2}\overline{u}^p
 \tilde{C}_{u'}^2
  \right) 
  \left[\frac{1}{2} \right]_{|\bm{\nu}|}\bm{\beta}^{\bm{\nu}},
\end{align}
where the tilde nations, $\tilde{C}_\lambda$, $\tilde{C}_u$, and $\tilde{C}_{u'}$ notate $C_{\bm{\beta}}$ independent constants.  Then, by \eqref{cn} and  \eqref{cb1}, we have 
\begin{align}
\label{l2}
\|\partial^{\bm{\nu}}u\|_{L^2} \leq C_N C_u\left[\frac{1}{2} \right]_{|\bm{\nu}|}\bm{\beta}^{\bm{\nu}} \leq C_u  \left[\frac{1}{2} \right]_{|\bm{\nu}|}\bm{\beta}^{\bm{\nu}}. 
\end{align}
Observe that, using the argument above, we have
\begin{align}
\label{usefulpro}
V:=C_TC_N C_u\left[\frac{1}{2} \right]_{|\bm{\nu}|}\bm{\beta}^{\bm{\nu}}\geq \left(  E+Q+  \eta {C_{\mathcal{A}}}^p G  +\frac{{C_{\mathcal{A}}}^2}{2}\left( (p-1)\eta {C_{\mathcal{A}}}^p\overline{u}^p +{C_T}\right)D \right) ,
\end{align}
which is useful for our next estimations. Now, from \eqref{deri}, with some algebraic manipulations, we obtain
\begin{align}
\label{lnu}
\partial^{\bm{\nu}} \lambda u =
\mathcal{T}\partial^{\bm{\nu}} u  
+q
+\eta \Gamma(p,\bm{\nu})-e -\lambda \partial^{\bm{\nu}}u.
\end{align}
Observe that, by making inner product of \eqref{lnu} and $u$, using the fact that $u$ is the eigenfunction of $\mathcal{O}$ with the eigenvalue $\lambda$ and using Cauchy-Schwarz inequality,
\begin{align}
\label{see1}
\partial^{\bm{\nu}} \lambda
&= \left<  (\mathcal{O}-\lambda)\partial^{\bm{\nu}}u,u  \right>
+
(p-1)\eta\left<\partial^{\bm{\nu}}u,u^p \right>
+
\left<q+\eta \Gamma(p,\bm{\nu})-e,u \right>\notag\\
&= \left<  (\mathcal{O}-\lambda)u,\partial^{\bm{\nu}}u  \right>
+
(p-1)\eta\left<\partial^{\bm{\nu}}u,u^p \right>
+
\left<q+\eta \Gamma(p,\bm{\nu})-e,u \right>\notag\\
&\leq (p-1)\eta \left\| u \right\|_{L^{2p}}^p \left\|\partial^{\bm{\nu}}u \right\|_{L^2}+E+Q+\eta {C_{\mathcal{A}}}^pG,
\end{align}
where we used the fact that $(\mathcal{O}-\lambda)$ is symmetric operator in the second equality.
Then, use the result of \eqref{l2} and the definition of $C_\lambda$ to have
\begin{align}
\partial^{\bm{\nu}} \lambda
&\leq 
(p-1)\eta{C_{\mathcal{A}}}^p\overline{u}^p \frac{1}{C_T}\left(  E+Q+  \eta {C_{\mathcal{A}}}^p G  +\frac{{C_{\mathcal{A}}}^2}{2}\left( (p-1)\eta   {C_{\mathcal{A}}}^p\overline{u}^p +{C_T}\right)D \right)
\notag\\
&+E+Q+\eta {C_{\mathcal{A}}}^p G \notag\\
&\leq
 \left( 1+\frac{(p-1)\eta {C_{\mathcal{A}}}^p\overline{u}^p}{C_T} \right)\left(E+Q+  \eta {C_{\mathcal{A}}}^p G  +\frac{{C_{\mathcal{A}}}^2}{2}\left( (p-1)\eta   {C_{\mathcal{A}}}^p\overline{u}^p +{C_T}\right)D \right)\notag\\
&\leq  \frac{C_{\bm{\beta }}{C}_\lambda}{2}  
 \left(  E+Q+  \eta {C_{\mathcal{A}}}^p G  +\frac{{C_{\mathcal{A}}}^2}{2}\left( (p-1)\eta   {C_{\mathcal{A}}}^p\overline{u}^p +{C_T}\right)D \right).
 \end{align}
 Lastly, using the estimated \eqref{usefulpro} and using the definition of $C_u$, we obtain
\begin{align}
\label{lambda}
\partial^{\bm{\nu}} \lambda
\leq \frac{C_{\bm{\beta}}C_\lambda}{2}C_TC_N C_u\left[\frac{1}{2} \right]_{|\bm{\nu}|}\bm{\beta}^{\bm{\nu}}
= C_N C_\lambda \left[\frac{1}{2} \right]_{|\bm{\nu}|}\bm{\beta}^{\bm{\nu}}\leq C_\lambda \left[\frac{1}{2} \right]_{|\bm{\nu}|}\bm{\beta}^{\bm{\nu}}.
\end{align}
 Recall that the last inequality is by \eqref{cb1}. Again, the tilde notations mean $C_{\bm{\beta}}$ independent version of corresponding constants.  Lastly, from \eqref{deri}, observe that
\begin{align}
\|\partial^{\bm{\nu}}u\|_{H_0^1}^2
&\leq \frac{1}{a_{min}}\left<-\nabla\cdot(a\nabla)\partial^{\bm{\nu}} u,\partial^{\bm{\nu}}u\right>\notag\\
& \leq  \frac{1}{a_{min}}\left<\mathcal{T}\partial^{\bm{\nu}}u,\partial^{\bm{\nu}}u \right>\notag\\
& = \frac{1}{a_{min}}\left( \left< e-q-\eta \Gamma(p,\bm{\nu}),\partial^{\bm{\nu}}u \right> +\partial^{\bm{\nu}}\lambda\left<u,\partial^{\bm{\nu}}u \right>+\lambda \left< \partial^{\bm{\nu}}u,\partial^{\bm{\nu}}u \right>\right).
\end{align}
Then, by using Cauchy-Schwarz inequality and the Sobolev embedding,
\begin{align}
\label{see}
\|\partial^{\bm{\nu}}u\|_{H_0^1}\leq \frac{{C_{\mathcal{A}}}}{a_{min}}\left( E+Q+\eta {C_{\mathcal{A}}}^p G  +|\partial^{\bm{\nu}}\lambda| +\lambda \|\partial^{\bm{\nu}}u\|_{L^2}   \right).
\end{align}
Using the previous estimates \eqref{l2} and \eqref{lambda}, 
\begin{align}
\|\partial^{\bm{\nu}}u\|_{H_0^1}
& \leq
  \frac{{C_{\mathcal{A}}}}{a_{min}}
  \left(
   E+Q+\eta
   {C_{\mathcal{A}}}^p G   + \left(  \frac{ C_{\bm{\beta }}{C}_\lambda}{2} + \frac{\overline{\lambda}}{C_T} \right) 
 V \right)\notag\\
 &\leq 
 \frac{{C_{\mathcal{A}}}}{a_{min}}\left(1+C_{\bm{\beta }}{C}_\lambda + \frac{\overline{\lambda}}{C_T} \right) 
 V\notag\\
 &\leq 
 \frac{{C_{\mathcal{A}}}}{a_{min}}\left( 3+\frac{2(p-1)\eta {C_{\mathcal{A}}}^p\overline{u}^p+\overline{\lambda}}{C_T} \right)
V,
\end{align}
 where $V$ is defined in \eqref{usefulpro} {and we used the definition of $C_{\lambda}$ in the last inequality}. Then the definitions of {$V$}, $C_{u}$ and $C_{u'}$ give us
\begin{align}
\|\partial^{\bm{\nu}}u\|_{H_0^1}^2 
\leq \frac{C_{\bm{\beta}}C_{u'}}{2}C_TC_N C_u\left[\frac{1}{2} \right]_{|\bm{\nu}|}\bm{\beta}^{\bm{\nu}} = C_N C_{u'}\left[\frac{1}{2} \right]_{|\bm{\nu}|}\bm{\beta}^{\bm{\nu}}\leq C_{u'}\left[\frac{1}{2} \right]_{|\bm{\nu}|}\bm{\beta}^{\bm{\nu}},
\end{align}
where we used \eqref{cb1} at the last inequality. Therefore, we have completed the induction \eqref{rel}-\eqref{reh1}.  Then, lastly, observing that, for any natural number $n$,
\begin{align}
\left[\frac{1}{2} \right]_n\leq n! \leq 2^{n+1}\left[\frac{1}{2} \right]_n,
\end{align}
and using \eqref{cb1} once again, we have our desired result.
\end{proof}
\bigskip

 Compared to \cite{alex2019, alexey}, our argument has an additional step to estimate the $L^2$ norm of the ground state. In their proof, the $H_0^1$ norm is estimated directly by the same technique we used for $L^2$ norm estimation, so estimating $L^2$ norm was not needed. However, in our proof strategy, estimation of $L^2$ norm is unavoidable. In short, it is mainly because of the nonlinear term. More precisely, in \eqref{nuderi2}, because $v$ is a linear combination of eigenfunctions of operator $\mathcal{O}$, not $\mathcal{T}$, it is hard to see if the inequality $C \|v\|_{H_0^1}\leq \left< (\mathcal{T}-\lambda)v,v\right>$ needed for $H_0^1$ estimation in their argument is true. 

 Note that, in our setting, the diffusion coefficient $a$ does not depend on the parameters. In our strategy, having $a$ to be parametrized has an obstacle arising from the same reason stated in the previous paragraph. To be specific, first observe that if $a=a_0+\sum_{j\geq 1}y_j a_j$ as in \cite{alex2019},  then \eqref{deri} is adapted to
 \begin{align}
\label{aderi}
&(-\nabla\cdot(a(\bm{y})\nabla) +b(\bm{y}) +p\eta u^{p-1}  )\partial^{\bm{\nu}} u
-\sum_{i=1}^{\infty} \nu_i\nabla\cdot( a_i\nabla)\partial^{\bm{\nu}-\bm{e}_i}u   
+\sum_{i=1}^{\infty}\nu_i b_i \partial^{\bm{\nu}-\bm{e}_i} u
+\eta\Gamma(p,\bm{\nu})
\notag\\
&= \partial^{\bm{\nu}}\lambda u +\lambda \partial^{\bm{\nu}}u +\sum_{0\neq \bm{m}<\bm{\nu}}
\begin{pmatrix}
\bm{\nu}\\
\bm{m}
\end{pmatrix}
\partial^{\bm{m}}\lambda \partial^{\bm{\nu}-\bm{m}} u.
\end{align}
In the case of when $\bm{\nu}=\bm{e}_1$, we have
\begin{align}
(\mathcal{T}-\lambda)\partial^{\bm{e}_1}u =\partial^{\bm{e}_1}\lambda u -(-\nabla\cdot(a_1\nabla )+b_1)u.
\end{align}  
 In our strategy, we multiply $\partial^{\bm{e}_1}u$ on both sides and integrate them which gives us
 \begin{align}
 C_T \|\partial^{\bm{e}_1}u\|_{L^2}
 &\leq
  \left< (\mathcal{T}-\lambda)\partial^{\bm{e}_1}u,\partial^{\bm{e}_1}u \right>\\
 &=
 \partial^{\bm{e}_1}\lambda\left<\partial^{\bm{e}_1}u,u \right>+\left< a_1 \nabla u, \nabla \partial^{\bm{e}_1}u  \right>
-\left< b_1 u,\partial^{\bm{e}_1}u\right> .
 \end{align}
Now, note that bounding $\|\partial^{\bm{e}_1}u\|_{L^2}$ needs bounding $\left< a_1 \nabla u, \nabla \partial^{\bm{e}_1}u  \right>$. Similarly, for general multi-index $\bm{\nu}$, we need to bound $\left< a_i \nabla u, \nabla \partial^{\bm{\nu}}u  \right>$ in order to bound $\|\partial^{\bm{\nu}}u\|_{L^2}$. However, this implies that we need to bound $\|\partial^{\bm{\nu}}u\|_{H_0^1}$ in order to bound $\|\partial^{\bm{\nu}}u\|_{L^2}$.  As stated in the last paragraph, the bound of $\|\partial^{\bm{\nu}}u\|_{L^2}$ is needed before estimating $|\partial^{\bm{\nu}}\lambda|$ and $\|\partial^{\bm{\nu}}u\|_{H_0^1}$(see \eqref{see1} and \eqref{see}).   Thus, with our strategy, we could not parametrize the diffusion coefficient. On this other hands, in \cite{alexey, alex2019}, it was possible to parametrize the diffusion coefficient $a$ as the they could use orthogonal decomposition technique \eqref{decompositionpu} to bound $H_0^1$ norm directly which is not feasible in our case as stated in the previous paragraph.

 Besides the eigenpair, its energy has its own interest. Note that, different from the linear eigenvalue problem where the eigenvalue is the energy of the system, in our nonlinear case, the eigenvalue and the energy are different. With the results of Theorem \ref{main}, we can also estimate  the energy in \eqref{genergy}:
 \begin{align}
 \mathcal{E}(\bm{y} ):= \inf_{u\in H} E_{\bm{y}}[u],
\end{align}  
 where
 \begin{align}
 E_{\bm{y}} [u] : =\int a|\nabla u|^2 +\int b(\bm{y})|u|^2 +\frac{2\eta}{p+1}\int |u|^{p+1}.
 \end{align}
{ It is clear that $\mathcal{E}\in L^\infty(U,\mathbb{R})$ due to the expression \eqref{lande} and the fact that $u\in L^\infty(U,H_0^1)$ from Lemma \ref{ubound}.}  When estimating $\mathcal{E}$, we need further restriction than $\mathcal{A}$ because $(p+1)$th power appears. Thus, in this case, noting that $\mathcal{A}'\subset \mathcal{A}$, we assume  $(d,p)\in \mathcal{A}'$ defined in \eqref{dp2}.
 \bigskip
 
 \begin{corollary}
 \label{esen}
 Let $(d,p)\in \mathcal{A}'$ and let $\bm{\nu}\in \mathcal{F}$ be a multi-index. Then, for any $\bm{y}\in U$, we have
\begin{align}
\label{en}
|\partial_{\bm{y}}\mathcal{E}(\bm{y})|\leq \overline{\mathcal{E}} (|\bm{\nu}|!) \bm{\beta}^{\bm{\nu}},
\end{align}
where $\overline{\mathcal{E}}$ is an upper bound for $\mathcal{E}:U\rightarrow \mathbb{R}$ and $\bm{\beta}=(\beta_i)_{i=1}^{\infty}$ with $\beta_i=C_{\bm{\beta}}\|b_i\|_{L^\infty}$. {The constant}  $C_{\bm{\beta}}>0$ is from Theorem \ref{main} which will be increased if necessary so that it satisfies
\begin{align}
  \frac{1}{C_{\bm{\beta}}}
\left(
 \tilde{C}_{\lambda}
 +
 \tilde{C}_u C_{u}^{p}\eta
{C_{\mathcal{A}}}^{p+1} 2^{p}
\frac{p-1}{p+1}
 \right) 
 \leq \overline{\mathcal{E}},
\end{align}
where we follow the notations from Theorem \ref{main}.

 \end{corollary}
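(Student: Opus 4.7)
The plan is to reduce the estimate for $\partial^{\bm{\nu}}\mathcal{E}(\bm{y})$ to quantities already controlled by Theorem~\ref{main}. The key tool is the relation from Lemma~\ref{sol}, namely $\mathcal{E}(\bm{y}) = \lambda(\bm{y}) - \frac{\eta(p-1)}{p+1}\int u(\bm{y})^{p+1}$, which upon differentiation in $\bm{y}$ (justified by the analyticity from Theorem~\ref{anal}) gives
\begin{equation*}
\partial^{\bm{\nu}}\mathcal{E}(\bm{y}) \;=\; \partial^{\bm{\nu}}\lambda(\bm{y}) \;-\; \frac{\eta(p-1)}{p+1}\int \partial^{\bm{\nu}}\!\bigl(u(\bm{y})^{p+1}\bigr).
\end{equation*}
Theorem~\ref{main} already furnishes $|\partial^{\bm{\nu}}\lambda(\bm{y})|\leq\overline{\lambda}(|\bm{\nu}|!)\bm{\beta}^{\bm{\nu}}$, which will contribute the $\tilde{C}_{\lambda}/C_{\bm{\beta}}$ piece of the claimed bound. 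All that remains is to estimate the integral on the right.

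For that integral, I would invoke Lemma~\ref{gp} with $p$ replaced by $p+1$, yielding a decomposition $\partial^{\bm{\nu}}(u^{p+1}) = (p+1)u^{p}\partial^{\bm{\nu}}u + \Gamma(p+1,\bm{\nu})$ in the notation of \eqref{gamma}. The restricted admissible set $\mathcal{A}'$ enters exactly here: it was defined so that $2(p+1)\leq\frac{2d}{d-2}$ when $d\geq 3$, guaranteeing the Sobolev embedding $H_0^1\hookrightarrow L^{2(p+1)}$, which in turn lets every term of the form $\int u^{n}\prod_{j=1}^{p+1-n}\partial^{\bm{m}_j}u$ be estimated by H\"older's inequality with exponent $p+1$ as a product of $(p+1)$ norms, each dominated by $C_{\mathcal{A}}$ times an $H_0^1$ norm. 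The leading piece $\int u^{p}\partial^{\bm{\nu}}u$ is handled analogously, producing a factor $C_{\mathcal{A}}^{p+1}\overline{u}^{p}\|\partial^{\bm{\nu}}u\|_{H_0^1}$ bounded directly by the $H_0^1$ estimate of Theorem~\ref{main}.

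I would then substitute the inductive bound $\|\partial^{\bm{m}_j}u\|_{H_0^1}\leq C_{u'}\bigl[\tfrac{1}{2}\bigr]_{|\bm{m}_j|}\bm{\beta}^{\bm{m}_j}$ (together with $\|u\|_{H_0^1}\leq\overline{u}$) from the proof of Theorem~\ref{main}, and apply Lemma~\ref{ggcombi} to collapse each inner sum $\gamma(p+1-n,\bm{\nu})$ into a constant multiple of $\bigl[\tfrac{1}{2}\bigr]_{|\bm{\nu}|}\bm{\beta}^{\bm{\nu}}$. Summing over $n\in\{0,\dots,p-1\}$ with the binomial weights $\binom{p+1}{n}$ and collecting the $2^{p-n}$ factors produced by \eqref{fall_ineq1} yields the $2^{p}$ coefficient visible in the statement. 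Converting falling factorials to ordinary factorials via \eqref{fall_ineq} then produces a bound of the form $C\,(|\bm{\nu}|!)\bm{\beta}^{\bm{\nu}}$ for the nonlinear contribution.

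Finally, I would enlarge $C_{\bm{\beta}}$ if necessary so that the aggregated constant $C$ plus the $\overline{\lambda}$ contribution is absorbed into $\overline{\mathcal{E}}$ exactly as prescribed by the stated inequality; the bookkeeping is the only substantive step. The one real subtlety is that the H\"older chain for the nonlinearity must stay inside the Sobolev range permitted by $\mathcal{A}'$, not merely $\mathcal{A}$ — which is precisely why this corollary requires the stricter admissible pair $(d,p)\in\mathcal{A}'$ rather than $\mathcal{A}$.
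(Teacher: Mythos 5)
Your proposal is correct and follows essentially the same route as the paper: differentiate the identity $\mathcal{E}=\lambda-\eta\tfrac{p-1}{p+1}\int u^{p+1}$, control $\partial^{\bm{\nu}}\lambda$ by Theorem~\ref{main}, expand $\partial^{\bm{\nu}}(u^{p+1})$ via the Leibniz/multinomial rule, apply H\"older and the Sobolev embedding afforded by $\mathcal{A}'$, and collapse the combinatorial sums with the falling-factorial bounds and Lemma~\ref{ggcombi} before absorbing constants into $\overline{\mathcal{E}}$. The only cosmetic difference is that you organize the expansion through Lemma~\ref{gp} (leading term plus $\Gamma(p+1,\bm{\nu})$) whereas the paper sums directly over all decompositions $\sum_i\bm{m}_i=\bm{\nu}$; the resulting estimates coincide.
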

\bigskip 

\begin{proof}
 Let $\bm{\nu}\in \mathcal{F}$ be given. We use the following representation of $\mathcal{E}(\bm{y})$,
\begin{align}
\lambda(\bm{y})=\mathcal{E}(\bm{y})+\eta\frac{p-1}{p+1}\int |u(\bm{y})|^{p+1}.
\end{align}
 Recall that we have shown \eqref{rel}, \eqref{rel2}, and \eqref{reh1}  in the proof of the last theorem. Using it, we observe that, by the product rule and the Sobolev inequality,
\begin{align}
|\partial^{\bm{\nu}} \mathcal{E}(\bm{y})| 
&\leq 
|\partial^{\bm{\nu}} \lambda(\bm{y})|
+
\eta\frac{p-1}{p+1} 
\sum_{
\substack{
\sum_{i=1}^{p+1} \bm{m}_i=\bm{\nu}
}
}
\begin{pmatrix}
&\bm{\nu}\\
\bm{m}_1 &\cdots & \bm{m}_{p+1}
\end{pmatrix}
{C_{\mathcal{A}}}^{p+1} \prod_{j=1}^{p+1}
\|\partial^{\bm{m}_j}u(\bm{y})\|_{H_0^1}
\notag\\
&\leq C_{\lambda} 
\left[\frac{1}{2} \right]_{|\bm{\nu}|}
\bm{\beta}^{\bm{\nu}}
+
\eta {C_{\mathcal{A}}}^{p+1}\frac{p-1}{p+1}
\sum_{
\substack{
\sum_{i=1}^{p+1} \bm{m}_i=\bm{\nu}
}
}
\begin{pmatrix}
&\bm{\nu}\\
\bm{m}_1 &\cdots & \bm{m}_{p+1}
\end{pmatrix}
\prod_{j=1}^{p+1} C_{u'}
\left[\frac{1}{2} \right]_{|\bm{m}_j|}
\bm{\beta}^{\bm{m}_j}\notag\\
&=
\left(C_{\lambda}+C_{u}^{p+1}\eta
{C_{\mathcal{A}}}^{p+1} 2^{p}
\frac{p-1}{p+1}
 \right)\left[\frac{1}{2} \right]_{|\bm{\nu}|}
  \bm{\beta}^{\bm{\nu}}\notag\\
 &=\frac{1}{C_{\bm{\beta}}}\left(\tilde{C}_{\lambda}+\tilde{C}_u C_{u}^{p}\eta
{C_{\mathcal{A}}}^{p+1} 2^{p}
\frac{p-1}{p+1}
 \right) 
\left[\frac{1}{2} \right]_{|\bm{\nu}|} 
  \bm{\beta}^{\bm{\nu}}\notag\\
&\leq \overline{\mathcal{E}}\left[\frac{1}{2} \right]_{|\bm{\nu}|} \bm{\beta}^{\bm{\nu}},
\end{align}
where we also used the result of Lemma \ref{ggcombi} in the third line. Now, using $\left[ \frac{1}{2}\right]_{|\bm{\nu}|}\leq |\bm{\nu}|!$, we obtain the desired result.
\end{proof}

\section{Application to Uncertainty Quantification}\label{application}

In this section, we suggest a possible application of the estimation of mixed derivatives of the ground eigenpair and the energy we obtained from the last section. We will discuss the error analysis for approximating the expectations of those functions. Specifically, the quantities of our interest are
\begin{align}
\mathbb{E}_{\bm{y}}\left[ \lambda(\bm{y})  \right]=\int_{\left[-\frac{1}{2},\frac{1}{2} \right]^{\mathbb{N}}} \lambda(\bm{y})d\bm{y}=\lim_{s\rightarrow \infty} \int_{\left[-\frac{1}{2},\frac{1}{2} \right]^s} \lambda(y_1,y_2,\dots, y_s,0,\dots)dy_1 dy_2\cdots dy_s,
\end{align}
and
\begin{align}
\mathbb{E}_{\bm{y}}\left[ \mathcal{G}(u(\bm{y}))  \right]=\int_{\left[-\frac{1}{2},\frac{1}{2} \right]^{\mathbb{N}}} \mathcal{G}(u(\bm{y}))d\bm{y}=\lim_{s\rightarrow \infty} \int_{\left[-\frac{1}{2},\frac{1}{2} \right]^s} \mathcal{G}(u(y_1,y_2,\dots, y_s,0,\dots))dy_1 dy_2\cdots dy_s.
\end{align}
for any $\mathcal{G}\in H^{-1}$. This section and Appendix \ref{appendixb} are straight forward adaptation of \cite{alex2019, sch1}. However, in those papers, the theorems for the error analysis are stated and proved focusing on specific PDEs. In order to justify that those theorems can be applied to our case, this paper restates their theorems in a way of generalization and modify the proofs for the completeness. In this process, the theorems are restated in a way that any solutions of any PDEs satisfying certain estimation of mixed derivatives have same quasi-Monte Carlo error convergence rate.

\subsection{Quasi-Monte Carlo methods.}
\label{qmc background}  In order to estimate these quantities, we will use quasi-Monte Carlo(QMC) method. QMC method, like the Monte Carlo method, estimates finite dimensional expectation. For example, in Monte Carlo(MC) method, we estimate the expectation of a function $f:\Omega\rightarrow \mathbb{R}$ with respect to a given  probability measure $\mathbb{P}$ by 
\begin{align}
\int_\Omega f(X)d\mathbb{P}(X) = \frac{1}{N}\sum_{i=1}^{N} f(X_i),
\end{align}
where $\Omega \subset \mathbb{R}^s$,  for a positive integer $s$, is a compact subset and $X_i \sim \mathbb{P}$ are iid samples. In QMC, instead of the iid samples as in the MC method, it uses a lattice rule which is designed for a better convergence rate. There are many different kinds of lattice rules for QMC method, but, in this paper, we use a lattice rule called \textit{CBC generated randomly shifted rank 1 rule}.

 \textit{CBC generated randomly shifted rank 1 lattice rule} is constructed with a generating vector $\bm{z}\in \mathbb{Z}^s$ and with  a random shift $\bm{\Delta}\sim Unif([0,1]^s)$. Specifically, the points we use are $\left( \left\{\frac{k\bm{z}}{N} +\bm{\Delta} \right\}-\bm{\frac{1}{2}} \right)_{k=0}^{N-1}$ where $\{\}$ notation means taking the fraction part of it, i.e., $\{\frac{3}{2}\}=\frac{1}{2}$. And $\bm{\frac{1}{2}}=\left(\frac{1}{2},\dots,\frac{1}{2} \right)\in \mathbb{R}^s$. Thus, through QMC, after the truncation, we want to estimate 
 \begin{align}
 \mathbb{E}_{\bm{y}_s}[f_s(\bm{y_s})] : = \int_{\left[-\frac{1}{2},\frac{1}{2} \right]^s} f(\bm{y}_s)d\bm{y}_s,
 \end{align}
by
\begin{align}
Q_{N,s} f := \frac{1}{N}\sum_{k=0}^{N-1}f\left( \left\{\frac{k\bm{z}}{N}+\bm{\Delta}  \right\}-\bm{\frac{1}{2}} \right),
\end{align}
where $f$ in our case would be $\lambda_s$ or $\mathcal{G}(u_s)$ defined by $\lambda_s(\bm{y}_s):=\lambda(\bm{y}_s;\bm{0})$ and $\mathcal{G}(u_s)(\bm{y}_s):=\mathcal{G}(u)(\bm{y}_s;\bm{0})$ with $\bm{y}_s :=(y_1,\dots, y_s)$. In this QMC rule, $\bm{z}$ is constructed using the Fast component-by-component(CBC) algorithm. For more details of this lattice rule or other rules, see  \cite{quasi}. Other works that used the randomly shifted lattice rule for various elliptic PDE can be found, for example, in \cite{para8, sch1, kien2022, alex2019}.

 As QMC method is for estimating a finite-dimensional integral, it is necessary to truncate the infinite-dimensional integral to a finite-dimensional one and it causes the truncation error. Thus, our error analysis would consist of two parts: truncation error and QMC error.

\subsection{Dimension truncation error.}
\label{truncation error}
 We will first estimate the parametric dimension truncation error and then the finite-dimensional integral by the QMC method. For convenience, let us notate, for given function $f: U\rightarrow X$ with a given normed vector space $X$, for any $\bm{y}\in U$, $\bm{y}_s:= (y_1,y_2,\dots, y_s)$ and $f_s:\left[ -\frac{1}{2},\frac{1}{2} \right]^s\rightarrow X$ defined by $f_s(\bm{y}_s)=f(\bm{y}_s;\bm{0})$.  The following lemma is a simple generalization of Theorem 4.1 in \cite{alex2019}, and the proof is also a simple adaptation except for the inclusion of case $q=1$. In order to point out the generality, we leave the proof in Appendix \ref{appendixb} for completeness.
\\

\begin{lemma}
\label{gtrunc}
Let $f: U\rightarrow X$ be given where $X$ is a given normed vector space with norm $\|\cdot\|_X$. Suppose that $f$ is analytic for each restriction to a finite-dimensional subspace and that $\|\partial^{\bm{\nu}}f(\bm{y})\|_X\leq g(\bm{\nu})\bm{\beta}^{\bm{\nu}}$ where $\bm{\beta}\in \ell^q$ with $q\in (0,1]$ is a decreasing sequence. And, for any fixed $k\in \mathbb{N}$, let 
\begin{align}
C_k:= \max_{\substack{\bm{\nu}\in \mathcal{F}\\ |\bm{\nu}|\leq k  }} \frac{g(\bm{\nu})}{2^{|\bm{\nu}|}\bm{\nu}!}.
\end{align}
 If there exists $k\in \mathbb{N}$ such that $C_k<\infty$,  then there exist $C>0$ such that, for sufficiently large $s\in \mathbb{N}$, 
\begin{align}
\label{strong}
\|f(\bm{y})-f_s(\bm{y}_s)\|_X \leq C T_1(s)
\end{align} 
where
\begin{align}
T_1(s)=
\begin{cases}
s^{-\frac{1}{q}+1}, &q\in (0,1),\\
\sum_{j>s}\beta_j, &q=1.
\end{cases}
\end{align}
Further suppose that, $C_k<\infty$ for any $k\in \mathbb{N}$. 
Then, there exists $C'>0$ such that, for sufficiently large $s\in \mathbb{N}$ and for any $\mathcal{G}\in X^*$,
\begin{align}
\label{weak}
\left| \mathbb{E}_{\bm{y}} [\mathcal{G}(f)-\mathcal{G}(f_s)]  \right|\leq C' T_2(s),
\end{align}
where
\begin{align}
T_2(s)=
\begin{cases}
s^{-\frac{2}{q}+1}, &q\in (0,1),\\
\left(\sum_{j>s}\beta_j\right)^2, &q=1.
\end{cases}
\end{align}
 Here, the constants $C$ and $C'$ are independent of $\bm{y}$ and $s$.
\end{lemma}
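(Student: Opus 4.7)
The plan is a telescoping decomposition along the tail coordinates $(y_{s+1}, y_{s+2},\ldots)$, combined with one-variable Taylor expansion on each term, exploiting the symmetry of the uniform measure $d\bm{y}$ for the weak bound.

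For the strong bound \eqref{strong}, I would set $f_j(\bm{y}_j):=f(\bm{y}_j;\bm{0})$ and, for each integer $M>s$, telescope
\[
f_M(\bm{y}_M)-f_s(\bm{y}_s) \;=\; \sum_{j=s+1}^{M}\bigl[f_j(\bm{y}_j)-f_{j-1}(\bm{y}_{j-1})\bigr] \;=\; \sum_{j=s+1}^{M}\int_{0}^{y_j}\partial^{\bm{e}_j} f(\bm{y}_{j-1};\,t;\,\bm{0})\,dt,
\]
where the last equality is the fundamental theorem of calculus in the single variable $y_j$, valid by the separate analyticity hypothesis. The hypothesis at $k=1$ gives $\|\partial^{\bm{e}_j} f\|_X\le g(\bm{e}_j)\beta_j\le 2C_1\beta_j$, so each summand has $X$-norm bounded by $C_1\beta_j$. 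Since $\bm{\beta}\in\ell^q$ with $q\le 1$ implies $\bm{\beta}\in\ell^1$, the partial sums are Cauchy in $X$ with majorant $\sum_{j>s}\beta_j<\infty$; passing $M\to\infty$ and using continuity of $f$ in the product topology identifies the limit as $f(\bm{y})-f_s(\bm{y}_s)$, yielding $\|f(\bm{y})-f_s(\bm{y}_s)\|_X\le C_1\sum_{j>s}\beta_j$. For $q=1$ this is already $T_1(s)$; for $q\in(0,1)$ the decreasing property gives $\beta_j\le\|\bm{\beta}\|_{\ell^q}\,j^{-1/q}$, hence $\sum_{j>s}\beta_j\le Cs^{1-1/q}$, matching $T_1(s)$.

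For the weak bound \eqref{weak}, I would apply $\mathcal{G}\in X^*$ to each telescoping summand and Taylor-expand to second order in $y_j$:
\[
f_j(\bm{y}_{j-1};y_j)-f_{j-1}(\bm{y}_{j-1})\;=\;y_j\,\partial^{\bm{e}_j}f(\bm{y}_{j-1};0;\bm{0})\;+\;\int_{0}^{y_j}(y_j-t)\,\partial^{2\bm{e}_j}f(\bm{y}_{j-1};t;\bm{0})\,dt.
\]
Integrating against $d\bm{y}$, the first-order term contributes zero because $\partial^{\bm{e}_j}f(\bm{y}_{j-1};0;\bm{0})$ is independent of $y_j$ while $\int_{-1/2}^{1/2}y_j\,dy_j=0$. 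The remainder has $X$-norm bounded by $\tfrac12 y_j^2\, g(2\bm{e}_j)\beta_j^2\le Cy_j^2\beta_j^2$ using the hypothesis at $k=2$. Integrating $y_j^2$ over $[-1/2,1/2]$ and summing over $j>s$ yields $|\mathbb{E}_{\bm{y}}[\mathcal{G}(f)-\mathcal{G}(f_s)]|\le C'\|\mathcal{G}\|_{X^*}\sum_{j>s}\beta_j^2$. For $q=1$, the elementary inequality $\sum_{j>s}\beta_j^2\le(\sum_{j>s}\beta_j)^2$ (valid for nonnegative sequences) supplies $T_2(s)$. For $q\in(0,1)$, the H\"older-type estimate $\sum_{j>s}\beta_j^2\le\beta_{s+1}^{2-q}\|\bm{\beta}\|_{\ell^q}^q$ combined with $\beta_{s+1}\le Cs^{-1/q}$ gives $\sum_{j>s}\beta_j^2\le C's^{1-2/q}$, matching $T_2(s)$.

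The main obstacle is justifying the exchange of the limit $M\to\infty$ with the $X$-norm (strong bound) and with the expectation against $d\bm{y}$ (weak bound). Both rely on dominated convergence with the summable majorants $\sum_{j>s}\beta_j$ and $\sum_{j>s}\beta_j^2$, and on separate analyticity of $f$ so that the partial truncations $f_M(\bm{y}_M)$ converge to $f(\bm{y})$ in $X$. Beyond this, the argument reduces to elementary Taylor estimation and the symmetry of the uniform law, with only $C_1$ entering the strong bound and only $C_2$ the weak bound; the hypothesis $C_k<\infty$ for every $k$ is stated more strongly than strictly needed here in order to match the higher-order QMC analysis to follow.
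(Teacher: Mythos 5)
Your proof is correct, but it takes a genuinely different route from the one in Appendix~\ref{appendixb}. The paper Taylor-expands $f$ at $(\bm{y}_s;\bm{0})$ in \emph{all} tail variables simultaneously to order $k$ with an integral remainder; it must then control the combinatorial sums $\sum_{\bm{\nu}\in\mathcal{F}_{\ell,s}}\bm{\beta}^{\bm{\nu}}$ (via the product--exponential trick and $\sum_{j>s}\beta_j\lesssim s^{1-1/q}$) and, for the weak estimate, choose $k$ so large that the remainder term $\bigl(\sum_{j>s}\beta_j\bigr)^{k+1}$ is dominated by $s^{1-2/q}$ --- this is precisely why the lemma assumes $C_k<\infty$ for every $k$. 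Your coordinate-by-coordinate telescoping replaces all of that with one-variable Taylor expansions of order one (strong bound) and order two (weak bound), so only $C_1$ and $C_2$ enter and the multi-index combinatorics disappear; the vanishing of the first-order term under $\mathbb{E}[y_j]=0$ plays exactly the role of the paper's restriction to multi-indices with $\nu_j\neq 1$ for all $j$. Your tail estimates are all valid: $\beta_j\le\|\bm{\beta}\|_{\ell^q}j^{-1/q}$ from monotonicity, $\sum_{j>s}\beta_j^2\le\beta_{s+1}^{2-q}\|\bm{\beta}\|_{\ell^q}^q\lesssim s^{1-2/q}$, and $\sum_{j>s}\beta_j^2\le\bigl(\sum_{j>s}\beta_j\bigr)^2$ for the $q=1$ case, so the rates match $T_1$ and $T_2$. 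The one step you rightly flag --- identifying $\lim_{M\to\infty}f_M(\bm{y}_M)$ with $f(\bm{y})$ --- requires continuity of $f$ in the tail parameters, which is not literally among the hypotheses; but the paper's own expansion \eqref{taylor}, an infinite sum over $\mathcal{F}_{\ell,s}$ obtained by differentiating along a direction involving infinitely many coordinates, rests on the same unstated regularity, so your argument is no worse off on this point. What your approach buys is weaker hypotheses and a shorter proof; what the paper's buys is a higher-order expansion whose structure is reused in the best-$N$-term and QMC analysis that follows.
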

In the lemma above, \eqref{strong} is called a strong truncation error, and \eqref{weak} is called a weak truncation error. Then, applying Lemma \ref{gtrunc} to our case, we obtain the following theorem.\\

\begin{theorem}
\label{trunc}
Let $(d,p)\in \mathcal{A}$ for $u$ and $\lambda$ and $(d,p)\in \mathcal{A}'$ for $\mathcal{E}$. And let $(\lambda(\bm{y}),u(\bm{y}))$ is the ground state of \eqref{semi} with the Assumption 1. And suppose that $\left(  \|b_j\|_{L^\infty}  \right)_{j\geq 1}\in \ell^q$ for some $q\in (0,1]$ and $T_1(s)$ and $T_2(s)$ defined as Lemma \ref{gtrunc}. Then, for any $\bm{y}\in U$, there exist constants $c_k>0$ for $1\leq k\leq 6$ such that  the strong truncation error is bounded by 
\begin{align}
\label{lambdastrong}
|\lambda(\bm{y})-\lambda_s(\bm{y}_s)|\leq c_1 T_1(s),\\
|\mathcal{E}(\bm{y})-\mathcal{E}_s(\bm{y}_s)|\leq c_2 T_1(s),\\
\|u(\bm{y})-u(\bm{y}_s;\bm{0})\|_{H_0^1}\leq c_3 T_1(s).
\end{align}
The weak truncation error is bounded by 
\begin{align}
|\mathbb{E}_{\bm{y}}[\lambda-\lambda_s]|\leq c_4 T_2(s),\\
|\mathbb{E}_{\bm{y}}[\mathcal{E}-\mathcal{E}_s]|\leq c_5 T_2(s),
\end{align}
and, for any $\mathcal{G}\in H^{-1}$,
\begin{align}
\label{uweak}
|\mathbb{E}_{\bm{y}}[\mathcal{G}(u)-\mathcal{G}(u_s)]|\leq c_6 T_2(s).
\end{align}
\end{theorem}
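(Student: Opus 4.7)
The plan is to view Theorem \ref{trunc} as a direct specialization of the general truncation result Lemma \ref{gtrunc}, once the derivative bounds from Theorem \ref{main} and Corollary \ref{esen} are in hand. Specifically, for each of the three quantities $\lambda$, $u$, $\mathcal{E}$, those earlier results give estimates of the form $\|\partial^{\bm{\nu}} f(\bm{y})\|_X \le C_f\,(|\bm{\nu}|!)\,\bm{\beta}^{\bm{\nu}}$, so the hypothesis of Lemma \ref{gtrunc} is satisfied with $g(\bm{\nu}) = C_f |\bm{\nu}|!$, taking $X = \mathbb{R}$ for the scalar quantities $\lambda$ and $\mathcal{E}$ and $X = H_0^1$ for $u$.

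Next I would verify the finiteness of $C_k$. With $g(\bm{\nu}) = C_f |\bm{\nu}|!$, since $\bm{\nu}! = \prod_{i\ge1}\nu_i! \ge 1$, one has
\begin{equation*}
\frac{g(\bm{\nu})}{2^{|\bm{\nu}|}\bm{\nu}!} \;\le\; \frac{C_f\,|\bm{\nu}|!}{2^{|\bm{\nu}|}},
\end{equation*}
so $C_k \le C_f\,k!/2^k < \infty$ for every $k \in \mathbb{N}$. This unlocks \emph{both} halves of Lemma \ref{gtrunc}: the strong bound \eqref{strong} and the weak bound \eqref{weak}.

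With this in place the six assertions fall out by choosing $(X,\mathcal{G})$ appropriately. The three strong bounds follow from \eqref{strong} applied to $\lambda$, $\mathcal{E}$ (with $X = \mathbb{R}$) and to $u$ (with $X = H_0^1$). For the weak bounds on $\lambda$ and $\mathcal{E}$, I apply \eqref{weak} with $X = \mathbb{R}$ and the identity functional $\mathcal{G}(x) = x \in \mathbb{R}^*$; for the weak bound \eqref{uweak} on $u$, I apply \eqref{weak} with $X = H_0^1$ and the given $\mathcal{G} \in H^{-1} = (H_0^1)^*$. The constants $c_1,\dots,c_6$ then collect the various factors $C_f$, $\|\mathcal{G}\|_{H^{-1}}$ and the geometric series in $s$ coming from Lemma \ref{gtrunc}.

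The only real piece of bookkeeping, and what I expect to be the main (mild) obstacle, is verifying that each of $\lambda$, $u$, $\mathcal{E}$ is analytic on every finite-dimensional coordinate restriction, which is a standing hypothesis of Lemma \ref{gtrunc}. For $\lambda$ and $u$ this is precisely Theorem \ref{anal}. For $\mathcal{E}$ it requires an extra line: write $\mathcal{E}(\bm{y}) = \lambda(\bm{y}) - \eta\tfrac{p-1}{p+1}\int u(\bm{y})^{p+1}$ and observe that, since $(d,p) \in \mathcal{A}'$ gives $H_0^1 \hookrightarrow L^{p+1}$, the map $v \mapsto \int v^{p+1}$ is an analytic (polynomial) functional on $H_0^1$; composing with the $H_0^1$-valued analytic map $\bm{y} \mapsto u(\bm{y})$ and subtracting from $\lambda(\bm{y})$ preserves analyticity. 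Once this is noted, the rest of the argument is a routine translation of Lemma \ref{gtrunc}.
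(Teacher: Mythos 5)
Your proposal is correct and follows essentially the same route as the paper: both proofs specialize Lemma \ref{gtrunc} by taking $g(\bm{\nu})=C|\bm{\nu}|!$ from Theorem \ref{main} and Corollary \ref{esen}, check $C_k<\infty$, and then choose $X$ and $\mathcal{G}$ (identity functional for the scalar quantities) appropriately. Your extra remark on the analyticity of $\mathcal{E}$ via $\mathcal{E}=\lambda-\eta\tfrac{p-1}{p+1}\int u^{p+1}$ is a welcome detail the paper leaves implicit; the only small hypothesis you omit is the paper's note that $\bm{\beta}$ may be rearranged to be decreasing, as required by Lemma \ref{gtrunc}.
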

\begin{proof}
Observe that, from Theorem \ref{main} and Corollary \ref{esen}, $\lambda$, $\mathcal{E}$ and $u$ are separately complex analytic whose derivatives satisfy the bound with $g(\bm{\nu})=C(|\bm{\nu}|!)$ and with $\|\cdot\|_X$ replaced by $|\cdot|$ and $\|\cdot\|_{H_0^1}$. Also, we can rearrange $\bm{\beta}$ so that it is decreasing. Also, it is clear that $C_k<\infty$ for any positive integer $k$ with the function $g$ defined above. Therefore, \eqref{lambdastrong} - \eqref{uweak} hold as desired with $\mathcal{G}$ to be replaced with identity function in the case of eigenvalue $\lambda$ and the energy $\mathcal{E}$.
\end{proof}
\bigskip

\subsection{Quasi-Monte Carlo error.}\label{qmc error} We can also reformulate the QMC error analysis of Theorem 4.2 in \cite{alex2019} and Theorem 6.4 in \cite{sch1} as follows. \\

\begin{lemma}
\label{gen}
Let $s\in \mathbb{N}$ be given. Let $X$ be a Banach space and let $f:\left[-\frac{1}{2},\frac{1}{2} \right]^s\rightarrow X$ be an analytic. And suppose that, for some constant $C>0$ and $\bm{\beta}\in \ell^q$ with $q\in (0,1)$, $\|\partial^{\bm{\nu}}f(\bm{y})\|_X\leq C (|\bm{\nu}|!) \bm{\beta}^{\bm{\nu}}$ for all $\bm{\nu}\in \mathcal{F}$ and for some $\varepsilon\in (0,1)$. Then, for any prime number $N$ and for any $\mathcal{G}\in X^*$, a lattice rule generating vector $\bm{z}\in \mathbb{N}^s$ can be constructed by CBC algorithm such that for some $s$ independent constant $C_\alpha>0$,
\begin{align}
\sqrt{\mathbb{E}_{\bm{\Delta}}\left[ \left| \mathbb{E}_{\bm{y}}\left[\mathcal{G}\left(f\right) \right]-Q_{N,s}\mathcal{G}(f)   \right|^2 \right]}\leq C_\alpha N^{-\alpha},
\end{align}
where $\bm{\Delta}\in [0,1]^s$ is the random shift and, for arbitrary $\delta\in \left(0,\frac{1}{2} \right)$,
\begin{align}
\alpha=
\begin{cases}
1-\delta, &q\in \left(0,\frac{2}{3} \right],  \\
\frac{1}{q}-\frac{1}{2},& q\in \left(\frac{2}{3},1\right].
\end{cases}
\end{align}
where, in the case of $q=1$, we assume additional condition, $\sum_{j\geq 1}\beta_j<\sqrt{6}$.
 \end{lemma}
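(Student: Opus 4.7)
The plan is to follow the now-standard weighted Sobolev space framework for randomly shifted rank-1 lattice rules with POD (product and order dependent) weights, as developed by Kuo, Sloan, Nuyens, Schwab and coauthors and already adapted to parametric eigenvalue problems in \cite{alex2019}. First I would reduce to the scalar case: since $\mathcal{G}\in X^*$ is bounded, the composition $g(\bm{y}):=\mathcal{G}(f(\bm{y}))$ is analytic on $[-\tfrac12,\tfrac12]^s$ and inherits the derivative bound
\begin{align}
|\partial^{\bm{\nu}}g(\bm{y})|\;\leq\;C\,\|\mathcal{G}\|_{X^*}\,(|\bm{\nu}|!)\,\bm{\beta}^{\bm{\nu}}\qquad\text{for all }\bm{\nu}\in\mathbb{N}_0^s,
\end{align}
so it suffices to establish the claimed QMC convergence rate for the scalar analytic integrand $g$ whose mixed derivatives satisfy exactly the same form as in \cite{alex2019}.

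Next I would set up the weighted unanchored Sobolev space $\mathcal{W}_{s,\bm{\gamma}}$ with POD weights of the form $\gamma_{\mathfrak{u}}=(|\mathfrak{u}|!)^{c}\prod_{j\in\mathfrak{u}}\gamma_j$ for parameters $c>0$ and a positive sequence $(\gamma_j)$ to be optimized. The standard CBC existence theorem (e.g.\ Theorem 5.10 in \cite{quasi}; see also \cite{sch1, alex2019}) guarantees, for every prime $N$ and every $\lambda\in(\tfrac12,1]$, a generating vector $\bm{z}\in\mathbb{N}^s$ such that
\begin{align}
\sqrt{\mathbb{E}_{\bm{\Delta}}\bigl[|I_s g-Q_{N,s}g|^2\bigr]}\;\leq\;\Bigl(\tfrac{1}{\phi(N)}\sum_{\emptyset\neq\mathfrak{u}\subseteq[s]}\gamma_{\mathfrak{u}}^{\lambda}\bigl(\tfrac{2\zeta(2\lambda)}{(2\pi^2)^{\lambda}}\bigr)^{|\mathfrak{u}|}\Bigr)^{1/(2\lambda)}\|g\|_{\mathcal{W}_{s,\bm{\gamma}}}.
\end{align}
The norm $\|g\|_{\mathcal{W}_{s,\bm{\gamma}}}$ is the sum over $\mathfrak{u}\subseteq[s]$ of $\gamma_{\mathfrak{u}}^{-1}$ times the squared $L^2$-norm of the mixed partial $\partial^{\bm{1}_{\mathfrak{u}}}g$ (after integrating out the complementary variables). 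Using the derivative bound and $(|\mathfrak{u}|!)\leq(|\bm{\nu}|!)$ with $\bm{\nu}=\bm{1}_{\mathfrak{u}}$, this norm is bounded uniformly in $s$ by
\begin{align}
\|g\|_{\mathcal{W}_{s,\bm{\gamma}}}^{2}\;\leq\;C^{2}\|\mathcal{G}\|_{X^*}^{2}\sum_{\mathfrak{u}\subseteq[s]}(|\mathfrak{u}|!)^{2}\,\gamma_{\mathfrak{u}}^{-1}\prod_{j\in\mathfrak{u}}\beta_j^{2}.
\end{align}

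The third step is to optimize the weights. Minimizing the product of the two factors above in each $\gamma_{\mathfrak{u}}$ (as in Lemma 6.2 of \cite{sch1} / Theorem 20 of \cite{quasi}) leads to the explicit choice
\begin{align}
\gamma_{\mathfrak{u}}\;=\;\Bigl(|\mathfrak{u}|!\prod_{j\in\mathfrak{u}}\tfrac{\beta_j}{\sqrt{2\zeta(2\lambda)/(2\pi^2)^{\lambda}}}\Bigr)^{2/(1+\lambda)},
\end{align}
which yields a bound of the form $C_\alpha N^{-1/(2\lambda)}$ with implicit constant controlled by
\begin{align}
\Bigl(\sum_{\ell\geq 0}\ell!^{2\lambda/(1+\lambda)}\Bigl(\sum_{j\geq 1}\beta_j^{2\lambda/(1+\lambda)}\Bigr)^{\ell}\Bigr)^{(1+\lambda)/(2\lambda)}.
\end{align}
Under the assumption $\bm{\beta}\in\ell^{q}$ this series is finite, uniformly in $s$, provided $\tfrac{2\lambda}{1+\lambda}\geq q$ and the ratio-test condition $\sum_{j\geq 1}\beta_j^{2\lambda/(1+\lambda)}<1$ on the tail (which in the borderline $q=1$ case is ensured by the hypothesis $\sum_{j\geq 1}\beta_j<\sqrt{6}$, the constant $\sqrt{6}$ arising from $2\zeta(2)/(2\pi^2)=\tfrac16$). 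Choosing $\lambda=1/(2-2\delta)$ when $q\in(0,\tfrac23]$ gives the rate $N^{-(1-\delta)}$, while choosing $\lambda=q/(2-q)$ when $q\in(\tfrac23,1]$ gives $N^{-(1/q-1/2)}$, matching the stated $\alpha$ and the ranges exactly as in \cite[Thm.~4.2]{alex2019} and \cite[Thm.~6.4]{sch1}.

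The step I expect to be most delicate is obtaining an $s$-\emph{independent} constant: both the POD weight sum in the CBC bound and the weighted norm of $g$ are sums over all subsets $\mathfrak{u}\subseteq[s]$, and the factorial $(|\mathfrak{u}|!)$ appearing from our derivative bound must be absorbed carefully against the weight $\gamma_{\mathfrak{u}}$ before the $\ell^{q}$ summability of $\bm{\beta}$ can close out the estimate. The admissible range of $\lambda$ and hence of $\alpha$, together with the extra restriction needed at the endpoint $q=1$, is dictated precisely by when that double sum converges uniformly in $s$. Since $g$ satisfies the same derivative bound $C(|\bm{\nu}|!)\bm{\beta}^{\bm{\nu}}$ assumed in \cite{alex2019}, once the reduction via $\mathcal{G}$ is done the remaining estimates are structurally identical to theirs and I would simply restate them in the present generality for completeness.
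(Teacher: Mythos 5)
Your roadmap is the same as the paper's: reduce to a scalar integrand via $\mathcal{G}$, invoke the CBC existence theorem (the paper cites Theorem 5.10 of \cite{dick}), bound the unanchored norm using the derivative hypothesis, and optimize POD weights of exactly the same form, $\gamma_{\bm{u}}=\bigl((|\bm{u}|!)^2\bm{\beta}^{2\bm{u}}/\rho(\theta)^{|\bm{u}|}\bigr)^{1/(1+\theta)}$ with $\rho(\theta)=2\zeta(2\theta)/(2\pi^2)^{\theta}$, followed by the same bookkeeping over $\theta\in(\tfrac12,1]$ to read off $\alpha$.

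One displayed formula is off, however, and it matters for closing the estimate: the constant you write as $\bigl(\sum_{\ell\geq 0}\ell!^{2\lambda/(1+\lambda)}(\sum_{j}\beta_j^{2\lambda/(1+\lambda)})^{\ell}\bigr)^{(1+\lambda)/(2\lambda)}$ diverges for every $\lambda>0$, since the factorial factor grows super-exponentially and no geometric factor can compensate. You have dropped the combinatorial factor from $\sum_{|\bm{u}|=\ell}\prod_{j\in\bm{u}}a_j\leq\tfrac{1}{\ell!}\bigl(\sum_j a_j\bigr)^{\ell}$; once restored, the exponent becomes $\tfrac{2\lambda}{1+\lambda}-1=\tfrac{\lambda-1}{1+\lambda}<0$ for $\lambda<1$, so the outer series converges for any finite value of the inner sum. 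This also corrects your claim that the ``ratio-test'' condition $\sum_j\beta_j^{2\lambda/(1+\lambda)}<1$ is needed in general: it is only needed at the endpoint $q=1$, where one must take $\lambda=1$, the factorial decay disappears, and the hypothesis $\sum_j\beta_j<\sqrt{6}=\rho(1)^{-1/2}$ (via Lemma 6.3 of \cite{sch1}) is exactly what the paper invokes. With that correction your argument closes identically to the paper's proof.
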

The proof of the lemma above is almost identical to Theorem 4.2 in \cite{alex2019} except for the inclusion of the case $q=1$. For completeness and to clarify the applicability to our general setting, we leave the proof in Appendix \ref{appendixb}. We can conclude the following theorem by combining Theorem \ref{main} and the lemma above.\\

\begin{theorem}
Let $(d,p)\in \mathcal{A}$ for $u$ and $\lambda$ and $(d,p)\in \mathcal{A}'$ for $\mathcal{E}$. Moreover, let $s\in \mathbb{N}$ be given. Let $(\lambda(\bm{y}),u(\bm{y}))$ is the ground state of \eqref{semi} with the Assumption 1. And suppose that $\left(  \|b_j\|_{L^\infty}  \right)_{j\geq 1}\in \ell^q$ for some $q\in (0,1)$. Let $N\in \mathbb{N}$ be prime, $\mathcal{G}\in H^{-1}$. Then the root-mean-square errors of  the CBC-generated randomly shifted lattice rule approximations of $\mathbb{E}_{\bm{y}}[\lambda_s]$ and $\mathbb{E}_{\bm{y}}[\mathcal{G}(u_s)]$ with the generating vector constructed as in Lemma \ref{gen}  are bounded by
\begin{align}
\sqrt{\mathbb{E}_{\bm{\Delta}}\left[
\left| \mathbb{E}_{\bm{y}}[\lambda_s]-Q_{N,s}\lambda_s
\right|^2  \right]}\leq C_{1,\alpha} N^{-\alpha},\\
\sqrt{\mathbb{E}_{\bm{\Delta}}\left[
\left| \mathbb{E}_{\bm{y}}[\mathcal{E}_s]-Q_{N,s}\mathcal{E}_s
\right|^2  \right]}\leq C_{2,\alpha} N^{-\alpha},
\end{align}
and
\begin{align}
\sqrt{\mathbb{E}_{\Delta}
\left[ \left| 
\mathbb{E}_{\bm{y}}[\mathcal{G}(u_s)]-Q_{N,s}\mathcal{G}(u_s) \right|^2
\right]
}\leq C_{3,\alpha}N^{-\alpha},
\end{align}
where $\bm{\Delta}\in [0,1]^s$ is a random shift, the generating vector $\bm{z}\in \mathbb{N}^s$ is constructed as in Lemma \ref{gen} and
\begin{align}
\alpha=
\begin{cases}
1-\delta, &q\in \left(0,\frac{2}{3} \right],\\
\frac{1}{q}-\frac{1}{2}, &q\in \left( \frac{2}{3},1 \right],
\end{cases}
\end{align}
for arbitrary $\delta\in \left(0,\frac{1}{2} \right)$ and for some $s$ independent constants $C_{1,\alpha},C_{2,\alpha},C_{3,\alpha}>0$. And, in the case of $q=1$, we additionally assumed that $\sum_{j\geq 1}\beta_j<\sqrt{6}$ with $\beta_j=C_{\beta}\|b_j\|_{L^\infty}$ as defined in Theorem \ref{main}.

\end{theorem}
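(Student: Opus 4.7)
The plan is to treat this as a direct corollary of Lemma \ref{gen} applied three times, once each for $\lambda_s$, $\mathcal{E}_s$, and $\mathcal{G}(u_s)$, after observing that the parametric derivative bounds from Theorem \ref{main} and Corollary \ref{esen} transfer verbatim to the $s$-truncated functions. The only real content is a bookkeeping argument: one must check that the hypotheses of Lemma \ref{gen} are satisfied in each of the three settings.

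First I would fix the Banach space and functional for each case. For the eigenvalue I take $X=\mathbb{R}$ with $\mathcal{G}=\mathrm{id}\in X^{\ast}$, and invoke Theorem \ref{main} to get
\begin{align*}
|\partial^{\bm{\nu}}\lambda_s(\bm{y}_s)|=|\partial^{\bm{\nu}}\lambda(\bm{y}_s;\bm{0})|\leq \overline{\lambda}(|\bm{\nu}|!)\bm{\beta}^{\bm{\nu}}
\end{align*}
for every multi-index $\bm{\nu}\in\mathbb{N}^s$ (viewed canonically as an element of $\mathcal{F}$ supported in $\{1,\dots,s\}$). For the energy I take again $X=\mathbb{R}$, $\mathcal{G}=\mathrm{id}$, and use Corollary \ref{esen}, which requires the slightly stronger Sobolev restriction $(d,p)\in\mathcal{A}'$ and yields an identical bound with constant $\overline{\mathcal{E}}$. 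For $\mathcal{G}(u_s)$ I take $X=H_0^1$ and use $\mathcal{G}\in H^{-1}=X^{\ast}$, noting that $\bm{y}_s\mapsto u(\bm{y}_s;\bm{0})$ inherits separate complex analyticity and the bound $\|\partial^{\bm{\nu}}u_s(\bm{y}_s)\|_{H_0^1}\leq \overline{u}(|\bm{\nu}|!)\bm{\beta}^{\bm{\nu}}$ from Theorem \ref{main}.

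Next I would verify the summability hypothesis. The assumption $(\|b_j\|_{L^\infty})_{j\geq 1}\in\ell^q$ translates to $\bm{\beta}\in\ell^q$ since $\beta_j=C_{\bm{\beta}}\|b_j\|_{L^\infty}$; the extra condition $\sum_{j\geq 1}\beta_j<\sqrt{6}$ for the $q=1$ endpoint is assumed by hypothesis. After possibly reindexing so that $\bm{\beta}$ is decreasing (which does not change any of the relevant quantities), all hypotheses of Lemma \ref{gen} are met. A single application of Lemma \ref{gen} then delivers, in each of the three cases, a generating vector $\bm{z}\in\mathbb{N}^s$ constructed by the CBC algorithm and an $s$-independent constant such that the randomly shifted root-mean-square error is bounded by $C_{i,\alpha} N^{-\alpha}$ with the stated exponent $\alpha=1-\delta$ for $q\in(0,2/3]$ and $\alpha=1/q-1/2$ for $q\in(2/3,1]$.

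There is essentially no conceptual obstacle; the only mild subtlety is that Lemma \ref{gen} is stated for a generic Banach space $X$ and general $\mathcal{G}\in X^{\ast}$, so I must emphasize that the scalar cases ($\lambda_s$, $\mathcal{E}_s$) correspond to $X=\mathbb{R}$ with $\mathcal{G}$ the identity, while the $\mathcal{G}(u_s)$ case uses $X=H_0^1$ and the given $\mathcal{G}\in H^{-1}$. Because the three CBC constructions are carried out independently, the generating vectors and constants $C_{1,\alpha},C_{2,\alpha},C_{3,\alpha}$ are allowed to differ, which matches the statement. Combining the three bounds finishes the proof.
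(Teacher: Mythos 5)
Your proposal is correct and matches the paper's approach exactly: the paper gives no separate proof, stating only that the theorem follows by combining Theorem \ref{main} (and Corollary \ref{esen}) with Lemma \ref{gen}, which is precisely the bookkeeping you carry out with $X=\mathbb{R}$, $\mathcal{G}=\mathrm{id}$ for the scalar quantities and $X=H_0^1$, $\mathcal{G}\in H^{-1}$ for the eigenfunction.
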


\subsection{Total error.}
\label{total error} We discussed the dimension truncation and the QMC error in the last two subsections. In this section, we combine those results to obtain the total error bound for estimating $\mathbb{E}_{\bm{y}}[\lambda]$ and $\mathbb{E}_{\bm{y}}[\mathcal{G}(u)]$ for any $\mathcal{G}\in H^{-1}$. We first present a general case.\\

\begin{lemma}
\label{gtotal}
Let $f: U\rightarrow X$ be given where $X$ is a given normed vector space with norm $\|\cdot\|_X$. Suppose that $f$ is analytic for any of its restrictions to a finite-dimensional domain and that $\|\partial^{\bm{\nu}}f(\bm{y})\|_X\leq C(|\bm{\nu}|!)\bm{\beta}^{\bm{\nu}}$ for some constant $C>0$, for some $\varepsilon\in [0,1)$ and for some decreasing sequence $\bm{\beta}\in \ell^q$.   Then, for sufficiently large $s\in \mathbb{N}$ and for any prime number $N\in \mathbb{N}$, there is a generating vector $\bm{z}\in \mathbb{N}^s$ such that, for any $\mathcal{G}\in X^*$ , 
\begin{align}
\sqrt{
\mathbb{E}_{\bm{\Delta}}
\left[
\left|
\mathbb{E}_{\bm{y}}[\mathcal{G}(f)]-Q_{N,s}\mathcal{G}(f_s)
\right|^2
\right]
}
\leq 
C_\alpha \left( T(s)+N^{-\alpha} \right),
\end{align}
where  
\begin{align}
T(s)=
\begin{cases}
s^{-\frac{2}{q}+1}, &q\in (0,1),\\
\left(\sum_{j>s} \beta_j \right)^2, &q=1,
\end{cases}
\end{align}
and
\begin{align}
\alpha=
\begin{cases}
1-\delta, &q\in \left(0,\frac{2}{3} \right],\\
\frac{1}{q}-\frac{1}{2}, &q\in \left( \frac{2}{3},1 \right],
\end{cases}
\end{align}
for arbitrary $\delta\in \left(0,\frac{1}{2} \right)$ and for some $s$ independent constants $C_{\alpha}>0$. When $q=1$, we additionally assume that $\sum_{j\geq 1}\beta_j<\sqrt{6}$.
\end{lemma}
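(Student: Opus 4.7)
The plan is to decompose the total error by the triangle inequality into a dimension truncation piece and a finite-dimensional QMC piece, and then invoke Lemma \ref{gtrunc} and Lemma \ref{gen} respectively. Concretely, I would write
\begin{align*}
\mathbb{E}_{\bm{y}}[\mathcal{G}(f)] - Q_{N,s}\mathcal{G}(f_s)
= \underbrace{\bigl(\mathbb{E}_{\bm{y}}[\mathcal{G}(f)] - \mathbb{E}_{\bm{y}}[\mathcal{G}(f_s)]\bigr)}_{=: A}
+ \underbrace{\bigl(\mathbb{E}_{\bm{y}}[\mathcal{G}(f_s)] - Q_{N,s}\mathcal{G}(f_s)\bigr)}_{=: B(\bm{\Delta})}.
\end{align*}
Note that $A$ is deterministic (independent of the random shift $\bm{\Delta}$), while $B(\bm{\Delta})$ depends on $\bm{\Delta}$. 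Applying Minkowski's inequality in $L^2(\bm{\Delta})$ gives
\begin{align*}
\sqrt{\mathbb{E}_{\bm{\Delta}}\bigl|\mathbb{E}_{\bm{y}}[\mathcal{G}(f)] - Q_{N,s}\mathcal{G}(f_s)\bigr|^2}
\;\leq\; |A| + \sqrt{\mathbb{E}_{\bm{\Delta}}|B(\bm{\Delta})|^2},
\end{align*}
so it suffices to bound the two pieces separately.

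For the truncation term $|A|$, I would verify the hypotheses of Lemma \ref{gtrunc} with $g(\bm{\nu})=C(|\bm{\nu}|!)$. For any fixed $k\in\mathbb{N}$, the maximum
\begin{align*}
C_k = \max_{|\bm{\nu}|\le k}\frac{C(|\bm{\nu}|!)}{2^{|\bm{\nu}|}\bm{\nu}!}
\end{align*}
is attained at some $\bm{\nu}$ supported on coordinates of equal weight and is finite (there are only finitely many distinct multinomial coefficients up to total degree $k$). The weak truncation bound \eqref{weak} then yields $|A|\le C' T(s)$, where $T(s)$ equals $T_2(s)$ from Lemma \ref{gtrunc}; in the case $q=1$ the extra assumption $\sum_{j\ge 1}\beta_j<\sqrt{6}$ is not needed here.

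For the QMC term, I would apply Lemma \ref{gen} to the truncated function $f_s:[-\tfrac12,\tfrac12]^s\to X$ with the functional $\mathcal{G}\in X^*$. Analyticity of $f_s$ is inherited from the assumed analyticity of $f$ on finite-dimensional restrictions, and the derivative bound transfers directly: for any $\bm{\nu}$ supported in $\{1,\dots,s\}$, $\|\partial^{\bm{\nu}} f_s(\bm{y}_s)\|_X = \|\partial^{\bm{\nu}} f(\bm{y}_s;\bm{0})\|_X \le C(|\bm{\nu}|!)\bm{\beta}^{\bm{\nu}}$, so the same sequence $\bm{\beta}\in\ell^q$ (and in the case $q=1$ the extra assumption $\sum\beta_j<\sqrt{6}$) carries over. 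Lemma \ref{gen} then produces a CBC-generated $\bm{z}\in\mathbb{N}^s$ and a constant $C_\alpha''>0$ independent of $s$ such that $\sqrt{\mathbb{E}_{\bm{\Delta}}|B(\bm{\Delta})|^2}\le C_\alpha'' N^{-\alpha}$ for the stated $\alpha$. Combining the two estimates and renaming the constant finishes the proof.

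The main work is really just the assembly described above; the substantive analytic content has been absorbed into Lemmas \ref{gtrunc} and \ref{gen}. The only step that requires a moment of care is checking that the derivative bound and (for $q=1$) the smallness condition on $\sum\beta_j$ descend to the truncated function $f_s$, which is immediate because truncation only removes parameters and never increases any partial derivative in absolute value.
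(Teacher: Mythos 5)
Your proposal is correct and follows essentially the same route as the paper: a triangle (Minkowski) inequality splitting the root-mean-square error into the weak truncation term, bounded by Lemma \ref{gtrunc}, and the finite-dimensional QMC term, bounded by Lemma \ref{gen}. The extra care you take in verifying that the derivative bound descends to $f_s$ and that the $\sqrt{6}$ condition is only needed for the QMC piece is a harmless (and slightly more explicit) elaboration of what the paper leaves implicit.
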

\begin{proof}
The result is obvious by Lemma \ref{gtrunc} and Lemma \ref{gen} with triangular inequality. Indeed,
\begin{align}
&\sqrt{
\mathbb{E}_{\bm{\Delta}}
\left[
\left|
\mathbb{E}_{\bm{y}}[\mathcal{G}(f)]-Q_{N,s}\mathcal{G}(f_s)
\right|^2
\right]
}\\
&\leq 
\sqrt{
\mathbb{E}_{\bm{\Delta}}
\left[
\left|
\mathbb{E}_{\bm{y}}[\mathcal{G}(f)-\mathcal{G}(f_s)]
\right|^2
\right]
}
+\sqrt{
\mathbb{E}_{\bm{\Delta}}
\left[
\left|
\mathbb{E}_{\bm{y}}[\mathcal{G}(f_s)]-Q_{N,s}\mathcal{G}(f_s)
\right|^2
\right]
},
\end{align}
and, on the right-hand side, the first and the second terms are bounded by Lemma \ref{gtrunc} and Lemma \ref{gen}, respectively. 
\end{proof}
\bigskip

The following result is just a simple application of Theorem \ref{main} to Lemma \ref{gtotal}.\\

\begin{theorem}
\label{final}
Let $(d,p)\in \mathcal{A}$ for our ground eigen-pair ($u, \lambda)$ of \eqref{semi} with Assumption 1 and let $(d,p)\in \mathcal{A}'$ for the energy $\mathcal{E}$.  Suppose that  $\left(  \|b_j\|_{L^\infty}  \right)_{j\geq 1}\in \ell^q$ for some $q\in (0,1)$,  $N\in \mathbb{N}$ is a prime and $\mathcal{G}\in H^{-1}$. Then the root-mean-square errors of  the CBC-generated randomly shifted rank 1 lattice rule approximations of $\mathbb{E}_{\bm{y}}[\lambda]$, $\mathbb{E}_{\bm{y}}\left[\mathcal{E}(\bm{y})\right]$ and $\mathbb{E}_{\bm{y}}[\mathcal{G}(u)]$ for any $\mathcal{G}\in H^{-1}$ with the corresponding generating vectors $\bm{z}\in \mathbb{N}^s$ constructed as in Lemma \ref{gen} are bounded by
\begin{align}
\sqrt{\mathbb{E}_{\Delta}\left[
\left| \mathbb{E}_{\bm{y}}[\lambda]-Q_{N,s}\lambda_s
\right|^2  \right]}\leq C_{1,\alpha}\left(T(s)+ N^{-\alpha}\right),\\
\sqrt{\mathbb{E}_{\Delta}\left[
\left| \mathbb{E}_{\bm{y}}[\mathcal{E}]-Q_{N,s}\mathcal{E}_s
\right|^2  \right]}\leq C_{2,\alpha}\left(T(s)+ N^{-\alpha}\right),
\end{align}
and
\begin{align}
\sqrt{\mathbb{E}_{\Delta}
\left[ \left| 
\mathbb{E}_{\bm{y}}[\mathcal{G}(u)]-Q_{N,s}\mathcal{G}(u_s) \right|^2
\right]
}\leq C_{3,\alpha}\left(T(s)+ N^{-\alpha}\right),
\end{align}
where $T(s)$ is defined in Theorem \ref{gtotal} and
\begin{align}
\alpha=
\begin{cases}
1-\delta, &q\in \left(0,\frac{2}{3} \right],\\
\frac{1}{q}-\frac{1}{2}, &q\in \left( \frac{2}{3},1 \right],
\end{cases}
\end{align}
for arbitrary $\delta\in \left(0,\frac{1}{2} \right)$ and for some $s$ independent constants $C_{1,\alpha},C_{2,\alpha},C_{3,\alpha}>0$.
\end{theorem}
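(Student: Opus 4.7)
The plan is to derive Theorem \ref{final} as a direct application of the general error bound in Lemma \ref{gtotal} to the three specific quantities of interest, using the derivative estimates already established in Theorem \ref{main} and Corollary \ref{esen}. The key observation is that Lemma \ref{gtotal} is stated precisely so that any separately analytic $f:U\to X$ whose mixed derivatives satisfy $\|\partial^{\bm{\nu}}f(\bm{y})\|_X\leq C(|\bm{\nu}|!)\bm{\beta}^{\bm{\nu}}$ with $\bm{\beta}\in\ell^q$ decreasing inherits the combined truncation-plus-QMC error bound of the form $C_\alpha(T(s)+N^{-\alpha})$. Thus the proof amounts to verifying those hypotheses in each of the three cases.

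First, I would handle $\mathbb{E}_{\bm{y}}[\lambda]$ by taking $X=\mathbb{R}$ and letting $\mathcal{G}$ be the identity in $X^*=\mathbb{R}$. Theorem \ref{anal} gives separate analyticity of $\lambda$, while Theorem \ref{main} provides the bound $|\partial^{\bm{\nu}}\lambda(\bm{y})|\leq\overline{\lambda}(|\bm{\nu}|!)\bm{\beta}^{\bm{\nu}}$, matching the hypothesis with $C=\overline{\lambda}$. Since $(\|b_j\|_{L^\infty})_{j\geq1}\in\ell^q$ and $\beta_j=C_{\bm{\beta}}\|b_j\|_{L^\infty}$, also $\bm{\beta}\in\ell^q$; I rearrange its entries in decreasing order (which is harmless because both $T(s)$ and the derivative bound are permutation-invariant under relabeling the coordinates $y_j$). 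Lemma \ref{gtotal} then yields the stated bound with constant $C_{1,\alpha}$. The treatment of $\mathbb{E}_{\bm{y}}[\mathcal{E}]$ is identical, with $\overline{\mathcal{E}}$ replacing $\overline{\lambda}$ and Corollary \ref{esen} supplying the derivative bound; note that this case requires the stricter admissibility $(d,p)\in\mathcal{A}'$ inherited from the corollary.

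For $\mathbb{E}_{\bm{y}}[\mathcal{G}(u)]$ I take $X=H_0^1$, so $X^*=H^{-1}$ and the given $\mathcal{G}\in H^{-1}$ fits directly into Lemma \ref{gtotal}. Separate analyticity of $u:U\to H_0^1$ is again from Theorem \ref{anal}, and Theorem \ref{main} gives $\|\partial^{\bm{\nu}}u(\bm{y})\|_{H_0^1}\leq\overline{u}(|\bm{\nu}|!)\bm{\beta}^{\bm{\nu}}$, which is exactly the required form with constant $\overline{u}$. Applying Lemma \ref{gtotal} then produces the third bound with constant $C_{3,\alpha}$.

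There is really no main obstacle here; the work has been done in establishing the mixed-derivative bounds and in proving Lemma \ref{gtotal} in the abstract setting. The only small bookkeeping points are: (i) confirming that the constraint $\sum_{j\geq1}\beta_j<\sqrt{6}$ needed at the borderline $q=1$ is assumed as part of the theorem's statement whenever relevant; (ii) noting that the admissibility sets $\mathcal{A}$ and $\mathcal{A}'$ are used appropriately in the eigenpair versus energy cases; and (iii) observing that all three constants $C_{1,\alpha},C_{2,\alpha},C_{3,\alpha}$ depend on $\alpha$, $q$, $\overline{\lambda}$, $\overline{u}$, $\overline{\mathcal{E}}$, and the $\ell^q$ tail of $\bm{\beta}$, but not on the truncation level $s$, exactly because the same independence is built into Lemma \ref{gtotal}.
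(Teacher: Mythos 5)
Your proposal matches the paper's approach: the paper itself prefaces the theorem with the remark that it is ``just a simple application of Theorem~\ref{main} to Lemma~\ref{gtotal}'' and gives no further proof, and you have filled in exactly that verification by choosing $X=\mathbb{R}$ (with $\mathcal{G}=\mathrm{id}$) for $\lambda$ and $\mathcal{E}$, and $X=H_0^1$ with $X^*=H^{-1}$ for $u$, invoking the bounds of Theorem~\ref{main} and Corollary~\ref{esen}. This is correct and complete; the only cosmetic point is that the theorem hypothesis restricts to $q\in(0,1)$, so the $\sum_{j\geq1}\beta_j<\sqrt{6}$ side-condition for $q=1$ is not actually needed here.
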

\bigskip 

Note that the error convergence rate we obtained from Theorem \ref{final} is identical to that of the linear eigenvalue problem in \cite{alex2019} even with the nonlinearity. Furthermore, throughout this section, we showed that it is mainly due to the form of the estimation we obtained in Theorem \ref{main}. Similarly, we can expect that any solution to PDEs with stochastic coefficients with the same form of the upper bound estimation as in Theorem \ref{main} would satisfy the same QMC error convergence rate. For example, (4.2) of \cite{sch1} shows that an elliptic equation with parametric diffusion coefficients satisfies the same form of the estimation. Furthermore, by the result of \cite{alexey}, the parametric linear elliptic eigenvalue problems with more various kinds of parametric coefficients have the same form of the estimation which will allow same QMC error convergence rate.

\textbf{Acknowledgement. }The author thanks professor Yulong Lu(University of Massachusetts Amherst) for all the helpful discussions and support during this research. The author also thanks the NSF for the partial support via the award DMS-2107934.

\appendix
\section{The proof of Theorem \ref{exiuni}}
\label{appendixa}

The proof of Theorem \ref{exiuni} is a combination of the following lemmata. The proofs of the Lemmta \ref{SC}-\ref{sol} and the Corollary \ref{absol} are simple adaptations of the appendix A in \cite{lieb2000}. Because we have more various nonlinearities and we  use bounded physical domain instead of whole domain used in \cite{lieb2000}, we present the adapted proof here for the completeness.  Throughout this appendix, recall the definition of $E$ in \eqref{energy} and we will use $H=\left\{ v\in H_0^1 : \|v\|_{L^2}=1 \right\}$ as defined in Section \ref{preliminary}. Also, we focus on case of $p>1$ because $p=1$ is linear and we can simply use Lax-Milgram theorem. The proof of Theorem \ref{exiuni} will be presented after all necessary tools are given.
\\

\begin{lemma} 
\label{SC}   For $w\geq 0$, $\sqrt{w}\in H$, define $F(w)=E(\sqrt{w})$. Then $F(w)$ is strictly convex with respect to $w$.\\
\end{lemma}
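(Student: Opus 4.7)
The plan is to change variables from $u$ to $w = u^2$ in each of the three terms of $E$ and analyze convexity term by term. If $u = \sqrt{w}$ with $w \geq 0$, then $u^2 = w$, $|u|^{p+1} = w^{(p+1)/2}$, and, at least formally, $\nabla u = \nabla w / (2\sqrt{w})$ so that $|\nabla u|^2 = |\nabla w|^2/(4w)$. Thus
\begin{equation*}
F(w) = \int a \, \frac{|\nabla w|^2}{4w} + \int b\, w + \frac{2\eta}{p+1} \int w^{(p+1)/2} =: F_1(w) + F_2(w) + F_3(w).
\end{equation*}
Convexity of $F$ as a functional on $\{w \geq 0 : \sqrt{w}\in H_0^1\}$ will then be verified piece by piece, with strict convexity pulled out of $F_3$.

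The term $F_2$ is linear in $w$ and thus both convex and concave. The term $F_3$ is strictly convex: since $p \geq 2$, the map $t \mapsto t^{(p+1)/2}$ is strictly convex on $[0,\infty)$, and pointwise strict convexity, integrated against a nonnegative measure, yields strict convexity of $F_3$ (with equality in the convexity inequality only when $w_0 = w_1$ a.e.). The key step is $F_1$. Here I would invoke the classical fact that the perspective function $(w,\xi) \mapsto |\xi|^2/w$, defined for $w>0$ and $\xi \in \mathbb{R}^d$, is jointly convex: this is immediate because $\xi \mapsto |\xi|^2$ is convex and the perspective of a convex function is convex. Applying this pointwise with $\xi = \nabla w(x)$, multiplying by the nonnegative weight $a(x)/4$, and integrating over $\Omega$ gives convexity of $F_1$ on the set where $w > 0$ a.e.; the case of vanishing $w$ is handled by the standard convention that $|\nabla w|^2/w = 0$ when $\nabla w = 0$ and $= +\infty$ otherwise, which preserves convexity in the extended-real sense.

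Putting the three pieces together, for $w_0 \neq w_1$ in the admissible class and $t \in (0,1)$,
\begin{equation*}
F(tw_0 + (1-t)w_1) \leq tF_1(w_0) + (1-t)F_1(w_1) + tF_2(w_0) + (1-t)F_2(w_1) + tF_3(w_0) + (1-t)F_3(w_1),
\end{equation*}
with the inequality already strict because it is strict in the $F_3$ component (if $w_0 \neq w_1$, strict pointwise convexity of $t \mapsto t^{(p+1)/2}$ forces strict inequality after integration on a set of positive measure).

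The main obstacle, and the step I would treat with the most care, is the rigorous convexity of $F_1$: one has to justify the perspective-function computation on the Sobolev class $\{\sqrt{w} \in H_0^1\}$, handle the zero set $\{w=0\}$ where the pointwise integrand is singular, and confirm that the convex combination $tw_0 + (1-t)w_1$ still satisfies $\sqrt{tw_0 + (1-t)w_1} \in H_0^1$ so that the inequality is not vacuous. The cleanest way is to work with the identity $w|\nabla \sqrt{w}|^2 = |\nabla w|^2/4$ on $\{w>0\}$ and zero elsewhere, check that $F_1$ is lower semicontinuous, and then verify convexity by the perspective argument on regularized $w_\epsilon = w + \epsilon$ before passing to the limit $\epsilon \downarrow 0$.
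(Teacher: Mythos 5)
Your proposal is correct and follows essentially the same route as the paper: the same three-term decomposition, linearity of the potential term, strict convexity of the $w^{(p+1)/2}$ term, and convexity of the kinetic term via the joint convexity of $(w,\xi)\mapsto|\xi|^2/w$. The paper proves that last point by a direct Cauchy--Schwarz computation written in terms of $\nabla\sqrt{w_i}$ (which is exactly the proof that the perspective function is convex), and because the final bound involves only $\alpha\int a|\nabla\sqrt{w_1}|^2+(1-\alpha)\int a|\nabla\sqrt{w_2}|^2$ it simultaneously shows $\sqrt{\alpha w_1+(1-\alpha)w_2}\in H$, so the zero-set regularization you flag as the delicate step is not actually needed.
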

\begin{proof}
First,  observe that
\begin{align}
F(w) = \int a|\nabla \sqrt{w}|^2  +\int b w+\frac{2\eta}{p+1}\int |\sqrt{w}|^{p+1},
\end{align}
and define $F_1(w)=\int a|\nabla \sqrt{w}|^2$, $F_2(w)=\int bw$ and $\frac{2\eta}{p+1}\int |\sqrt{w}|^{p+1}$. Note that $F_2(w)$ is linear so is convex in $w$ and the $F_3(w)$ is strictly convex in $w$ because $\frac{p+1}{2}>1$. Thus, we only need to show that $F_1(w)$ is convex. 

Let $w_1, w_2$ be given such that $\sqrt{w_1},\sqrt{w_2}\in H$ and  $w_1\neq w_2$. Let $\sqrt{w}:=\sqrt{{\alpha w_1+(1-\alpha)w_2}}$. We want to show that $\sqrt{w}\in H$ and check $F_1(w)\leq \alpha F_1(w_1)+(1-\alpha)F_1(w_2)$ for any $\alpha\in [0,1]$. First, we can easily observe that
\begin{align}
\int |\sqrt{w}|^2 = \alpha \int|\sqrt{w_1}|^2 +(1-\alpha)\int |\sqrt{w_2}|^2 =1.
\end{align}
 Moreover, observe that
\begin{align}
F_1(w)=\int a|\nabla \sqrt{w}|^2&
=\int a\left| \nabla \sqrt{\alpha w_1 +(1-\alpha)w_2}\right|^2 \notag\\
&= \int a \frac{(\alpha\nabla w_1 +(1-\alpha)\nabla w_2)^2}{4|w|}\notag\\
&=\int a\frac{(2\alpha \sqrt{w_1} (\nabla \sqrt{w_1})+2(1-\alpha) \sqrt{w_2}(\nabla \sqrt{w_2}))^2}{4|w|} \notag \\
&\leq \int a\frac{|w|(\alpha |\nabla \sqrt{w_1}|^2+(1-\alpha)|\nabla \sqrt{w_2}|^2)}{|w|}\notag\\
&=\alpha \int a|\nabla \sqrt{w_1}|^2 +(1-\alpha)\int a|\nabla \sqrt{w_2}|^2 \notag\\
& <\infty,
\end{align} 
where the first inequality is by Cauchy-Schwarz inequality which is $(a'b'+c'd')^2\leq (a'^2+c'^2)(b'^2+d'^2)$ for all $a',b',c',d'\in \mathbb{R}$. It implies that $w\in H$ and $F_1(w)$ are convex in $w$ as we desired.
 \end{proof}
\bigskip

\begin{lemma} 
\label{M} There exists  $u\in H$ satisfying \eqref{genergy} that is unique up to complex modulus $|\cdot|$.
\end{lemma}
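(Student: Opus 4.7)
The plan is to establish existence by the direct method of the calculus of variations and uniqueness up to modulus by leveraging the strict convexity from Lemma \ref{SC} together with the diamagnetic inequality $|\nabla |u|| \leq |\nabla u|$ that holds a.e.\ for complex-valued $H_0^1$ functions.

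For existence, the first observation is that $E[u] \geq 0$ on $H$ since $a \geq a_{min} > 0$, $b \geq 0$, and $\eta > 0$, so $\mathcal{E} \in [0,\infty)$. Picking a minimizing sequence $(u_n) \subset H$, the bound $a_{min}\|\nabla u_n\|_{L^2}^2 \leq E[u_n]$ together with $\|u_n\|_{L^2}=1$ makes $(u_n)$ bounded in $H_0^1$. Extracting a subsequence converging weakly in $H_0^1$ to some $u^{*}\in H_0^1$, the Rellich--Kondrachov compact embedding yields strong $L^2$ convergence, so $\|u^{*}\|_{L^2}=1$ and $u^{*}\in H$. Each of the three terms of $E$ is convex and strongly continuous on the relevant Lebesgue space, hence weakly lower semicontinuous along the minimizing sequence, giving $E[u^{*}] \leq \liminf_n E[u_n] = \mathcal{E}$; combined with $u^{*}\in H$, this forces $E[u^{*}]=\mathcal{E}$.

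For uniqueness up to modulus, suppose $u_1,u_2\in H$ both attain $\mathcal{E}$. The diamagnetic inequality gives $|u_i|\in H_0^1$ with $|\nabla |u_i|| \leq |\nabla u_i|$ a.e., and since $\||u_i|\|_{L^2}=1$ we conclude $|u_i|\in H$ and $E[|u_i|]\leq E[u_i]=\mathcal{E}$, so each $|u_i|$ is itself a minimizer. Setting $w_i := |u_i|^2 \geq 0$, one has $\sqrt{w_i}=|u_i|\in H$ and $F(w_i)=\mathcal{E}$. If $w_1 \neq w_2$ in $L^1$, then the midpoint $w := \tfrac{1}{2}(w_1+w_2)$ satisfies $\sqrt{w}\in H$ (as established inside the proof of Lemma \ref{SC}), and strict convexity of $F$ yields $F(w) < \tfrac{1}{2}(F(w_1)+F(w_2)) = \mathcal{E}$, contradicting the definition of $\mathcal{E}$. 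Hence $w_1 = w_2$, i.e., $|u_1|=|u_2|$.

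The main technical subtlety will be weak lower semicontinuity of the $L^{p+1}$ term at the Sobolev-critical endpoint of $\mathcal{A}$, where the compact embedding $H_0^1 \hookrightarrow L^{p+1}$ can degenerate; this is resolved because the functional $u \mapsto \int |u|^{p+1}$ is convex and continuous on $L^{p+1}$, while weak $H_0^1$ convergence yields weak $L^{p+1}$ convergence via the continuous Sobolev embedding, so lower semicontinuity follows regardless. A secondary point is applying the diamagnetic inequality when $u$ vanishes on a set of positive measure, which is standard once one regularizes via $\sqrt{|u|^2+\varepsilon^2}$ and lets $\varepsilon\downarrow 0$. With these two routine points handled, strict convexity from Lemma \ref{SC} immediately closes the uniqueness argument.
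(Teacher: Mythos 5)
Your proposal is correct and follows essentially the same route as the paper: direct method with weak $H_0^1$ compactness and Rellich for the constraint, then strict convexity of $F(w)=E[\sqrt{w}]$ from Lemma \ref{SC} applied to $w_i=|u_i|^2$ for uniqueness up to modulus. Your explicit invocation of the diamagnetic inequality to justify that $|u_i|$ is itself a minimizer is a welcome addition — the paper's chain $\alpha E[|v_1|]+(1-\alpha)E[|v_2|]=\mathcal{E}$ tacitly uses $E[|v_i|]=E[v_i]$, which it only records later in Corollary \ref{absol}.
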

\begin{proof}
Suppose that $(u_n)_{n\geq 1}\subset H$ be such that 
\begin{align}
\lim_{n\rightarrow \infty}E[u_n] = \mathcal{E}=\inf_{u\in H}E[u].
\end{align}
This implies that, there exists $C>0$ such that ${E[u_n]}\leq C$ for all $n\geq 1$. Note that each term in $E[u_n]$ is a power of variations of $H_0^1$, $L^2$, and $L^{p+1}$ norms, respectively.   Thus, $u_n$ is contained in a  weakly compact subset of those normed spaces. It implies a subsequence still denoted by  $(u_n)_{n\geq 1}$ and $u$ such that $u_n$ weakly converges to $u$.
Then, by the weakly lower semicontinuity of the norms,
\begin{align}
\mathcal{E}=\underset{n\rightarrow \infty}{\mathrm{liminf}} E[u_n]\geq E[u]\geq \inf_{v\in \textcolor{blue}{H}} E[v]=\mathcal{E},
\end{align} 
which implies that $E[u]=\mathcal{E}$.
 Now, we show that $\|u\|_{L^2}=1$. Note that $H_0^1$ is compactly embedded into $L^2$ by Rellich compactness theorem. Thus $(u_n)_{n\geq 1}$ contained in a bounded set in $H_0^1$ is contained in a compact subset of $L^2$. Therefore, there is a subsequence of $(u_n)_{n\geq 1}$ still denoted by $(u_n)_{n\geq 1}$ converges to $u$ in $L^2$. Therefore, 
\begin{align}
1=\lim_{n\rightarrow \infty} \left\| u_n \right\|_{L^2} = \left\|u  \right\|_{L^2},
\end{align}
Finally, if there exists two different solution $v_1$ and $v_2$ such that $|v_1|\neq |v_2|$, then by Lemma \ref{SC}, observe that, for given $\alpha \in (0,1)$,
\begin{align}
\mathcal{E} \leq E[\sqrt{\alpha |v_1|^2 + (1-\alpha)|v_2|^2 }]&=F[\alpha |v_1|^2 + (1-\alpha)|v_2|^2 ]\\
&<\alpha F[|v_1|^2]+ (1-\alpha)F[|v_2|^2]
\\
&=\alpha E[|v_1|]+(1-\alpha)E[|v_2|]
\\
& = \mathcal{E},
\end{align}
which is a contradiction. Therefore, $|v_1|=|v_2|$.
\end{proof}
\bigskip

\begin{lemma}
\label{sol}
$u$ is the minimizer of \eqref{genergy} if and only if $u$ satisfies \eqref{semi1} with $\lambda = \mathcal{E} +\eta\frac{p-1}{p+1}\int |u|^{p+1}. $
\end{lemma}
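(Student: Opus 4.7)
The plan is a standard calculus of variations argument exploiting the $L^2$ normalization constraint via a Lagrange-multiplier computation, together with a direct algebraic identification of the multiplier.

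For the forward direction, I will suppose $u \in H$ minimizes $E$ and perturb along admissible directions. Given any $v \in H_0^1$, the path $u_t := (u + tv)/\|u+tv\|_{L^2}$ lies in $H$ for small $t$, so $\frac{d}{dt}E[u_t]\big|_{t=0}=0$. Computing this derivative (using that $E$ is Frechet differentiable on $H_0^1$ because $(d,p)\in\mathcal{A}$ gives the necessary Sobolev embedding to differentiate the $L^{p+1}$ term) and carrying out the normalization derivative produces, after rearrangement, the weak equation
\begin{equation*}
\int a\,\nabla u\cdot\nabla v + \int b\,u\,v + \eta\int |u|^{p-1}u\,v \;=\; \lambda \int u\,v,
\end{equation*}
for every $v\in H_0^1$, where $\lambda$ is the Lagrange multiplier. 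Since $a\in C^1$, $b\in L^\infty$ and $\partial\Omega$ is $C^2$, elliptic regularity promotes this weak equation to the classical strong form \eqref{semi1}.

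To identify $\lambda$, I will test the strong equation with $u$ itself and integrate by parts, giving
\begin{equation*}
\int a|\nabla u|^2 + \int b\,|u|^2 + \eta\int |u|^{p+1} \;=\; \lambda\int |u|^2 \;=\; \lambda,
\end{equation*}
where the last equality uses $u\in H$. Subtracting the identity $E[u]=\mathcal{E}$, written as
\begin{equation*}
\int a|\nabla u|^2 + \int b\,|u|^2 + \tfrac{2\eta}{p+1}\int |u|^{p+1} = \mathcal{E},
\end{equation*}
yields $\lambda = \mathcal{E} + \eta\tfrac{p-1}{p+1}\int |u|^{p+1}$, as claimed.

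For the reverse direction, if $u\in H$ satisfies \eqref{semi1} with some $\lambda$, the same test-with-$u$ computation gives $E[u] = \lambda - \eta\tfrac{p-1}{p+1}\int |u|^{p+1}$. Substituting the prescribed value of $\lambda$ shows $E[u]=\mathcal{E}$, so $u$ is a minimizer. The main subtlety is just keeping the Lagrange-multiplier calculation clean through the normalization denominator; the rest is algebraic. The existence of $u\in H$ (and uniqueness up to modulus) is already provided by Lemma~\ref{M}, so no fixed-point or compactness work is needed here.
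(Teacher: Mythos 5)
Your proposal is correct and reaches the same conclusion by essentially the same calculus-of-variations mechanism, with one cosmetic difference in how the constraint is enforced. The paper perturbs the unconstrained Lagrange functional $E[u+\varepsilon f]-\lambda\|u+\varepsilon f\|_{L^2}^2$ with a free multiplier $\lambda$, while you perturb along the renormalized path $u_t=(u+tv)/\|u+tv\|_{L^2}$ so that the multiplier emerges automatically from the derivative of the normalization; both yield the same Euler--Lagrange weak equation, and both identify $\lambda$ by testing with $u$ itself using $E[u]=\mathcal{E}$. Two small points of divergence worth noting. First, the paper carries out the variation over $C_0^\infty$ and explicitly separates the real and imaginary parts of the solution (equations \eqref{er} and \eqref{ei}), because the ambient function space is complex and the functional $E$ involves moduli; your sketch quietly treats $u$ and $v$ as if real, so to match the paper's generality you would need to take perturbations by $v$ and $iv$ separately before recombining. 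Second, you invoke elliptic regularity to promote the weak equation to the classical form, but that step is deliberately kept out of this lemma in the paper --- Lemma \ref{sol} only asserts the equation in the distributional sense, and the $H^2$ regularity is proved separately in Lemma \ref{regul}; including it here is harmless but out of scope. Your treatment of the converse (test with $u$, conclude $E[u]=\mathcal{E}$, hence minimizer) is cleaner and more explicit than the paper's one-line ``reverse the argument,'' which is a mild improvement in exposition.
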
  
\bigskip 

\begin{proof}
Suppose that $u\in H$ is the minimizer of \eqref{genergy} with minimum $\mathcal{E}$.  Let $f\in C_0^\infty$ (space of smooth functions with compact support in $\Omega$) be given. Then, we should have
\begin{align}
\frac{d}{d\varepsilon} \left[   E[u+\varepsilon f] -\lambda \left\| u+\varepsilon f  \right\|_{L^2}^2 \right] \bigg|_{\varepsilon=0} =0,
\end{align} 
where $\lambda$ is the Lagrange multiplier corresponding to the normalization restriction.

Observe that, for any real-valued $f\in C_0^\infty$, 

\begin{align}
\label{er}
&\frac{d}{d\varepsilon} \left[   E[u+\varepsilon f] -\lambda \left\| u+\varepsilon f  \right\|_{L^2}^2 \right] \bigg|_{\varepsilon=0} \notag\\ 
&=\lim_{\varepsilon\rightarrow 0} \frac{E[u+\varepsilon f]-\lambda \|u+\varepsilon f\|^2_{L^2} - E[u]+\lambda \|u\|^2_{L^2}}{\varepsilon}\notag\\
&= 2\int (-\nabla\cdot (a\nabla Re(u))+b Re(u) + \eta |u|^{p-1} Re(u)-\lambda Re(u))f\notag\\
&=0,
\end{align}
Similarly, apply $if$ instead of $f$ from last argument,  we obtain
\begin{align}
\label{ei}
2\int (-\nabla\cdot (a\nabla Im(u))+b Im(u) + \eta |u|^{p-1} Im(u)-\lambda Im(u))f=0,
\end{align}
By combining \eqref{er} and \eqref{ei}, we obtain
\begin{align}
\label{weakeq1}
\int (-\nabla\cdot(a\nabla u) +b u +\eta |u|^{p-1} u-\lambda u) f =0,  &&\forall f\in C_0^\infty,
\end{align}
where $f$ is complex-valued. Therefore, $u$ satisfied \eqref{semi1} in the sense of distribution. The converse statement can be proved just by reversing the argument above, and the exact value of $\lambda$ is clear. Indeed, observe that, if $u$ is the solution to \eqref{exiuni}, then using, by the denseness of $C_0^\infty$ in $H_0^1$, $f=u$ in \eqref{weakeq1}, we obtain,
\begin{align}
\label{lande}
\lambda 
&=
 \lambda \int |u|^2
 \notag\\
 & = \int |\nabla u|^2 +\int b|u|^2+\eta \int |u|^{p+1}
 \notag\\
 &=\mathcal{E}-\frac{2\eta}{p+1}\int |u|^{p+1}+\eta\int |u|^{p+1}
 \notag\\
 &=\mathcal{E}+\eta\frac{p-1}{p+1}\int |u|^{p+1},
\end{align}
as desired.
\end{proof}
\bigskip

\begin{corollary}
\label{absol}
If $u$ is a minimizer of \eqref{energy}, then $|u|$ is also a minimizer, and so is the solution to \eqref{semi1}. 
\end{corollary}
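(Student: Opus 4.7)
The plan is to compare $E[u]$ and $E[|u|]$ term by term and observe that the potential and nonlinear contributions are identical, so only the gradient term requires work. Since $\bigl| |u| \bigr| = |u|$ pointwise, we have $\| |u| \|_{L^2} = \|u\|_{L^2} = 1$, which places $|u|$ in $H$ once we confirm $|u| \in H_0^1$; this membership, together with the formula for $\nabla |u|$, is the only nontrivial input.

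First I would invoke the standard regularity fact for moduli of Sobolev functions: if $u = u_1 + iu_2 \in H_0^1$ with real-valued parts $u_j \in H_0^1$, then $|u| = \sqrt{u_1^2 + u_2^2} \in H_0^1$ (in particular the zero boundary trace is preserved) with weak gradient
\begin{align*}
\nabla |u| \;=\; \frac{\operatorname{Re}(\bar u \, \nabla u)}{|u|} \quad \text{on } \{|u|>0\}, \qquad \nabla |u| \;=\; 0 \quad \text{on } \{u=0\}.
\end{align*}
A Cauchy--Schwarz estimate on $\{|u|>0\}$ then gives the pointwise Kato-type inequality
\begin{align*}
\bigl|\nabla |u|\bigr|^2 \;=\; \frac{\bigl|\operatorname{Re}(\bar u\,\nabla u)\bigr|^2}{|u|^2} \;\leq\; \frac{|\bar u|^2 \, |\nabla u|^2}{|u|^2} \;=\; |\nabla u|^2,
\end{align*}
and the inequality is trivial on $\{u=0\}$. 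Because $a(x) \geq a_{min} > 0$, multiplying by $a$ and integrating yields $\int a |\nabla |u||^2 \leq \int a |\nabla u|^2$.

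Next I would assemble the three terms: the potential integral $\int b |u|^2$ and the nonlinear integral $\tfrac{2\eta}{p+1}\int |u|^{p+1}$ are unchanged when $u$ is replaced by $|u|$, since each depends only on the modulus. Combining with the gradient estimate gives $E[|u|] \leq E[u] = \mathcal{E}$. Because $|u| \in H$, the reverse inequality $E[|u|] \geq \mathcal{E}$ holds by the definition of $\mathcal{E}$, so $E[|u|] = \mathcal{E}$ and $|u|$ is itself a minimizer. Finally, applying Lemma~\ref{sol} to the minimizer $|u|$ produces the corresponding eigenvalue $\lambda = \mathcal{E} + \eta\tfrac{p-1}{p+1}\int |u|^{p+1}$ and shows $|u|$ solves \eqref{semi1}.

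The only subtle step is the chain-rule formula for $\nabla |u|$ for a complex-valued $H_0^1$ function; all other manipulations are direct algebraic comparisons. I would either cite this as a standard consequence of Stampacchia's theorem applied coordinate-wise, or justify it by approximating $|u|$ by $\sqrt{|u|^2 + \varepsilon^2} - \varepsilon$ and passing to the limit, which makes both the $H_0^1$ membership and the inequality $|\nabla |u|| \leq |\nabla u|$ transparent.
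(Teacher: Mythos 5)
Your proposal is correct and follows essentially the same route as the paper: compare $E[|u|]$ with $E[u]$ term by term, note the potential and nonlinear integrals are unchanged, control the gradient term, and invoke Lemma~\ref{sol}. In fact you are more careful than the paper on the one nontrivial point — the paper asserts the pointwise equality $\nabla|u| = \frac{u}{|u|}\nabla u$, which is only valid for real-valued $u$, whereas your diamagnetic inequality $|\nabla|u||\leq|\nabla u|$ combined with the minimality of $\mathcal{E}$ handles the complex-valued case correctly.
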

\bigskip 

\begin{proof}
Suppose that $u$ is the minimizer of \eqref{energy}, and observe that
\begin{align}
E[|u|]&=\int a|\nabla |u||^2 +\int b|u|^2 +\frac{2\eta}{p+1}\int |u|^{p+1}\notag\\
& = \int a\left| \frac{u}{|u|}\nabla u \right|^2 +\int b |u|^2 +\frac{2\eta}{p+1}\int |u|^{p+1} =E[u].
\end{align}

Therefore, $|u|$ is also a minimizer of \eqref{energy} so is the solution to \eqref{semi1} by Lemma \ref{sol}.
\end{proof}
\bigskip

\begin{lemma}
\label{regul} Let $(d,p)\in \mathcal{A}$ and  $u$ be the solution to \eqref{semi1}. Then $u\in H^2$ and so is continuous. 
\end{lemma}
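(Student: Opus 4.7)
The plan is to bootstrap from $u \in H_0^1$ to $u \in H^2$ via standard elliptic regularity applied to the linear equation
\begin{align*}
-\nabla\cdot(a\nabla u) = \lambda u - bu - \eta |u|^{p-1}u,
\end{align*}
and then conclude continuity from the Sobolev embedding $H^2 \hookrightarrow C(\overline{\Omega})$, valid because $d \leq 3$.

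First I would verify that the right-hand side lies in $L^2(\Omega)$. The terms $\lambda u$ and $bu$ are clearly in $L^2$ because $u \in H_0^1 \subset L^2$, $\lambda \in \mathbb{R}$, and $b \in L^\infty$. The only nontrivial contribution is the nonlinear term $\eta|u|^{p-1}u$, whose $L^2$ norm equals $\eta \|u\|_{L^{2p}}^{p}$. This is precisely where the admissibility condition $(d,p) \in \mathcal{A}$ enters: when $d \in \{1,2\}$ the Sobolev embedding $H_0^1 \hookrightarrow L^q$ holds for every $q < \infty$, and when $d = 3$ we have $p \leq d/(d-2) = 3$, so $2p \leq 2d/(d-2)$ and hence $H_0^1 \hookrightarrow L^{2p}$. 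In either case, $\|u\|_{L^{2p}}$ is finite, so the full right-hand side is in $L^2(\Omega)$.

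Next, since $\Omega$ has $C^2$ boundary, $a \in C^1(\overline{\Omega})$ with $a \geq a_{\min} > 0$, and $u \in H_0^1(\Omega)$ solves $-\nabla\cdot(a\nabla u) = f$ with $f \in L^2(\Omega)$ in the weak sense, standard interior-plus-boundary elliptic regularity (e.g.\ Theorem~8.12 in Gilbarg-Trudinger, or Evans Chapter~6.3) yields $u \in H^2(\Omega)$ with the quantitative estimate $\|u\|_{H^2} \leq C(\|f\|_{L^2} + \|u\|_{L^2})$. Finally, since $d \leq 3 < 4 = 2 \cdot 2$, the Sobolev embedding gives $H^2(\Omega) \hookrightarrow C(\overline{\Omega})$, so $u$ is continuous on $\overline{\Omega}$.

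The only real subtlety is confirming that the nonlinear term is square-integrable, and this is exactly what the definition of $\mathcal{A}$ was engineered to guarantee, as already noted in the discussion following \eqref{dp2}. Once that is in hand, the rest is a textbook application of elliptic regularity and Sobolev embedding, with no additional structural input needed.
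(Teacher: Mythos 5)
Your proof is correct and follows essentially the same route as the paper: recast the nonlinear equation as a linear elliptic problem with an $L^2$ right-hand side, verify square-integrability of the nonlinear term via the Sobolev embedding $H_0^1\hookrightarrow L^{2p}$ guaranteed by $(d,p)\in\mathcal{A}$, invoke standard $H^2$ elliptic regularity using $a\in C^1$ and $\partial\Omega\in C^2$, and conclude continuity from the embedding $H^2\hookrightarrow C(\overline{\Omega})$ for $d\leq 3$. The only cosmetic difference is that you move $bu$ to the right-hand side while the paper retains it in the operator; both are valid since $b\in L^\infty$.
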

\bigskip 

\begin{proof}
 Observe that $u$ is the solution to the following Dirichlet boundary inhomogeneous elliptic problem:
 \begin{align}
 -\nabla\cdot(a\nabla v) +bv=\lambda u-\eta|u|^{p-1} u.
 \end{align}
Then note that, because $u\in H_0^1\subset L^2$ and $H_0^1\hookrightarrow L^{2p}$, we have   $\lambda u -\eta|u|^{p-1} u\in L^2$  for any $(d,p)\in \mathcal{A}$. Thus, by the elliptic regularity (see \cite{evans}, p.336), $u\in H^2$ because $a\in C^1$ and $\Omega$ has $C^2$ boundary. Then, by Sobolev embedding theorem, $u$ is H\"older continuous with some exponent depending on $d$. Thus, $u$ is continuous.
\end{proof}
\bigskip

\begin{lemma}
\label{sp} Suppose that $(d,p)\in\mathcal{A}$ and $u$ is the unique nonnegative real solution to \eqref{semi1}. Then $u$ is bounded and strictly positive.
\end{lemma}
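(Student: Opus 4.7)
The plan is to handle boundedness using the regularity already established in Lemma \ref{regul}, and then to obtain strict positivity by absorbing the nonlinearity into a bounded zero-order coefficient and invoking the strong maximum principle / Harnack inequality for linear uniformly elliptic operators.

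For boundedness, Lemma \ref{regul} gives $u\in H^2(\Omega)$ together with continuity of $u$. Since $\Omega\subset\mathbb{R}^d$ with $d\in\{1,2,3\}$ is a bounded domain with $C^2$ boundary, $\overline{\Omega}$ is compact, so any continuous function on $\overline{\Omega}$ is bounded; equivalently, the Sobolev embedding $H^2(\Omega)\hookrightarrow L^\infty(\Omega)$ is available in these dimensions. Thus $u\in L^\infty(\Omega)$.

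For strict positivity, the key observation is that once $u$ is known to be bounded, the nonlinear term becomes a bounded multiplicative coefficient. Define
\begin{equation*}
V(x):=b(x)+\eta\, u(x)^{p-1}-\lambda,
\end{equation*}
which lies in $L^\infty(\Omega)$ because $b\in L^\infty$ and $u\in L^\infty$. Then equation \eqref{semi1} may be rewritten as the linear elliptic equation
\begin{equation*}
-\nabla\!\cdot(a\nabla u)+V(x)\,u=0\quad\text{in }\Omega,\qquad u=0\text{ on }\partial\Omega,
\end{equation*}
with $a\in C^1$ satisfying $a\ge a_{min}>0$ (uniform ellipticity) and $V\in L^\infty$. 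The normalization $\|u\|_{L^2}=1$ forces $u\not\equiv 0$. I will then apply the weak Harnack inequality for nonnegative supersolutions of uniformly elliptic equations with bounded coefficients (e.g.\ Gilbarg--Trudinger, Theorem 8.20): for every connected compact $K\Subset\Omega$ there exists $C_K>0$ with $\sup_K u\le C_K\inf_K u$. Consequently, if $u(x_0)=0$ for some $x_0\in\Omega$, then $u\equiv 0$ on the connected component of $\Omega$ containing $x_0$, and hence on all of $\Omega$ by connectedness of $\Omega$, contradicting $\|u\|_{L^2}=1$. Therefore $u>0$ everywhere in $\Omega$.

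The only subtle point is that $V$ has no prescribed sign, so a direct application of the classical strong maximum principle is not possible; however the Harnack inequality cited above is valid for $V\in L^\infty$ without any sign restriction (a standard device is to add a large constant $K\gg \|V\|_{L^\infty}$ to both sides, making the zero-order coefficient nonnegative, and then apply the usual maximum principle to $\tilde L u+(V+K)u=Ku$ with $Ku\ge 0$). This is where the argument relies on $u$ being genuinely bounded, which is exactly what the first step supplied.
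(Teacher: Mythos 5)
Your proof is correct. The strict positivity half is essentially the paper's own argument: both of you absorb the nonlinearity into a bounded zero-order coefficient $V=b+\eta u^{p-1}-\lambda\in L^\infty$, view $u$ as a nonnegative, not identically zero (by $\|u\|_{L^2}=1$) solution of the resulting linear uniformly elliptic equation, and conclude $u>0$ from Harnack's inequality (Gilbarg--Trudinger, Section 8.8); your extra remark that no sign condition on $V$ is needed is a fair clarification. Where you diverge is the boundedness step. You get it ``for free'' from Lemma \ref{regul}: since $u\in H^2(\Omega)$ and $d\le 3$ with $\partial\Omega\in C^2$, the embedding $H^2(\Omega)\hookrightarrow C^{0,\alpha}(\overline{\Omega})\hookrightarrow L^\infty(\Omega)$ applies, and indeed the continuity already asserted in Lemma \ref{regul} makes boundedness on the compact $\overline{\Omega}$ immediate. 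The paper instead runs a maximum-principle argument on the open set $\Lambda=\{\lambda-\eta|u|^{p-1}<0\}$ to show $\Lambda=\emptyset$, which yields the explicit quantitative bound $u\le(\lambda/\eta)^{1/(p-1)}$ rather than mere qualitative boundedness. Your route is shorter and dimension-dependent (it would fail for $d\ge 4$, but the paper restricts to $d\le 3$ anyway); the paper's route buys an explicit $L^\infty$ bound in terms of $\lambda$ and $\eta$. For the lemma as stated, both suffice.
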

\bigskip 

\begin{proof}  Note that the $u$ is the solution to 
\begin{align}
\label{lambdaeq}
-\nabla\cdot(a\nabla) v +bv=(\lambda-\eta|u|^{p-1})u ,
\end{align}
With Dirichlet boundary condition. Let $\Lambda =\{ x\in \Omega : \lambda-\eta |u|^{p-1}< 0  \}$ and note that it is open. And observe that $u$ satisfies \eqref{lambdaeq} with positive right hand side on $\Lambda$ with the boundary condition $u=\left({\frac{\lambda}{\eta}}\right)^{\frac{1}{p-1}}$ on $\partial \Lambda$. Since $u\in H^2$ by {Lemma} \ref{regul}, applying the maximum principle(see Theorem 8.19 in \cite{trudinger}), we have $u=\left( \frac{\lambda}{\eta}\right)^{\frac{1}{p-1}}$ on $\Lambda$. This implies that $\Lambda$ is empty and so $u\leq \left( \frac{\lambda}{\eta}\right)^{\frac{1}{p-1}}$ on $\Omega$ which proves the boundedness. Now, note that $u$ is the solution to 
\begin{align}
-\nabla\cdot (a\nabla) v +(b+\eta |u|^2-\lambda)v=0,  
\end{align}
With Dirichlet boundary condition. Then, since $(b+\eta|u|^{p-1}-\lambda)\in L^\infty$, the strict positivity is clear form Harnack's inequality.(see section 8.8 of \cite{trudinger})
\end{proof}
\bigskip

 We conclude this appendix by presenting the proof of Theorem \ref{exiuni} combining the lemmata and a corollary above. First of all, the existence of a solution unique up to complex modulus is a consequence of Lemma \ref{M} and \ref{sol} where Lemma \ref{SC} is used to prove Lemma \ref{M}. Corollary \ref{absol} shows that we have unique non-negative real-valued solution.  Lastly, due to Lemma \ref{sp}, we observe that the non-negative real-valued solution is indeed strictly positive which proves the last statement of Theorem \ref{exiuni}.

\section{Proofs of Lemma \ref{gtrunc} and Lemma \ref{gen}}
\label{appendixb}
\begin{proof}\textit{(Proof of Lemma \ref{gtrunc})}
Throughout this proof, we set $C', C>0$ as varying constants that do not depend on $s$. First, for given integer $s>0$, let us denote $\mathcal{F}_s:=\{0\neq \bm{\nu}\in \mathcal{F}: \nu_j=0\text{ for all }j=1,2,\dots, s\}$ and $\mathcal{F}_{\ell,s}:=\{\bm{\nu}\in \mathcal{F}_s : |\bm{\nu}|=\ell \}$. Then, note that, by $k$th order Taylor series expansion of $f$ in variable $\bm{y}_{ \{ j>s \} }$ is
\begin{align}
\label{taylor}
&f(\bm{y})-f(\bm{y}_s;\bm{0})\notag\\
&=\sum_{\ell=1}^{k}\sum_{\bm{\nu}\in \mathcal{F}_{\ell,s}}\frac{\bm{y}^{\bm{\nu}}}{\bm{\nu}!} \partial^{\bm{\nu}}f(\bm{y}_{s};\bm{0})+\sum_{\bm{\nu}\in \mathcal{F}_{k+1,s}} \frac{k+1}{\bm{\nu}!}\bm{y}^{\bm{\nu}} \int_{0}^{1} (1-t)^k \partial^{\bm{\nu}}f(\bm{y}_s;t\bm{y}_{\{j>s\}}) dt.
\end{align}
where $k$ is such that $C_{k+1}<\infty$. Observe that each term in \eqref{taylor} can be estimated as followings:
\begin{align}
\label{secondext}
&\left\|\sum_{\bm{\nu}\in \mathcal{F}_{k+1,s}} \frac{k+1}{\bm{\nu}!}\bm{y}^{\bm{\nu}} \int_{0}^{1} (1-t)^k \partial^{\bm{\nu}}f(\bm{y}_s;t\bm{y}_{\{j>s\}}) dt\right\|_X\notag\\
&
\leq
 \sum_{\bm{\nu}\in \mathcal{F}_{k+1,s}} \frac{g(\bm{\nu})\bm{\beta}^{\bm{\nu}}}{2^{k+1} \bm{\nu}!}
 \leq
  C_{k+1}
  \sum_{\bm{\nu}\in \mathcal{F}_{k+1,s}} \bm{\beta}^{\bm{\nu}},
\end{align}
and
\begin{align}
\left\| \sum_{\ell=1}^{k}\sum_{\bm{\nu}\in \mathcal{F}_{\ell,s}}\frac{\bm{y}^{\bm{\nu}}}{\bm{\nu}!} \partial^{\bm{\nu}}f(\bm{y}_{s};\bm{0})\right\|_{X} 
\leq
 \sum_{\ell=1}^{k}\sum_{\bm{\nu}\in \mathcal{F}_{\ell,s}}
 \frac{g(\bm{\nu})}{2^\ell \bm{\nu}!} \bm{\beta}^{\bm{\nu}}
 \leq
  C_k\sum_{\ell=1}^{k}\sum_{\bm{\nu}\in \mathcal{F}_{\ell,s}}
  \bm{\beta}^{\bm{\nu}}.
\end{align}
By combining the two results, we obtain
\begin{align}
\label{esti1}
\|f(\bm{y})-f_s(\bm{y}_s)\|_X \leq C\left(\sum_{\bm{\nu}\in \mathcal{F}_{k+1,s}} \bm{\beta}^{\bm{\nu}}+ \sum_{\ell=1}^{k}\sum_{\bm{\nu}\in \mathcal{F}_{\ell,s}}
  \bm{\beta}^{\bm{\nu}} \right). 
\end{align}
Observe that, setting $S_1^k(\beta_j):=\sum_{\ell=1}^{k}\beta_j^\ell$, for all $j>s$,
\begin{align}
\label{S}
\sum_{\ell=1}^{k}\sum_{\bm{\nu}\in \mathcal{F}_{\ell,s}}\bm{\beta}^{\bm{\nu}}
 &\leq \prod_{j>s}\left( 1+ S_1^k(\beta_j)  \right)-1
=\exp\left(\sum_{j>s}\log\left( 1+ S_1^k(\beta_j)\right)  \right)-1\notag\\
&\leq \exp\left(\sum_{j>s}  S_1^k(\beta_j)  \right)-1
\leq   \left( \sum_{j>s} S_1^k(\beta_j) \right)
\exp\left( \sum_{j>s}S_1^k(\beta_j)\right),
\end{align}
where, in the first line, we used our observation that each term on the leftmost side is a term on its right-hand side after the expansion; in the second line, we used the fundamental inequality that $\ln{x}\leq x$ for all $x>0$ with $e^x-1\leq xe^x$. By setting $s>0$ large enough so that $\beta_j<1$ for all $j>s$, using the geometric sum and using $\beta_j$ is decreasing, it is easy to see that $\left(S_1^k(\beta_j)\right)_{j=1}^\infty\in \ell^q$. Indeed, note that
\begin{align}
\label{SS}
S_1^k(\beta_j)=\beta_j\frac{1-\beta_j^k}{1-\beta_j}\leq \frac{\beta_j}{1-\beta_{s+1}}.
\end{align}
Now, use the inequality from Theorem 5.1 of \cite{sch1}, which is
\begin{align}
\label{scheq}
\sum_{j>s} \beta_j\leq \min\left( \frac{r}{1-r},1  \right) \|\bm{\beta}\|_{\ell^r} s^{1-\frac{1}{r}},
\end{align}
for any $\bm{\beta}\in \ell^r$ with  $r\in (0,1)$. Then we obtain 
\begin{align}
\label{second}
\sum_{\ell=1}^{k}\sum_{\bm{\nu}\in \mathcal{F}_{\ell,s}}\bm{\beta}^{\bm{\nu}} \leq C s^{1-\frac{1}{q}}.
\end{align}
 Using \eqref{scheq}, the first term of \eqref{esti1} can be estimated by
\begin{align}
\label{first}
\sum_{\bm{\nu}\in \mathcal{F}_{k+1,s}} \bm{\beta}^{\bm{\nu}}\leq \sum_{\bm{\nu}\in \mathcal{F}_{k+1,s}} \frac{(k+1)!}{\bm{\nu}!}\bm{\beta}^{\bm{\nu}}=\left(\sum_{j>s}\beta_j \right)^{k+1}\leq C s^{(k+1)\left(1-\frac{1}{q} \right)}.
\end{align}
Note that, for any positive integer $k$, it is bounded by $Cs^{1-\frac{1}{q}}$. Thus, by combining \eqref{first} and \eqref{second}, we obtain
\begin{align}
\|f(\bm{y})-f_s(\bm{y}_s)\|_X \leq C s^{1-\frac{1}{q}}.
\end{align}
  Now, letting $\mathcal{G}\in X^*$ be given, we want to estimate a weak truncation error. By the linearity of $\mathcal{G}$, using \eqref{taylor}, observe that
 \begin{align}
 \label{weakrep} 
 \mathbb{E}_{\bm{y}}\left[\mathcal{G}(f(\bm{y}))-\mathcal{G}(f_s(\bm{y}_s))\right]
 &=\sum_{\ell=1}^{k}\sum_{\bm{\nu}\in \mathcal{F}_{\ell,s}}\mathbb{E}_{\bm{y}}\left[ \frac{\bm{y}^{\bm{\nu}}}{\bm{\nu}!} \mathcal{G}(\partial^{\bm{\nu}}f_s(\bm{y}_{s}))\right]
 \notag\\
 &+
 \sum_{\bm{\nu}\in \mathcal{F}_{k+1,s}} \frac{k+1}{\bm{\nu}!}\mathbb{E}_{\bm{y}}\left[\bm{y}^{\bm{\nu}} \int_{0}^{1} (1-t)^k \mathcal{G}(\partial^{\bm{\nu}}f(\bm{y}_s;t\bm{y}_{\{j>s\}})) dt\right].
 \end{align}
We can upper bound the first term in \eqref{weakrep} by
\begin{align}
\label{weakfirst}
\left| \sum_{\ell=1}^{k}\sum_{\substack{\bm{\nu}\in \mathcal{F}_{\ell,s}}}\mathbb{E}_{\bm{y}}\left[ \frac{\bm{y}^{\bm{\nu}}}{\bm{\nu}!} \mathcal{G}(\partial^{\bm{\nu}}f_s(\bm{y}_{s}))\right]\right|
&\leq 
 \sum_{\ell=1}^{k}\sum_{\substack{\bm{\nu}\in \mathcal{F}_{\ell,s}}}\frac{1}{\bm{\nu}!}\left| \mathbb{E}_{\bm{y}}\left[\bm{y}^{\bm{\nu}} \right]\right| \left| \mathbb{E}_{\bm{y}}\left[\mathcal{G}( \partial^{\bm{\nu}}f_s(\bm{y}_{s}))  \right]\right|\notag\\
 &\leq 
 \sum_{\ell=1}^{k}\sum_{\substack{\bm{\nu}\in \mathcal{F}_{\ell,s}\\ \nu_j\neq 1 \text{ for all }j}}\| \mathcal{G}\|_{X^*}\frac{g(\bm{\nu})\bm{\beta}^{\bm{\nu}}}{2^\ell\bm{\nu}!}\notag \\
&\leq  C_k \| \mathcal{G}\|_{X^*} \sum_{\ell=1}^{k}\sum_{\substack{\bm{\nu}\in \mathcal{F}_{\ell,s}\\ \nu_j\neq 1 \text{ for all }j}}\bm{\beta}^{\bm{\nu}}\notag \\
&\leq C'\sum_{\ell=1}^{k}\sum_{\substack{\bm{\nu}\in \mathcal{F}_{\ell,s}\\ \nu_j\neq 1 \text{ for all }j}}\bm{\beta}^{\bm{\nu}},
\end{align}
where the first inequality is by triangular inequality, the second inequality is because $\mathcal{G}$ is bounded and $\mathbb{E}_{\bm{y}}[y_j]=0$ for all $j$. Now, note that 
\begin{align}
\label{S2}
S_2^k(\beta_j):=\sum_{\ell=2}^k \beta_j^\ell=\beta_j^2 \frac{1-\beta_j^{k-1}}{1-\beta_j}\leq \frac{\beta_j^2}{1-\beta_{s+1}},
\end{align}
so we have $(S_2^k(\beta_j))_{j\in \mathbb{N}}\in \ell^{\frac{q}{2}} $. Combining this with \eqref{weakfirst}, the argument in \eqref{S} replacing $S_1^k(\beta_j)$ by $S_2^k(\beta_j)$ and \eqref{scheq}, we obtain
\begin{align}
\label{weak1}
\left| \sum_{\ell=1}^{k}\sum_{\substack{\bm{\nu}\in \mathcal{F}_{\ell,s}}}\mathbb{E}_{\bm{y}}\left[ \frac{\bm{y}^{\bm{\nu}}}{\bm{\nu}!} \mathcal{G}(\partial^{\bm{\nu}}f_s(\bm{y}_{s}))\right]\right|\leq C' s^{1-\frac{2}{q}}.
\end{align}
Using similar argument with  \eqref{secondext} combined with \eqref{first}, we have
 \begin{align}
\label{weak2}
\left|  \sum_{\bm{\nu}\in \mathcal{F}_{k+1,s}} \frac{k+1}{\bm{\nu}!}\mathbb{E}_{\bm{y}}\left[\bm{y}^{\bm{\nu}} \int_{0}^{1} (1-t)^k \mathcal{G}(\partial^{\bm{\nu}}f(\bm{y}_s;t\bm{y}_{\{j>s\}})) dt\right]\right|
&\leq C' s^{(k+1)\left(1-\frac{1}{q} \right)},
\end{align}
where $C'$ absorbed $\|\mathcal{G}\|_{X^*}$ this time. Setting our $k$ to be large enough so that $(k+1)\left(1-\frac{1}{q} \right)\leq 1-\frac{2}{q}$ and combining \eqref{weak1} and \eqref{weak2}, 
\begin{align}
\mathbb{E}_{\bm{y}}\left[\mathcal{G}(f(\bm{y}))-\mathcal{G}(f_s(\bm{y}_s))\right] \leq C' s^{1-\frac{2}{q}}.
\end{align} 
As for the case when $q=1$, it is clear that
\begin{align}
\|f(\bm{y})-f_s(\bm{y}_s)\|_X\leq  C \sum_{j>s} \beta_j
\end{align}
from \eqref{esti1}, \eqref{S}, \eqref{SS} and \eqref{first}. And it is also clear that
\begin{align}
\left| 
\mathbb{E}_{\bm{y}}
\left[
\mathcal{G}(f)-\mathcal{G}(f_s)
\right]
\right|
\leq 
C\left( \sum_{j>s}\beta_j \right)^2,
\end{align}
by setting $k=2$ in \eqref{first} and by  noting from \eqref{S2},
\begin{align}
\sum_{\ell=1}^{k}\sum_{\substack{\bm{\nu}\in \mathcal{F}_{\ell,s}\\ \nu_j\neq 1 \text{ for all }j}}\bm{\beta}^{\bm{\nu}}\leq C' \sum_{j>s} \beta_j^2 \leq C'\left(
\sum_{j>s} \beta_j
 \right)^2.
\end{align}
with varying constant $C'>0$.
\end{proof}
\bigskip

\begin{proof}\textit{(Proof of Lemma \ref{gen})}
Let $\bm{z}$ be constructed as in Theorem 5.10 in \cite{dick}. Then, for any $\theta\in \left(\frac{1}{2},1 \right] $, we have
\begin{align}
\sqrt{\mathbb{E}_{\bm{\Delta}}\left[ \left| \mathbb{E}_{\bm{y}}\left[\mathcal{G}\left(f\right) \right]-Q_{N,s}\mathcal{G}(f)   \right|^2 \right]}
\leq
\left( \frac{1}{\varphi(N)}\sum_{\emptyset \neq \bm{u} \subset\{1:s\}} \gamma_{\bm{u}}^\theta \left(\frac{2\zeta(2\theta)}{(2\pi^2)^\theta }\right)^{|\bm{u}|}  \right)^{\frac{1}{2\theta}}\|\mathcal{G}(f)\|_{s,\gamma}.
\end{align}
The 'unanchored' norm $\|\cdot\|_{s,\gamma}$ is defined in section 4.3 in \cite{dick} as follows:
\begin{align}
\label{qmc1}
\|\mathcal{G}(f)\|_{s,\gamma}^2=\sum_{\bm{u}\subset \{1:s\}}\frac{1}{\gamma_{\bm{u}}} \int _{\left[-\frac{1}{2},\frac{1}{2} \right]^{|\bm{u}|}} \left(
\int_{\left[-\frac{1}{2},\frac{1}{2}  \right]^{s-|\bm{u}|}} \frac{\partial^{|\bm{u}|}}{\partial \bm{y}_{\bm{u}}}\mathcal{G}(f)(\bm{y}) d\bm{y}_{-\bm{u}}
 \right)^2 d\bm{y}_{\bm{u}},
\end{align}
where $\{1:s\}=\{1,2,\dots, s\}$, $-\bm{u}:=\{1:s\}\setminus \bm{u}$, $\bm{y}_{\bm{u}}=(y_j)_{j\in \bm{u}}$ and $\frac{\partial^{|\bm{u}|}}{\partial \bm{y}_{\bm{u}}}:=\prod_{j\in \bm{u}}\frac{\partial}{\partial y_j}$.
\end{proof}
\bigskip

Due to the assumption, we have
\begin{align}
\label{qmc2}
\left| \frac{\partial^{|\bm{u}|}}{\partial \bm{y}_{\bm{u}}} \mathcal{G}(f)(\bm{y}) \right|\leq C\|\mathcal{G}\|_{X^*} (|\bm{u}|!) \bm{\beta}^{\bm{u}}.
\end{align}
Combining \eqref{qmc1} and \eqref{qmc2}, for any $\theta\in \left(\frac{1}{2},1 \right]$, we have
\begin{align}
\label{qmc3}
\sqrt{\mathbb{E}_{\bm{\Delta}}\left[ \left| \mathbb{E}_{\bm{y}}\left[\mathcal{G}\left(f\right) \right]-Q_{N,s}\mathcal{G}(f)   \right|^2 \right]}
\leq
C\|\mathcal{G}\|_{X^*}
C_{\theta,s}\varphi(N)^{-\frac{1}{2\theta}},
\end{align}
where
\begin{align}
C_{\theta,s}:=\left(\sum_{\emptyset \neq \bm{u} \subset\{1:s\}} \gamma_{\bm{u}}^\theta \left(\rho(\theta)\right)^{|\bm{u}|}  \right)^{\frac{1}{2\theta}}\left(\sum_{\bm{u}\subset \{1:s\}} \frac{1}{\gamma_{\bm{u}}} (|\bm{u}|!)^2 \bm{\beta}^{2\bm{u}}  \right)^{\frac{1}{2}},
\end{align}
and
\begin{align}
\rho(\theta):=\left(\frac{2\zeta(2\theta)}{(2\pi^2)^\theta }\right).
\end{align}
Here, $\zeta$ is the Riemann zeta function, and $\varphi$ is the Euler phi function. From \eqref{qmc3}, we can see that if we can find an upper bound for $C_{\theta,s}$ independent on $s$, we can obtain the desired result.  
For this, choose 
\begin{align}
\gamma_{\bm{u}}= \left(\frac{(|\bm{u}|!)^2\bm{\beta}^{2\bm{u}}}{\rho(\theta)^{|\bm{u}|}} \right)^{\frac{1}{1+\theta}},
\end{align}
then we have
\begin{align}
\label{qmcc}
C_{\theta,s}\leq \left( \sum_{\bm{u}\subset \{1:s\}}  (|\bm{u}|!)^{\frac{2\theta}{1+\theta}} \prod_{j\in \bm{u}} \left(\beta_j^{{2\theta}}{\rho(\theta)}\right)^{\frac{1}{1+\theta}}\right)^{\frac{\theta+1}{2\theta}}.
\end{align}
Then it remains to find an upper bound for the sum inside the parenthesis so that the bound is independent of $s$. Observe that
\begin{align}
\label{qmcbound}
\sum_{\bm{u}\subset \{1:s\}}  (|\bm{u}|!)^{\frac{2\theta}{1+\theta}} \prod_{j\in \bm{u}} \left(\beta_j^{{2\theta}}{\rho(\theta)}\right)^{\frac{1}{1+\theta}}
&=
\sum_{\ell=1}^s (|\ell|!)^{\frac{2\theta}{1+\theta}}
\sum_{\substack{\bm{u}\subset \{1:s\}\\|\bm{u}|=\ell}}
\prod_{j\in \bm{u}} \left(\beta_j^{{2\theta}}{\rho(\theta)}\right)^{\frac{1}{1+\theta}}\notag\\
&\leq 
\sum_{\ell=1}^s (|\ell|!)^{\frac{2\theta}{1+\theta}-1}
\left( \sum_{j\geq 1}\left(\beta_j^{{2\theta}}{\rho(\theta)}\right)^{\frac{1}{1+\theta}} \right)^{\ell}.
\end{align}
In order to upper bound the quantity above, we need
\begin{align}
\label{boundfirst}
 \rho(\theta)^{\frac{1}{1+\theta}}\sum_{j\geq 1} \beta_j^{\frac{2\theta}{1+\theta}}=\sum_{j\geq 1}\left(\beta_j^{{2\theta}}{\rho(\theta)}\right)^{\frac{1}{1+\theta}}<\infty,
\end{align}
and
\begin{align}
\label{boundsecond}
\frac{2\theta}{1+\theta}<1\iff \theta<1.
\end{align}
The quantity in \eqref{boundfirst} can be bounded if and only if $\frac{2\theta}{1+\theta}\geq q$ due to the embedding $\ell^r\hookrightarrow \ell^t$ if $r\leq t$ and because $\bm{\beta}\in \ell^q$. This implies that $\theta\geq \frac{q}{2-q}$ and so
\begin{align}
\frac{q}{2-q}\leq \theta <1.
\end{align}
If $q\in \left(0,\frac{2}{3} \right]$, then $\frac{q}{2-q}\leq \frac{1}{2}$ and it implies that $\theta$ could be any number in $\left(\frac{1}{2},1 \right)$ and so $\frac{1}{2\theta}$ could be any number in $\left(\frac{1}{2},1 \right)$. Thus, the error rate is $C \varphi(N)^{-(1-\delta)}$ for arbitrary $\delta\in  \left(0,\frac{1}{2}\right)$. In the case when $q\in \left( \frac{2}{3},1 \right)$, $\theta$ could be any number in $\left[ \frac{q}{2-q},1 \right)$ and so $\frac{1}{2\theta}$ is any number in $\left( \frac{1}{2},\frac{2-q}{2q} \right]$ and the best possible rate is $C\varphi(N)^{-\left( \frac{1}{q}-\frac{1}{2} \right)}$. When $q=1$, let us set $\theta=1$ then observe that
\begin{align}
C_{1,s}\leq \sum_{\bm{u}\subset\{1:s\}} (|\bm{u}|!) \prod_{j\in \bm{u}} \beta_j \rho(1)^{\frac{1}{2}}\leq  \left(\frac{1}{1-\sum_{j\geq 1} \beta_j\rho(1)^{\frac{1}{2}} } \right),
\end{align}
where the second inequality is from Lemma 6.3 in \cite{sch1} provided that $\sum_{j\geq 1} \beta_j <\rho(1)^{-\frac{1}{2}}=\sqrt{6}$.  Lastly, we obtain the desired result for any prime number $N$, noting that $\varphi(N)=N-1$.
\\

\section{An example of the representation \eqref{gprep}}
\label{appendixc}
\\

{
Let us consider the case when $\bm{\nu}=(1,2)=(1,2,0,0,0,\dots)$ with $p=4$ and an analytic function $u$. Observe that, by the usual product rule, the left hand side of \eqref{gprep} is
\begin{align}
\partial^{\bm{\nu}}(u^p) 
&=
 \partial^{(1,2)}(u^4)
\notag\\
& =
 \partial^{(0,2)}(4u^3 
 (
 \partial^{(1,0)}u))
\notag\\
& =
 \partial^{(0,1)}
(12u^2(\partial^{(1,0)}u)(\partial^{(0,1)}u)
+
4u^3 (\partial^{(1,1)}u)
)
\notag\\
&=
24u(\partial^{(0,1)}u)
(\partial^{(1,0)}u)
(\partial^{(0,1)}u)
+
12u^2 
(\partial^{(1,1)}u)
( \partial^{(0,1)}u)
+
12u^2 
(\partial^{(1,0)}u)
( \partial^{(0,2)}u)
\notag\\
&+
12 u^2
(\partial^{(0,1)}u)
( \partial^{(1,1)}u)
+
4u^3 
(\partial^{(1,2)}u)
\notag\\
&=
4u^3 (\partial^{(1,2)}u)
+
12u^2 
(\partial^{(1,0)}u)
( \partial^{(0,2)}u)
+
24 u^2
(\partial^{(0,1)}u)
( \partial^{(1,1)}u)
\notag\\
&+
24u
(\partial^{(0,1)}u)
(\partial^{(1,0)}u)
(\partial^{(0,1)}u).
\end{align}
Now, let us compute the right hand side of \eqref{last} (or equivalently \eqref{gprep}). First, note that $A_{4-0}'$ is empty because there is no $(\bm{m}_i)_{i=1}^{4}$ satisfying $\sum_{i=1}^{4}\bm{m}_i=\bm{\nu}=(1,2)$ and $0<\bm{m}_i<\bm{\nu}$ for all $i=1,2,3,4.$ Next, $A_{4-1}'$ has $3=\frac{3!}{2!1!}$ elements. It is because only possible partition of $(1,2)$ into three components under the two restriction is $(1,0),(0,1)$ and $(0,1)$ and all the possible distinct sequences are $((0,1),(1,0),(0,1))$, $((1,0),(0,1),(0,1))$, $((0,1),(0,1),(1,0))$. Therefore, the coefficient of $u(\partial^{(0,1)}u)(\partial^{(0,1)}u) (\partial^{(1,0)}u)$ is 
\begin{align*}
\frac{4!}{3!1!} \frac{3!}{1!2!} 
\begin{pmatrix}
&(1,2)\\
(1,0)&(0,1)&(0,1)
\end{pmatrix}
=
12\frac{1!}{1!0!0!}\frac{2!}{0!1!1!}=24,
\end{align*}
which is consistent with the left hand side.}\\

{Now, note that $A_{4-2}'$ has two possible partitions of $(1,2)$ in to two components, namely $\{(1,0),(0,2)\}$ and $\{(0,1),(1,1)\}$. And each partition makes two different sequences, $((1,0)(0,2))$ and $((0,2),(1,0))$ for the first partition, and $((0,1)(1,1))$ and $((1,1)(0,1))$ for the second partition. Then the coefficient of $u^2(\partial^{(1,1)}u)(\partial^{(0,1)}u)$ is
\begin{align*}
\frac{4!}{2!2!} \frac{2!}{1!1!} 
\begin{pmatrix}
&(1,2)\\
(1,1)&&(0,1)
\end{pmatrix}
=
12 \frac{1!}{1!0!}\frac{2!}{1!1!}=24
\end{align*}
and, similarly, the coefficient of $u^2 (\partial^{(1,0)}u)
(\partial^{(0,2)}u)$ is
\begin{align*}
\frac{4!}{2!2!}\frac{2!}{1!1!} 
\begin{pmatrix}
&(1,2)\\
(1,0)&&(0,2)
\end{pmatrix}
=12\frac{1!}{1!0!}\frac{2!}{0!2!}=12
\end{align*}
which are consistent with the left hand side again. Lastly, coefficient for $u^3\partial^{(1,2)}u$ is clear from the formula \eqref{gprep}. Therefore, our representation \eqref{gprep} is consistent.}
\\

          \end{document}